\newlength{\bibitemsep}\setlength{\bibitemsep}{.2\baselineskip plus .05\baselineskip minus .05\baselineskip}
\newlength{\bibparskip}\setlength{\bibparskip}{0pt}
\let\oldthebibliography\thebibliography
\renewcommand\thebibliography[1]{%
  \oldthebibliography{#1}%
  \setlength{\parskip}{\bibitemsep}%
  \setlength{\itemsep}{\bibparskip}%
}
\theoremstyle{definition}
\newcommand{\lam}{\lambda}
\newtheorem{thm}[equation]{Theorem}
\newtheorem{prop}[equation]{Proposition}
\newtheorem{lem}[equation]{Lemma}
\numberwithin{equation}{section}
\newcommand{\bpm}{\begin{pmatrix}}
\newcommand{\epm}{\end{pmatrix}}
\newcommand{\bsm}{\begin{smallmatrix}}
\newcommand{\esm}{\end{smallmatrix}}
\newcommand{\bspm}{\left(\begin{smallmatrix}}
\newcommand{\espm}{\end{smallmatrix}\right)}
\newcommand{\bm}{\begin{matrix}}
\renewcommand{\em}{\end{matrix}}
\newcommand{\bbm}{\begin{bmatrix}}
\newcommand{\ebm}{\end{bmatrix}}
\newcommand{\bs}{\backslash}
\newcommand{\C}{\mathbb{C}}
\newcommand{\G}{\mathbb{G}}
\newcommand{\A}{\mathbb{A}}
\newcommand{\Q}{\mathbb{Q}}
\newcommand{\Z}{\mathbb{Z}}
\newcommand{\R}{\mathbb{R}}
\newcommand{\Ind}{\operatorname{Ind}}
\newcommand{\la}{\langle}
\newcommand{\ra}{\rangle}
\newcommand{\on}{\operatorname}
\renewcommand{\Re}{\on{Re}}
\newcommand{\gm}{\gamma}
\newcommand{\f}{\mathfrak}
\renewcommand\({\left(}
\renewcommand\){\right)}
\newcommand\srel[2]{\begin{smallmatrix} {#1} \\ {#2} \end{smallmatrix}}
\newcommand{\ttwo}[4]{\(\begin{smallmatrix}{#1} & {#2}
\\ {#3} & {#4} \end{smallmatrix}\)}
\begin{document}

\title{On Arthur's unitarity conjecture for split real groups}

\author{Joseph Hundley\thanks{Supported by National Security Agency grants H98230-15-1-0234 and
H98230-16-1-0125.}~~~({\tt jahundle@buffalo.edu})\\
Stephen D. Miller\thanks{Supported by National Science Foundation grants DMS-1801417 and CNS-1815562.}~~~({\tt{miller@math.rutgers.edu}})}

\maketitle

\vspace{-.6cm}

\begin{abstract}
Arthur's conjectures predict the existence of some very interesting unitary representations occurring in spaces of automorphic forms.  We prove the unitarity of the ``Langlands  element'' (i.e., the representation specified by Arthur) of all unipotent Arthur packets for split real exceptional groups.   The proof  uses Eisenstein series, Langlands' constant term formula and square integrability criterion, analytic properties of intertwining operators, and some mild arithmetic input from the theory of Dirichlet $L$-functions, to reduce to a more combinatorial problem about intertwining operators.  \textcolor{blue}{This updated arXiv posting also includes some comments (in blue) concerning statements about normalized intertwining operators we quoted from the literature in Section~\ref{sec:intertwine}.}
\end{abstract}

\section{Introduction}

In \cite{Arthur-conjectures1,Arthur-conjectures2} James Arthur introduced a series of conjectures about the discrete automorphic spectrum.  Motivated by his work on the trace formula, they predict several properties of   packets of so-called ``unipotent'' representations, all of which are conjectured to occur unitarily in spaces of automorphic forms on the adelic points of some  connected reductive linear algebraic group $G$.  Arthur's conjectures for many classical groups were solved in \cite{Arthur-book} using the deep input of Ngo's proof of the Fundamental Lemma \cite{Ngo}.

Though Arthur's conjectures are global in nature, they have deep local implications to  representation theory.  In the particular setting of real reductive groups, the  unitary representations they predict have often been difficult to understand.  The book \cite{ABV} provides a  detailed study of the properties of Arthur's unipotent representations for the real points $G(\R)$ of $G$, and proves all of their major expected properties -- except for unitarity. The unitarity is most difficult for split $G$, where it has been solved for spherical representations in \cite{M} using Eisenstein series (completing earlier work of \cite{Kim-G2,Moeglin,GMV}).

This paper studies the non-spherical representations of split $G(\R)$ using more complicated Eisenstein series.  Arthur's book \cite{Arthur-book} proves his conjectures  for the classical groups $SO(n,n)$, $SO(n,n+1)$, and split $Sp(2n)$ (but not for  Spin covers or  the half-spin group $HSpin(2n)$), while Henry Kim earlier used Eisenstein series to prove our Theorem~\ref{thm:mainthm} below for $G_2$ (where it also follows from Vogan's classification of the unitary dual \cite{Vogan}).  Though our methods surely apply to classical groups as well, we focus on the remaining exceptional group cases (for which no general pattern exists).

  Arthur's formulation \cite{Arthur-conjectures1,Arthur-conjectures2} of his conjectures does not explicitly define his  packets aside from describing a particularly canonical ``Langlands element'' of each (see (\ref{langlandsparam}) and Theorem~\ref{thm:unitarizablequotient}).  A   description of the  full Arthur packets for real groups was given in  \cite[\S22.6]{ABV} in terms of the Beilinson-Bernstein classification \cite{BB}.  It is this Langlands element which we demonstrate the unitarity of; we do not say anything about  the other elements of the packet, which are difficult to concretely identify.
\begin{thm}\label{thm:mainthm}
Let $G$ be a Chevalley group other than $Spin(n+1,n)$, $Spin(n,n)$, or $HSpin(2n,2n)$ (the ``half-spin'' group double-covered by $Spin(2n,2n)$).  Then the canonical ``Langlands element" of each unipotent Arthur packet of representations of $G(\R)$ is unitarizable.
\end{thm}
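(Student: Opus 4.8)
The plan is to realize each Langlands element $\pi_\psi$ as the archimedean component of an automorphic representation occurring in the \emph{residual} (hence discrete, hence unitary) spectrum of a suitable split $\Q$-group, and to produce that residual representation as an iterated residue of an Eisenstein series. Fix a unipotent Arthur parameter $\psi$ for $G(\R)$; by Theorem~\ref{thm:unitarizablequotient} it suffices to prove that the Langlands quotient $J(\phi_\psi)$ attached to the parameter (\ref{langlandsparam}) is unitarizable. Write $\lambda_\psi=\tfrac12 h^\vee$ for the corresponding infinitesimal character, $h^\vee$ the middle element of the $\f{sl}_2$-triple in $\widehat{\f g}$ carried by $\psi$, and let $\widehat{\c O}_\psi\subset\widehat{\f g}$ be its nilpotent orbit.

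\emph{Step 1: reduction to distinguished orbits.} If $\widehat{\c O}_\psi$ is not distinguished then $\psi$, together with its finite-order twist $\psi|_{W_\R}$, factors through the dual group $\widehat L$ of a proper Levi $L\subset G$ chosen minimal with this property (so that $\widehat{\c O}_\psi$ is distinguished in $\widehat{\f l}$). Since a finite-order parameter contributes no $\f a_L^{*}$-exponent and $h^\vee\in\widehat{\f l}_{\mathrm{der}}$, the parameter $\phi_\psi$ has trivial $\f a_L^{*}$-part, so $J(\phi_\psi)$ is a constituent of the representation \emph{unitarily} induced to $G(\R)$ from the Langlands element of the associated unipotent packet of $L(\R)$. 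Unitary parabolic induction of an irreducible unitary representation is semisimple and unitary, so $J(\phi_\psi)$ is unitarizable once the $L(\R)$-Langlands element is; the latter is provided by Steps~2--3 applied to $(L,\psi_L)$, which is an elliptic unipotent parameter for $L$. (The Levi $L$ may have a classical, even $Spin$-type, factor, but the construction below is insensitive to isogeny type; the exclusions in the theorem concern only the ambient group.) Thus we may assume $\widehat{\c O}_\psi$ is distinguished; the non-spherical cases still arise here, precisely when $\psi|_{W_\R}$ is a nontrivial element of $\operatorname{Cent}(\psi(\mathrm{SL}_2),\widehat G)/Z(\widehat G)$.

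\emph{Step 2: Eisenstein series, constant terms, square-integrability.} For distinguished $\widehat{\c O}_\psi$, choose a standard parabolic $P=MN$ of $G$ and an automorphic representation $\sigma$ of $M(\A)$ --- in the principal case $P=B$ is the Borel and $\sigma$ is a Hecke character of the torus whose archimedean component is the sign character prescribed by $\psi|_{W_\R}$, realized globally by a Dirichlet character of the appropriate parity at the real place --- and form the Eisenstein series $E(g,\sigma,\lambda)$ for $\Ind_{P(\A)}^{G(\A)}(\sigma\otimes|\cdot|^\lambda)$. After the $\rho$-shift, $\lambda_\psi\leftrightarrow\tfrac12 h^\vee$ lies in the closed positive chamber; set $\c E_\psi:=\Res_{\lambda\to\lambda_\psi}E(g,\sigma,\lambda)$, the iterated residue along the affine subspace cut out by the singular hyperplanes of the relevant $L$-functions. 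By Langlands' constant term formula the constant terms of $\c E_\psi$ along all parabolics are governed by the translates $w\lambda_\psi$, by ratios of Dirichlet $L$-functions, and by normalized local intertwining operators; because $\tfrac12 h^\vee$ is sufficiently dominant for a distinguished orbit, all resulting exponents lie in the open cone of Langlands' square-integrability criterion, so $\c E_\psi$, \emph{if nonzero}, is a square-integrable automorphic form. Then every irreducible summand of $\c E_\psi$ is unitary, and comparing its leading constant-term exponent with $\lambda_\psi$ and its ramification data with $\psi|_{W_\R}$ identifies the archimedean component with the Langlands element $\pi_\psi=J(\phi_\psi)$.

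\emph{Step 3: non-vanishing of the residue --- the combinatorial core and the main obstacle.} It remains to prove $\c E_\psi\neq0$. By the constant term formula, up to nonzero constants $\c E_\psi$ equals an iterated residue of a product of Dirichlet $L$-functions times the value at $\lambda_\psi$ of a product of normalized local intertwining operators $N(w_\psi,\lambda)_v$, where $w_\psi$ is the Weyl element turning negative exactly the roots whose $L$-factors are being residued. The arithmetic factor is controlled by \emph{mild} classical input --- the simple pole of $\zeta(s)$ at $s=1$, non-vanishing of $L(s,\chi)$ for $\Re(s)\ge1$, and $L(1,\chi)\neq0$ for $\chi\neq1$ --- which shows it is finite and nonzero and confirms that the order of pole matches the codimension of the residue subspace, the Dirichlet twist also serving to keep the denominator $L$-factors off the pole of $\zeta$ when $\tfrac12 h^\vee$ meets a wall; and the non-archimedean factors $N(w_\psi,\lambda_\psi)_p$ are holomorphic and nonzero by Gindikin--Karpelevich. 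What remains is the local, essentially combinatorial, statement that the normalized \emph{archimedean} intertwining operator $N(w_\psi,\lambda)_\infty$ on $\Ind_{P(\R)}^{G(\R)}(\sigma_\infty\otimes|\cdot|^\lambda)$ is holomorphic and nonzero at $\lambda=\lambda_\psi$ --- equivalently, that the long intertwining operator does not vanish on the line singled out by $\sigma_\infty$. This is the step with no uniform pattern: for each of the finitely many distinguished nilpotent orbits of $G_2,F_4,E_6,E_7,E_8$ and each admissible twist one must pin down $w_\psi$ and the residue subspace and then check that the accompanying product of rank-one archimedean operators and $\Gamma$-factors neither over-cancels (forcing $\c E_\psi=0$) nor leaves a spurious pole (breaking square-integrability). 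I expect this archimedean non-vanishing analysis --- already delicate for $E_7$ and heaviest for $E_8$ --- to be the principal obstacle; the self-duality of the exceptional groups and the Bala--Carter classification of distinguished orbits make it a finite if intricate verification.
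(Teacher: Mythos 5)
Your broad strategy---realize the Langlands element as the archimedean component of a residual Eisenstein representation, use Langlands' $L^2$-criterion, reduce via Theorem~\ref{thm:unitarizablequotient}---is the same as the paper's. But there are two places where your outline would break down as written.

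First, you reduce to the case that the nilpotent orbit $\widehat{\mathcal O}_\psi$ is \emph{distinguished}, claiming that otherwise the whole parameter $\psi$ factors through the dual of a proper Levi. This is false: the correct dichotomy (used in Section~\ref{sec:sub:reductiontodistinguished}) is distinguished \emph{Arthur parameter}, not distinguished \emph{orbit}. If $\widehat{\mathcal O}_\psi$ is not distinguished, $\psi(SL_2)$ lands in a proper Levi, but $\psi(j)=\sigma$ need not, and there are exceptional cases (the $E_6(a_1)$ orbit in $E_7$, and the $D_5+A_2$, $D_7(a_1)$ orbits in $E_8$, listed in Table~\ref{fig:biglistofcases}) where the orbit is non-distinguished yet $\sigma$ acts by inversion on the centralizing torus, so the full image of $\psi$ is \emph{not} contained in any proper Levi. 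Your Step~1 silently drops these from Steps~2--3, and for $D_7(a_1)$ in particular the residual-Eisenstein analysis is genuinely needed (the paper handles that one by the main argument, the other two by {\tt atlas}). Related: your claim that the non-distinguished case follows from unitarity of the Levi's Langlands element by parabolic induction is Arthur's \emph{prediction}, not a theorem; the paper explicitly notes that making it rigorous requires computing composition series, and sidesteps it by direct {\tt atlas} verification.

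Second, in Step~2 you assert that because $\tfrac12 h^\vee$ is ``sufficiently dominant for a distinguished orbit'' all surviving constant-term exponents automatically lie in the open negative cone of Langlands' criterion. That is not true, and it is precisely the crux. The $W$-translates $w\lambda_0$ include many that fail (\ref{langlandscriteria}); what saves the situation is that the corresponding Laurent coefficients $C^\sharp(w',m',f,\cdot)$ vanish to sufficiently high order, and proving this (the combinatorial identity (\ref{domvanishcondition}) combined with the non-vanishing (\ref{domnonvanishcondition})) is the entire content of Section~\ref{sec:proofofThm4.3}. Concretely, the cancellation arises from summing over the group $W_L\cong(\Z/2\Z)^{\#\Sigma_L}$ and exploiting that $M(w,\lambda_0,\chi)$ restricted to $W_L$ acts by the scalar $(-1)^{\ell(w)}$ (part~\ref{int:13} of Theorem~\ref{newintertwintheorem}), not from any positivity of exponents. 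Your Step~3 correctly anticipates that non-vanishing of the normalized archimedean intertwining operator is needed---this is part~\ref{int:14} of Theorem~\ref{newintertwintheorem}---but that piece is comparatively soft; the heavy lifting is the Laurent-coefficient cancellation you assume away in Step~2.
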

\noindent
This will be obtained as a consequence of Theorem~\ref{thm:unitarizablequotient} below.

Though our argument relies on   an analysis of intertwining operators and constant terms of Borel Eisenstein series, it is significantly different than the methods used earlier for the spherical representations in \cite{M}.  This is because here we directly construct the  full representation as a quotient using Eisenstein series, and must   deal with intertwining operators as operator-valued meromorphic functions (in particular, facing subtle issues of analytic continuation).  In contrast, the argument in \cite{M} only required (scalar-valued) intertwining operators on spherical vectors.  In fact, the analysis is sufficiently different  that  specializing the present argument to the spherical case does not recover the method of \cite{M}.

  Our global methods
  show that certain  partial residues of Eisenstein series are square-integrable. Consequently each of their local component representations is unitary, in particular their archimedean components (which realize the representations in Theorem~\ref{thm:mainthm}).  One also simultaneously deduces the unitarity of the nonarchimedean representations attached to these residues, which is itself an interesting aspect of Arthur's conjectures; however, these nonarchimedean representations could already be shown to be unitary using other  methods \cite{BC,BM}.

We use intertwining operators to reduce square-integrability to   a combinatorial calculation in the root system.  The specific technical scheme which accomplishes this in Section~\ref{sec:proofofThm4.3} unfortunately  does not literally apply to four particular representations we would like to study; modifying it to do so is likely possible, but would be lengthy and sacrifice uniformity.  Additionally,
though Arthur predicts
 certain unipotent representations are unitarily induced from proper Levi subgroups
 \cite[pp.~43-44]{Arthur-conjectures2}, proving this requires case-by-case calculations; these representations could also be studied using Eisenstein series, but again at the cost of length and uniformity.  For both of these reasons, we have elected to instead leverage  the recent algorithmic progress on the unitary dual problem \cite{The5} and found it simpler to verify the unitarity of the remaining representations (i.e., those not covered in Section~\ref{sec:proofofThm4.3}) using the {\tt{atlas}} software \cite{atlas}.  In fact, our methods are thankfully complementary in that they work best on the cases that are most difficult for {\tt{atlas}}, and vice-versa.  Thus although we expect Eisenstein series to apply more generally than to what we have used them for in this paper, using the {\tt{atlas}} software greatly streamlines the presentation of the paper.
%
%

We would like to acknowledge Jeffrey Adams, James Arthur, Dan Barbasch, Dan Ciubotaru, Brian Conrad, Howard Garland, James Humphreys, Richard Lyons, Allen Moy, Siddhartha Sahi, Wilfried Schmid, Freydoon Shahidi, Peter Trapa, and David Vogan for their   helpful conversations and advice.  We are particularly indebted to David Vogan for important initial conversations and for suggesting the strategy in Section~\ref{sec:unipotentarthurparmeters} of listing Arthur parameters first by the image of $W_\R$, and only secondly by the image of a commuting $SL(2,\C)$ subgroup.  Likewise, we owe a large debt of gratitude to Jeffrey Adams for numerous lengthy discussions about Arthur packets and the role of non-distinguished orbits, as well as for guidance on using the {\tt atlas} software.

\section{Exceptional Chevalley Groups}\label{sec:chevalleygroups}

In this section we describe the group-theoretic background necessary for
defining the Eisenstein series employed in proving Theorem~\ref{thm:mainthm}.  Like all automorphic forms, Eisenstein series are functions on a Lie group which are invariant under a discrete arithmetic subgroup.  For notational convenience -- and in the spirit of the global origin of Arthur's conjectures -- we will instead formulate them as functions on the adelic points of a  linear algebraic group.  In this section we review some basics of linear algebraic groups and their real and adelic points.

As in the introduction, $G$ is assumed to be a connected reductive linear algebraic group defined and split over $\Q$.  (Our techniques work over other fields of definition, but $\Q$ is adequate to capture all the representations in Theorem~\ref{thm:mainthm}).
  By standard reductions determining the representations of $G(\R)$ in terms of its factors, we may (and do) additionally assume that $G$ is    simple.   Since Arthur \cite{Arthur-book} has proven his conjectures for the classical groups $SO(n,n)$, $SO(n,n+1)$, and split $Sp(2n)$ (but not for  Spin covers or  the half-spin group $HSpin(2n)$),  we focus on exceptional groups and assume for the rest of the paper that $G$ is of exceptional type.   A   list of the seven such $G$ is given in Table~\ref{fig:dualgroups}.

\begin{table}$$\text{
\begin{tabular}{|c|ccccccc|}
\hline
$G$ & $G_2$ & $F_4$ & $E_6^{sc}$ & $E_6^{ad}$ & $E_7^{sc}$ & $E_7^{ad}$ & $E_8$ \\
\hline
$G^\vee$  & $G_2$ &  $F_4$& $E_6^{ad}$ & $E_6^{sc}$ & $E_7^{ad}$  & $E_7^{sc}$  & $E_8$
\\
\hline
\end{tabular}}$$
\caption{Exceptional Chevalley groups $G$ and their duals $G^\vee$.  Recall that there are unique Chevalley groups of type $G_2$, $F_4$, and $E_8$, while there exists both an adjoint form $E_n^{ad}$ and a simply connected form $E_n^{sc}$ for $n=6$ and $7$; in these cases $E_n^{ad}$ is a quotient of $E_n^{sc}$ of order 3 (if $n=6$) or order 2 (if $n=7$).  The arrows in the Dynkin diagram are also reversed in passing from $G$ to $G^\vee$ (see Figure~\ref{fig:Dynkin}).}
\label{fig:dualgroups}
\end{table}

These $\Q$-split linear algebraic groups are sometimes referred to as {\it Chevalley groups}.  However, the usage  of this terminology in the literature has changed somewhat with time.  In order to fit with more modern references, we will instead reserve the   term to mean a split, connected reductive group-scheme over $\Z$ having an irreducible root system.
Actually, such  group-schemes have the same classification as linear algebraic groups having irreducible root systems\footnote{This classification   for group schemes is found in \cite[Th\'eor\`eme 1.1, Expos\'e XXV]{SGA3}, while for linear algebraic groups over algebraically closed fields it is found in
\cite[Theorem 9.6.2, 10.1.1]{Springer}; see
 \cite[16.3.2, 16.3.3]{Springer} for the classification of split groups over any field.};  the group scheme vantage-point thus adds a $\Z$-structure to them.  We shall next review this notion in more concrete terms, as well as explain the connection with Steinberg's presentation \cite{Steinberg} for the simply connected form.  At this point it may be useful to note that
 Theorem~\ref{thm:mainthm} reduces to the case that $G$ is  simply connected because of some properties of $G(\R)$:~$E^{ad}_6(\R)=E^{sc}_6(\R)$, while $E_7^{ad}(\R)$ is a disconnected Lie group whose connected component is the image of the 2-to-1 projection map from $E^{sc}_7(\R)$.\footnote{This is analogous to the   relationship between $SL_n(\R)$ and $PGL_n(\R)$, for $n=3$ and $2$ (respectively).}  In both cases the unitarity of representations of the real points of the adjoint form is determined by those of the simply connected form, e.g., by induction.  Nevertheless, the complex points of the adjoint form $G^\vee$   play a fundamental role in the formulation of Arthur's conjectures (see Section~\ref{sec:unipotentarthurparmeters}).

We shall now review the Chevalley  construction of the five exceptional, simply-connected, split group-schemes $G$ over $\Z$.  For $r\in \{2,4,6,7,8\}$ let   $\Delta\subseteq \R^r$ denote the unique (up to scaling and rotation)  exceptional root system of rank $r$, endowed  with   inner product $(\cdot,\cdot)$.  Let $\Sigma=\{\alpha_1,\ldots,\alpha_r\}$ denote a fixed choice of simple roots and  $\Delta_+$ the corresponding positive roots in $\Delta$.
 Let $\frak g_\C$ denote the unique complex semisimple Lie algebra corresponding to $\Delta$.
  The Chevalley basis of $\frak g_\C$ consists of positive root vectors $X_\alpha$ and  negative root vectors $X_{-\alpha}$  for each $\alpha\in\Delta_+$, and neutral elements $H_\alpha=[X_\alpha,X_{-\alpha}]$ for each  $\alpha\in \Sigma$.  The  $\Z$-span of the Chevalley basis in $\frak g_\C$ is called the Chevalley lattice.

Each Chevalley basis element  defines a one-parameter algebraic subgroup of $G$. For the nilpotent elements  $X_{ \alpha}$, these can be identified as  $u_{\alpha}(s)=\exp(s X_{\alpha})$, $\alpha\in \Delta$.  This identification can be made concrete  by realizing the Chevalley basis as   integral matrices, so that this exponential itself defines an algebraic matrix group isomorphic to ${\mathbb{G}}_a$.
 For  $\alpha\in \Delta$  let
\begin{equation}\label{walphat}
    w_\alpha(s) \ \ = \ \ u_{\alpha}(s) \,  u_{-\alpha}(-s^{-1}) \, u_{\alpha}(s) \ \ \ \ \text{and} \ \ \ \ h_{\alpha}(s) \ \ = \ \ w_{\alpha}(s)\,w_{\alpha}(1)^{-1}\,.
\end{equation}
The map $h_\alpha(\cdot)$ defines a one-parameter subgroup isomorphic to ${\mathbb{G}}_m$ corresponding to $H_\alpha$.
When $z$ is a complex number,  $h_\alpha(e^z)$ can be identified with $\exp(zH_\alpha)\in G(\C)$;~for a positive real number  $s$,
 $h_\alpha(s)$ can be identified with $\exp(\log(s)H_\alpha)\in G(\R)$.   The product of the $r$ ${\mathbb{G}}_m$-subgroups $\{h_\alpha(\cdot)\}$, $\alpha\in\Sigma$, is a split maximal torus $T$ of $G$.
    The roots   of $T$ in $G$ can be identified with $\Delta$, and
 the Weyl group $W$ of $\Delta$ is isomorphic to (and will be identified with) with the quotient of the normalizer of $T$
 by its centralizer.
  By definition the torus $T$ acts on $X_\alpha$ under the adjoint action by the character written as $t\mapsto t^\alpha$, for each $\alpha \in \Delta$ (we will identify $\alpha$ as this character, written using this exponential notation).
Let $\frak a_\C$  denote the complex span  of the  $H_\alpha$, which is a Cartan subalgebra of $\frak g_\C$ and the Lie algebra of $T$.  The $\R$-span $\frak a$ of the $H_\alpha$ is the Lie algebra of the real Lie group $T(\R)$.
Let $N\subset G$ (resp., $N_{-}\subset G$) denote the  maximal unipotent subgroup containing each subgroup $\{u_\alpha(\cdot)\}$ (resp. $\{u_{-\alpha}(\cdot)\}$  ), for all $\alpha\in \Delta_+$.  The Borel subgroup is the semidirect product $B=N\rtimes T$.

We now describe some facts which depend crucially  on the assumption that $G$ is simply connected.
For $F=\Q$ or a completion $\Q_v$  (where $v\le \infty$ is a place of $\Q$ and $\Q_\infty=\R$ by convention), the $F$-points $G(F)$ of $G$ are   generated by $\{u_\alpha(s)|\alpha\in\Delta,s\in F\}$.  In particular, $G(\R)$ is a connected Lie group (in contrast to the situation for  $E_7^{ad}(\R)$ mentioned above).
Elements of $T(\Q_v)$ have unique expressions as   products
\begin{equation}\label{uniquefact}
 h_{\alpha_1}(s_1)h_{\alpha_2}(s_2)\cdots h_{\alpha_r}(s_r)\,, \ \ \text{for some} \ s_1,\ldots,s_r\in \Q_v^\star\,,
\end{equation}
 \cite[Lemma~35]{Steinberg}.  Elements of $N(\Q_v)$ also can be uniquely expressed as products of factors $u_\alpha(x_\alpha)$, with the product taken over $\alpha\in\Delta_+$ in some fixed order and $x_\alpha\in \Q_v$ \cite[Theorem~7]{Steinberg}.  We choose (as we may) a full set of Weyl group representatives amongst the subgroup generated by the elements $\{w_\alpha(1)|\alpha\in\Sigma\}$ from (\ref{walphat}), the latter of which correspond to the simple reflections $w_\alpha$ of the root system (see \cite[Lemma~22]{Steinberg}).  By this and the Bruhat decomposition $G(\Q_v)=\cup_{w\in W}B(\Q_v)wB(\Q_v)$, each element of $G(\Q_v)$ can be   written as a product of elements in the above one-parameter subgroups corresponding to Chevalley basis elements.  Relations satisfied by these elements are given in \cite[\S6]{Steinberg}.

  The integral points $G(\Z)$ are defined as the stabilizer  in $G(\Q)$ of the Chevalley lattice under the adjoint action on $\frak g_\C$.  Alternatively, they
  are generated by the elements $u_\alpha(1)$, $\alpha\in \Delta$ \cite[p.~115]{Steinberg}.  In particular, $G(\Z)$ contains $w_\alpha(1)$ for all $\alpha\in \Delta$, and hence our full set of Weyl group representatives from the previous paragraph.  Likewise, for $p<\infty$ let
   $K_p=G(\Z_p)$   denote the stabilizer in $G(\Q_p)$ of the Chevalley basis tensored with $\Z_p$.  The adele group $G(\A)$ is the restricted direct product of all $G(\Q_v)$, $v\le \infty$, with respect to the $G(\Z_p)$, $p<\infty$.
 For any place $v\le \infty$ of $\Q$, the Iwasawa decomposition asserts that $G(\Q_v)=B(\Q_v)K_v$, where $K_p=G(\Z_p)$ for $p<\infty$ and $K_\infty$ is the maximal compact subgroup of $G(\R)$ generated by the one parameter subgroups $\{\exp(t(X_\alpha-X_{-\alpha}))|t\in \R\}$, $\alpha\in \Sigma$ (see \cite[Proposition 2.33]{Iwahori-Matsumoto}
or \cite[p.~114]{Steinberg}).

Above we identified the root system $\Delta$ with a set of characters of $T$, written as   $t\mapsto t^\alpha$.  This notation extends by linearity to define characters $t\mapsto t^\lambda$ for any element $\lambda$ of the root lattice $\Lambda_{\on{rt}}=\Z\alpha_1\oplus\Z\alpha_2\oplus\cdots \oplus \Z\alpha_r$.  Composition with the exponential map from $\frak a$ to $T(\R)$ allows us to identify these characters  with elements of ${\frak a}^*$, the space of real-valued linear functionals on $\frak a$.  We write the (nondegenerate) pairing  between ${\frak a}^*$ and $\frak a$ as $\langle \cdot,\cdot \rangle:{\frak a}^*\times {\frak a}\rightarrow \R$.   For each root $\alpha\in \Delta$, let $\alpha^\vee=\frac{2}{( \alpha,\alpha)}\alpha$ denote its coroot, regarded as an element of $\frak a$ using the nondegeneracy of the two pairings $(\cdot,\cdot)$ and $\langle \cdot,\cdot\rangle$.

Let
  $\varpi_1,\ldots,\varpi_r \in {\frak a}^*$ denote the fundamental weights  characterized by the inner product relations
\begin{equation}\label{varpidef}
   \langle\varpi_i,\alpha_j^\vee\rangle \ \ = \ \ \delta_{i=j} \ \ =  \ \ \left\{
                                                      \begin{array}{ll}
                                                        1, & i=j\,, \\
                                                        0, & i\neq j\,.
                                                      \end{array}
                                                    \right.
\end{equation}
We let $\Lambda_{\on{wt}}=\Z\varpi_1\oplus\Z\varpi_2\oplus\cdots \oplus \Z\varpi_r$ denote the weight lattice, which contains the root lattice $\Lambda_{\on{rt}}$:
\begin{equation}\label{latticechain}
  \Lambda_{\on{rt}} \ \ \subseteq \ \ \Lambda_{\on{wt}} \ \ \subseteq \ \ {\frak a}^*.
\end{equation}
The Chevalley basis elements satisfy $[H_\alpha,X_\beta]=\langle\beta,\alpha^\vee\rangle X_\beta$, and thus $h_\alpha(s)X_{\beta}h_\alpha(s)^{-1}=s^{\langle \beta,\alpha^\vee\rangle} X_\beta$  for $\alpha$, $\beta \in \Delta$ \cite[p.~68]{Steinberg}; in particular,
\begin{equation}\label{corootrootinteraction}
    h_\alpha(s)^\beta \ \ = \ \ s^{\langle \beta,\alpha^\vee\rangle}\,,
\end{equation}
reflecting that $h_\alpha$ is associated with the coroot $\alpha^\vee$.  This motivates the alternative notation
\begin{equation}\label{coroothsnotation1}
    \alpha^\vee(\cdot ) \ \ = \ \ h_\alpha(\cdot)
\end{equation}
for the algebraic map from $\mathbb G_m$ to $T$ defined in (\ref{walphat}).  More generally, if an element   $\varpi^\vee$ of the coroot lattice $\Z\alpha_1^\vee\oplus \Z\alpha_2^\vee\oplus\cdots \oplus \Z\alpha_r^\vee$
 is written as $\sum_{i\le r}c_i \alpha_i^\vee$, $c_i\in \Z$, we use the notation
\begin{equation}\label{coroothsnotation2}
    \varpi^\vee(s) \ \ = \ \  \alpha_1^\vee(s)^{c_1}\cdots \alpha_r^\vee(s)^{c_r}  \ \ = \ \  h_{\alpha_1}(s)^{c_1}\cdots h_{\alpha_r}(s)^{c_r}
\end{equation}
so that
\begin{equation}\label{coroothsnotation3}
    \varpi^\vee(s)^\beta \ \ = \ \ s^{\langle \beta,\varpi^\vee\rangle}\,, \ \ \ \beta\,\in\,\Delta\,,
\end{equation}
consistent with   (\ref{corootrootinteraction}).

 Suppose $\chi$ is an algebraic character of $T$; then $\chi\circ h_{\alpha_i}$ must itself be an algebraic character of $\mathbb G_m$,  hence an integral power. Therefore $\chi$ is determined by integers $c_1,\ldots, c_r$ such that $\chi(h_{\alpha_i}(s))=s^{c_i}$ for all $i\le r$.  In particular, $\chi$ determines an integral weight  $\lambda=\sum_{i\le r}c_i\varpi_i\in \Lambda_{\on{wt}}$; we shall write $\chi(t)=t^\lambda$, a notation consistent with the earlier usage for  $\lambda\in \Lambda_{\on{rt}}$ as well as (\ref{corootrootinteraction}).  Therefore the full set of algebraic  characters $X(T)$ can be viewed as a subset of the weight lattice,
\begin{equation}\label{XTlattice}
  \Lambda_{\on{rt}} \ \ \subseteq \ \ X(T) \ \ \subseteq \ \ \Lambda_{\on{wt}}\,.
\end{equation}
 Like all characters defined on $T$, elements of $X(T)$ extend trivially on $N$ to algebraic characters of $B=N\rtimes T$; we will also denote such characters as  $b\mapsto b^\lambda$, for some $\lambda\in \Lambda_{\on{wt}}$.
Note  that the containments in (\ref{XTlattice}) may in general be proper.   Since we have been assuming $G$ is simply connected, $X(T)=\Lambda_{\on{wt}}$ and the full set $X^\vee(T)$ of morphisms from $\mathbb G_m$ to $T$ coincides with the coroot lattice.
  All three lattices in (\ref{XTlattice}) coincide for $G_2$, $F_4$, and $E_8$.  The center of $G$ and its character group are both isomorphic to the finite quotient $X(T)/  \Lambda_{\on{rt}}$, which is cyclic of order  3 for $E_6$ and of order 2 for $E_7$; $Z(G)$ is explicitly described in    (\ref{centers}) below.

The Weyl group $W$
 acts on all three lattices in (\ref{XTlattice})  and  their common complexification, denoted ${\frak a}^*_{\C}$.
  Each $w\in W$, when regarded as an element of $G(\Z)$ as above, normalizes $T$ and satisfies the relation
   \begin{equation}\label{wtwvarpi}
     (wtw^{-1})^\varpi  \ \ = \ \  t^{w^{-1}\varpi}
   \end{equation}
    for any $\varpi\in X(T)\subseteq \Lambda_{\on{wt}}$ ($wtw^{-1}$ is independent of the
choice of representative for $w$).
Likewise, there
is a natural action of $w\in W$ on elements $\varpi^\vee\in X^\vee(T)$  such that
\begin{equation}\label{wvarpi}
  (w \varpi^\vee)(\cdot) \ \  = \ \  w\varpi^\vee(\cdot)w^{-1}\,,
\end{equation}
 again independent of the choice of representative of $w$.

We conclude this section with some comments about non-simply connected $G$, whose appearance in this paper is limited to the complex points of  $E_6^{ad}$ and $E_7^{ad}$.  These $G$ are both quotients  of the simply connected algebraic group by its (finite) center:
\begin{equation}\label{centers}
\aligned
  Z(E_6^{sc}) \ \  & = \ \ \{e,z_{E_6},z_{E_6}^2\} \,  , \  \text{where} \ \ z_{E_6} \ = \
  h_{\alpha_1}(\omega)h_{\alpha_3}(\omega^2)h_{\alpha_5}(\omega)h_{\alpha_6}(\omega^2)
  \,, \\
  Z(E_7^{sc}) \ \  & = \ \ \{e,z_{E_7}\} \,  , \   \text{where} \ \ z_{E_7} \ = \ h_{\alpha_2}(-1)h_{\alpha_5}(-1)h_{\alpha_7}(-1)\,,\\
\text{and~~}  Z(G_2) \ \ & = \ \ Z(F_4) \ \ = \ \ Z(E_8) \ \ = \ \ \{e\},
\endaligned
\end{equation}
with $\omega$  denoting a   primitive third root of unity.
Thus computations in the adjoint form can be performed by working modulo this center in the simply connected form described above.  (Since  $E_6^{ad}$ and $E_7^{ad}$ only appear in the guise of their complex points in this paper,  no problems arise from the fact that $\omega\notin\R$.)

Every Chevalley group $G$ has a Langlands dual Chevalley group $G^\vee$ whose root system is $\{\alpha^\vee|\alpha\in \Delta\}$, $\alpha^\vee=\frac{2}{( \alpha,\alpha)}\alpha$ (see Table~\ref{fig:dualgroups} and  Figure~\ref{fig:Dynkin}).  Passage to the dual interchanges long roots and short roots, so the numbering of the nodes for the Dynkin diagrams of ${\frak g}_2^\vee$ and ${\frak f}_4^\vee$ in Figure~\ref{fig:Dynkin} has been reversed.  By construction of $G^\vee$, the duality swaps the role of $X(T)$ and $X^\vee(T)$, and so elements of ${\frak a}^\vee_\C$ naturally give rise to elements of $\frak a^*_\C$, i.e., linear functionals.  We use the notation $T^\vee$   to denote   the  subgroup  corresponding to $T$   in $G^\vee$; it is the quotient, modulo the center, of the maximal torus  in the simply-connected algebraic covering group   described above.

In the rest of the paper we shall resume our assumption that $G$ is simply connected, keeping in mind that $G^\vee$ is in general not.

\section{Induced representations and flat sections}\label{sec:induced}

Fix a place $v\le \infty$ of $\Q$ and consider $T(\Q_v)\subset  B(\Q_v)\subset  G(\Q_v) $. 
  The valuation $|t^\alpha|_v$ is a positive real number for each $t\in T(\Q_v)$ and $\alpha\in \Delta$.  We  extend this definition to arbitrary elements $\lambda\in {\frak a}^*_{\C}=\Lambda_{\on{rt}} \otimes \C=\Lambda_{\on{wt}} \otimes \C$ by the formula
\begin{equation}\label{lambdaCaction}
  |t|_v^\lambda \ \ := \ \ \prod_{\alpha\in\Sigma} |t^\alpha|_v^{c_\alpha}\,,
\end{equation}
where  $\lambda$ is written as $\sum_{\alpha\in\Sigma} c_\alpha \alpha$, $c_\alpha \in \C$.  In particular, if $\lambda\in X(T)\subseteq \Lambda_{\on{wt}}$, then $|t|_v^\lambda =|t^\lambda|_v$.  This notation extends to $b=tn\in B(\Q_v)$ by defining $|b|_v^\lambda$ as $|t|_v^\lambda$, and further to  elements $b=\prod_{v\le \infty}b_v\in B(\A)$  (and hence $T(\A)$) of the global adele group $G(\A)$  through the product formula $|b|^\lambda=\prod_v|b|_v^\lambda$.  Each $b\in B(\A)$ determines a unique element $\log|b|\in\frak{a}$ such that
\begin{equation}\label{logbdef}
  e^{\langle \lambda,\log|b|\rangle} \ \ = \ \ |b|^\lambda
\end{equation}
for all $\lambda \in \frak{a}^*_\C$.

In addition to the continuous parameter $\lambda$, the induced representations studied in this paper also involve {\it quadratic} characters $\chi_v$ of $T(\Q_v)$, i.e., characters such that   $\chi_v^2$ is trivial.  We review two ways quadratic characters of $T(\Q_v)$ are connected to quadratic characters $\xi_v$ of $\Q_v^*$.  First,
composition of a fixed  $\xi_v$ with characters of $T$,
\begin{equation}\label{compwithxiv}
    \delta\,\in\,X(T) \ \ \longmapsto \ \ \xi_v\circ\delta\ ,
\end{equation}
defines a homomorphism from $X(T)$ to the group of quadratic characters of $T(\Q_v)$
 (and hence $B(\Q_v)$, after trivially extending to $N(\Q_v)$).   Since $\xi_v$ is quadratic, (\ref{compwithxiv}) factors through $X(T)/2X(T)$.  Under our assumption that $G$ is simply connected, $X(T)=\Lambda_{\on{wt}}$ and the character defined by (\ref{compwithxiv}) has the form $t\mapsto \xi_v(t^\delta)$ for some $\delta\in\Lambda_{\on{wt}}$.

Second, the duality $G\longleftrightarrow G^\vee$ identifies  $X(T^\vee)$ with $X^\vee(T)$.  Recall that since $G$ is simply-connected, elements of $T(\Q_v)$ have  unique factorizations of the form (\ref{uniquefact}).  Given a quadratic character $\xi_v$ of $\Q_v^*$ and an order two element $\sigma\in T^\vee(\C)$, we define a quadratic character $\chi_{v,\sigma}$ of $T(\Q_v)$ characterized by the property that
\begin{equation}\label{characterizechiv}
  \chi_{v,\sigma}\circ \varpi^\vee \ \ = \ \ \begin{cases}
\text{trivial}, & \sigma^{\varpi^\vee}=1\,\\
\xi_v, & \sigma^{\varpi^\vee} = -1\,,
\end{cases}
\end{equation}
where $\varpi^\vee$ is regarded as an element of $X^\vee(T)$ on the left-hand side and as an element of $X(T^\vee)$ on the right-hand side ($\chi_{v,\sigma}$ is uniquely determined by (\ref{characterizechiv})).
More concretely, the adjoint action of $\sigma$ gives an  involution of the Lie algebra ${\frak g}^\vee_\C$, with fixed point set $({\frak g}^\vee_\C)^\sigma$. Then for any   $\alpha\in \Delta_+$, $\chi_{v,\sigma}\circ \alpha^\vee$ is  trivial if  $X_{\alpha^\vee}\in ({\frak g}^\vee_\C)^\sigma$, and is    $\xi_v$ otherwise.  The involution   uniquely determines  $\sigma\in G^\vee(\C)$, since any   order-two element which induces the same involution
must differ by an element of the center of $G^\vee$, which is trivial for the adjoint group $G^\vee$.

The characters of interest in this paper
have the form $\chi_{v,\sigma}$ from (\ref{characterizechiv}) for particular order-two elements $\sigma\in T^\vee(\C)$.  Furthermore, they may be written using (\ref{compwithxiv}) as  $\chi_{v,\sigma}=\xi_v\circ \delta$, where a coset representative for $\delta=\delta(\sigma)\in  X(T)/2X(T)=\Lambda_{\on{wt}}/2\Lambda_{\on{wt}}$ can be taken to be the sum of the fundamental weights $\varpi_k$ for which $X_{\alpha_k^\vee}\notin ({\frak g}^\vee_\C)^\sigma$.
If $\xi=\prod_v \xi_v$ is a global quadratic character of $\Q^*\backslash \A^*$, then
 \begin{equation}\label{globalcompwithxi}
  \chi \ \ = \ \ \xi \circ \delta
\end{equation}
is the  global quadratic character   $\chi=\prod_v \chi_v$  of $T(\Q)\bs T(\A)$ whose local components are related by (\ref{compwithxiv}) and (\ref{characterizechiv}).

 Let $\rho=\frac{1}{2}\sum_{\alpha\in \Delta_+}\alpha$.
 For $v$ a  place of $\Q$, $\lambda \in {\frak a}_{\C}^*$, and $\chi_v$ a quadratic character of $T(\Q_v)$,
 let
 \begin{equation}\label{Inudef}
    I_v(\lambda,\chi_v) \ \ = \ \ \Ind_{B(\Q_v)}^{G(\Q_v)} \chi_v |\cdot |_v^\lambda
\end{equation}
 denote the principal series representation of $G(\Q_v)$ unitarily induced from $\chi_v |\cdot |_v^\lambda$; by this we mean the  vector space of $K_v$-finite functions $f:G(\Q_v) \to \C$ satisfying the transformation law
 \begin{equation}\label{Inuspace}
     f(bg) \, = \,  \chi_v(b)  \, |b|_v^{\lambda + \rho} \, f(g)\, , \ \
 \text{for all~} \,b \in B( \Q_v) \ \,  \text{and} \,  \ g \in G(\Q_v) \,,
 \end{equation}
 on which $G(\Q_v)$ acts by right translation for $v<\infty$. (When $v=\infty$ this definition instead produces the $(\frak g,K)$-module of a principal series representation.)
  Each function in $I_v(\lambda,\chi_v)$ is determined by its restriction to the maximal compact subgroup $K_v$ via the  Iwasawa decomposition
$G(\Q_v) = B(\Q_v)  K_v$.  Note that the Iwasawa decomposition of an element of $G(\Q_v)$ is not unique, owing to the nontrivial intersection of $B(\Q_v)$ and $K_v=G(\Z_v)$;~thus these restrictions
must satisfy
\begin{equation}\label{KTcompatlocal}
f(bk) \ = \ \ \chi_v(b)\, f(k)\, , \ \ \ \text{for all~}\, b\,\in\,B(\Q_v)\cap K_v\ \, \text{and} \, \ k\,\in\,K_v\,,
\end{equation}
 in order to be compatible with the transformation law    (\ref{Inuspace}).
 Restriction to $K_v$ provides an isomorphism of $K_v$-modules  between $I_v(\lambda,\chi_v)$ and $\Ind_{B(\Q_v) \cap K_v}^{K_v} \chi_v$, the space of $K_v$-finite functions satisfying (\ref{KTcompatlocal}) endowed with the right translation action of $K_v$.  A family of functions  $\{g\mapsto f(g,\lambda)\in I_v(\lambda,\chi_v)|\lambda\in {\frak a}_\C^*\}$ having a common restriction to $K_v$ is called a {\it local flat section}.

Likewise, for a global quadratic character $\chi=\prod_v\chi_v$ as above let
 \begin{equation}\label{Idef}
    I(\lambda,\chi) \ \ = \ \ \Ind_{B(\A)}^{G(\A)} \chi |\cdot |^\lambda
\end{equation}
denote the vector space of all finite linear combinations of pure tensors $f=\otimes_{v\le\infty} f_v$,   $f_v\in I_v(\lambda,\chi_v)$, for which $f_v|_{K_v}\equiv 1$ for  all but finitely many places $v<\infty$.  Thus
\begin{equation}\label{globalprincseries}
f(bg) \ \ =  \ \ \chi(b) \,  |b|^{\lambda + \rho}\,f(g) \, ,
\ \ \  \text{for all~}\, b \in B( \A)\ \text{and} \ g \in G(\A)\,.
\end{equation}
Here, too, $f$ is determined by its restriction to the maximal compact subgroup $K=\prod_{v\le\infty}K_v$, which must satisfy
\begin{equation}\label{KTcompatglobal}
    f(bk) \ = \ \ \chi(b)\, f(k)\, , \ \ \ \text{for all~}\,b\,\in\,B(\A)\cap K\ \text{and} \ k\,\in\,K\,;
\end{equation}
furthermore, $f$ is $K$-finite under right-translation.
 Similarly to the local situation, we write  $\Ind_{B(\A) \cap K}^K \chi$
 for the set of functions on $K$ obtained by restricting elements of $I(\lambda,\chi)$ to $K$, and call a family of functions  $\{g\mapsto f(g,\lambda)\in I(\lambda,\chi)|\lambda\in {\frak a}_\C^*\}$ a {\it global flat section} if they share a common restriction to $K$.

\section{Unipotent Arthur parameters}
\label{sec:unipotentarthurparmeters}

Let $W_\R$ denote the Weil group of $\R$, which is isomorphic to the  subgroup $\C^*\cup j \C^*$ of the multiplicative quaternions.  An  Arthur parameter for $G(\R)$ is a homomorphism
\begin{equation}\label{Arthur1}
    \psi \ : \  W_\R\,\times \,SL(2,\C)  \ \ \longrightarrow \ \ G^\vee(\C)
\end{equation}
whose restriction to $SL(2,\C)$ is algebraic, and whose restriction to $W_\R$ satisfies some further conditions.  Those conditions are automatically met in what Arthur \cite[p.~28]{Arthur-conjectures2} cites as the most difficult  case:~the {\it unipotent} Arthur parameters, where $\psi$'s restriction to the index-two subgroup $\C^*$ of $W_\R$ is trivial.   Thus a unipotent Arthur parameter is determined by $\psi|_{SL(2,\C)}$ and $\sigma=\psi(j)$, the latter having order at most two (because $\psi$ is trivial on $j^2=-1\in \C^*$).  Since $\psi$ in (\ref{Arthur1}) is a homomorphism of a direct product, the image $\psi(SL(2,\C))$ must commute with $\sigma$.  At the level of Lie algebras this
 commutativity is equivalent  to insisting that the values of the differential $d\psi$ of $\psi$ on the standard basis   $\{\ttwo0100,\ttwo 0010,\ttwo{1}00{-1}\}$ of ${\frak{sl}}(2,\C)$ lie inside the symmetric Lie subalgebra $({\frak g}^\vee_\C)^{\sigma}$ of $\frak{g}_\C^\vee$ fixed by $\sigma$ under the adjoint action.

  Since Arthur's conjectures concern $G^\vee(\C)$-conjugacy classes of Arthur parameters, there is no loss of
 generality in assuming that the semisimple element $\sigma=\psi(j)$ lies in $T^\vee(\C)$ and that  $d\psi\ttwo{1}00{-1}$ lies in $\frak{a}^\vee$, which we recall is identified with  $\frak a^*$, the $\R$-span of $\Lambda_{\on{rt}}$.  After taking a yet another   conjugate by the Weyl group if necessary, we may assume that $d\psi\ttwo{1}00{-1} \in {\frak a}^*$ is dominant and write it as $2\lambda_{{\cal O}^\vee}$, where
   $\mathcal O^\vee\subset \frak g_\C^\vee$ is the  adjoint nilpotent orbit of  $G^\vee(\C)$ containing $d\psi\ttwo0100$.  (Note that this conjugation potentially alters $\sigma=\psi(j)$, however.)  The pairings of $2\lambda_{\mathcal O^\vee}$ with the simple roots for $G^\vee$ (equivalently, simple coroots for $G$) are the nonnegative integers determining the weighted Dynkin diagram of $\mathcal O^\vee$ \cite{collingwood}, so in fact $d\psi\ttwo{1}00{-1} =2\lambda_{{\cal O}^\vee}$ is a dominant integral weight.

Thus a real unipotent Arthur parameter is determined by an element  $\sigma=\psi(j)\in T^\vee(\C)$ having order at most two, and a complex adjoint nilpotent orbit $\cal O^\vee\subset \frak{g}^\vee_\C$ which intersects $({\frak g}^\vee_\C)^{\sigma}$ nontrivially. The Langlands parameter attached to $\psi$ is the homomorphism $\psi_{\text{Langlands}}:W_\R\rightarrow G^\vee(\C)$ given by
\begin{equation}\label{langlandsparam}
\psi_{\text{Langlands}} \ : \ \srel{z\,\in\,\C^*  \ \  \longmapsto \ \ \psi\ttwo{|z|}00{|z|^{-1}} }{ j  \ \   \longmapsto \ \  \sigma\,=\,\psi(j)\,.}
\end{equation}
  We   assume $\sigma$ has order exactly 2, as the case of $\sigma$ trivial was treated in \cite{M}. Given a global character $\xi$ of $\Q^*\backslash \A^*$, we explained in section~\ref{sec:induced} how to construct a character $\chi_\sigma=\otimes_v\chi_{\sigma,v}$ of $T(\Q)\backslash T(\A)$ from $\sigma=\psi(j)$ using (\ref{characterizechiv}).


\begin{thm}\label{thm:unitarizablequotient}
Let $G$ be a Chevalley group other than $Spin(n+1,n)$, $Spin(n,n)$, or $HSpin(2n)$, and
let $\psi$ be a real unipotent Arthur parameter as above.
Then the  principal series  $I_\infty(\lambda_{{\cal O}^\vee},\chi_{\sigma,\infty})$ has a  unitarizable quotient having Langlands parameter $\psi_{\text{Langlands}}$.
\end{thm}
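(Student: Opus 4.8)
The plan is to realize the desired unitarizable quotient as an archimedean local factor of a residue of a Borel Eisenstein series, exploiting the fact that global square-integrability forces unitarity of every local component. First I would form the global Eisenstein series $E(g,f,\lambda)$ attached to a global flat section $f\in I(\lambda,\chi_\sigma)$, where $\chi_\sigma=\xi\circ\delta(\sigma)$ is built from $\sigma=\psi(j)$ as in (\ref{globalcompwithxi}) and $\xi=\prod_v\xi_v$ is a carefully chosen global quadratic Hecke character (this is where the ``mild arithmetic input'' from Dirichlet $L$-functions enters: one needs $\xi$ whose associated $L$-functions are nonvanishing or have prescribed pole behavior at the relevant points, and whose ramification is controlled so that the nonarchimedean components are unramified principal series at almost all places). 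The point $\lambda=\lambda_{\mathcal O^\vee}$ is, up to $\rho$-shift, the evaluation point dictated by the weighted Dynkin diagram of $\mathcal O^\vee$; one takes an iterated (partial) residue of $E(g,f,\lambda)$ along an appropriate affine subspace through $\lambda_{\mathcal O^\vee}$ so that the residue is a nonzero square-integrable automorphic form.

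The core analytic steps are: (i) use Langlands' constant term formula to compute the constant term of the residual Eisenstein series along $B$, expressing it in terms of the intertwining operators $M(w,\lambda)$ and their residues; (ii) apply Langlands' square-integrability criterion --- the exponents appearing in the constant term must have real parts strictly in the interior of the appropriate cone (i.e.\ lie in the convex hull of $w\lambda_{\mathcal O^\vee}-\rho$ with all coordinates negative with respect to the simple coweights) --- to conclude $L^2$; (iii) check non-vanishing of the residue, which reduces to non-vanishing of the leading term of the relevant intertwining operator on $f$, hence to a statement about which $M(w,\lambda)$ contribute a pole of the maximal order and whether the product of their residues is a nonzero operator on the chosen section. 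By the Howe--Moore/Gelfand--Piatetski-Shapiro-type argument, a nonzero square-integrable automorphic form generates a unitary subrepresentation of $L^2(G(\Q)\backslash G(\A))$; decomposing into local factors, its archimedean component is a unitary $(\mathfrak g,K_\infty)$-module, and by the constant term computation its Langlands data at $\infty$ is exactly $\psi_{\text{Langlands}}$ --- equivalently it is the Langlands quotient of $I_\infty(\lambda_{\mathcal O^\vee},\chi_{\sigma,\infty})$. That gives the theorem.

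The main obstacle --- and the technical heart of the argument --- is step (iii) combined with verifying that the $L^2$-exponent condition in (ii) actually holds for the particular $(\sigma,\mathcal O^\vee)$ pairs arising from unipotent Arthur parameters of the exceptional groups. Both reduce to a purely combinatorial problem in the root system: one must understand the singularities (pole orders and residues) of the standard intertwining operators $M(w,\lambda)$, which factor into rank-one operators whose poles are governed by $\langle\lambda+\rho,\alpha^\vee\rangle$ and the parity of $\chi_\sigma$ on the $\alpha$-root $SL_2$; tracking how these combine as $w$ ranges over $W$, and identifying for which $w$ the real part of $w(\lambda_{\mathcal O^\vee}+\rho)-\rho$ lies strictly inside the negative cone, is exactly the ``more combinatorial problem about intertwining operators'' promised in the abstract. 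I would organize this by parametrizing parameters first by the conjugacy class of $\sigma$ (equivalently the symmetric subalgebra $(\mathfrak g^\vee_\C)^\sigma$, i.e.\ by the image of $W_\R$) and only then by the nilpotent orbit $\mathcal O^\vee$ meeting it, as suggested in the acknowledgments; for the handful of pairs where the uniform intertwining-operator scheme fails one instead invokes the \texttt{atlas} computation of the unitary dual. A secondary subtlety is analytic continuation: the intertwining operators are operator-valued meromorphic functions, so one must justify that taking partial residues commutes with the relevant limits and that the residue section is genuinely a flat section in the reduced space $I(\lambda_{\mathcal O^\vee},\chi_\sigma)$ --- this requires some care about normalization of the operators and the location of the residual spectrum, but is in principle routine given Langlands' theory.
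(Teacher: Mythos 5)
Your plan is essentially the paper's proof: realize the representation as the archimedean component of a square-integrable residue of a Borel Eisenstein series with character $\chi_\sigma=\xi\circ\delta(\sigma)$, use Langlands' constant term formula and square-integrability criterion, reduce to a combinatorial check on intertwining operators, organize by $\sigma$ before $\mathcal O^\vee$, and invoke \texttt{atlas} for the handful of cases outside the uniform scheme. Two technical refinements in the paper are worth flagging, though, as they are where the actual work happens and where your sketch is vague. First, the paper does not take iterated residues along an affine subspace; it takes the leading Laurent coefficient of $E(f,\lambda_0+\epsilon\mu,\cdot)$ in a \emph{single} deformation parameter $\epsilon$ (with $\mu=\rho$), which sidesteps the usual order-of-residues delicacies. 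Second, the decisive device is the factorization $M(w,\lambda,\chi)=c(w,\lambda,\chi)\,R(w,\lambda,\chi)$ into a scalar Dirichlet $L$-function quotient $c$ (carrying all the poles) and a normalized operator $R$ that Theorem~\ref{newintertwintheorem} shows is holomorphic and nonvanishing near $\lambda_0$, combined with a regrouping of the constant term by $W_L$-cosets (not just $W$-orbits of $\lambda_0$) and a linear-independence-of-characters lemma ensuring the $C^\sharp(w',k,f,\cdot)$ cannot cancel across cosets; it is this, not a generic analysis of rank-one pole orders as you propose (and note the paper's poles/zeros are governed by $\langle\lambda,\alpha^\vee\rangle=\pm1$, not $\langle\lambda+\rho,\alpha^\vee\rangle$), that reduces (\ref{ourcriteria2}) to the clean multilinear identities (\ref{domvanishcondition})--(\ref{domnonvanishcondition}) checked case by case.
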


This statement is (mostly) a special case of Arthur's conjectures \cite{Arthur-conjectures1,Arthur-conjectures2}, which  are   stronger, global statements   concerning a full packet of representations (potentially more than just this one having Langlands parameter $\psi_{\text{Langlands}}$).  As mentioned in the introduction, the full Arthur packets are very difficult to identify.
Our contribution to Theorem~\ref{thm:unitarizablequotient} is for the exceptional groups $G$, since the classical group cases have been settled by M\oe glin \cite{Moeglin} and Arthur \cite{Arthur-book} (in addition, the case of $G_2$ was established by  Vogan \cite{Vogan}; see also \cite{Kim-G2}).  Recall from Section~\ref{sec:chevalleygroups} that it suffices to establish Theorem~\ref{thm:unitarizablequotient} for simply connected $G$.

\subsection{Reduction to distinguished Arthur parameters}\label{sec:sub:reductiontodistinguished}

Arthur points out in \cite[pp.~43-44]{Arthur-conjectures2} that when the image of an Arthur parameter $\psi$ is contained in a proper Levi subgroup of $G^\vee$ (i.e., $\psi$ is not ``distinguished'', or not ``elliptic'' in Arthur's terminology), one expects the full Arthur packet for $\psi$ to be unitarily induced from the Arthur packet for the corresponding parameter on the corresponding proper Levi subgroup.  In particular, Theorem~\ref{thm:unitarizablequotient} for such non-distinguished parameters $\psi$ can in principle be shown inductively from smaller groups.  In practice this requires calculating the composition series of the induced representation.

 In our present setting of exceptional groups it is possible to list all non-distinguished Arthur parameters, and directly verify Theorem~\ref{thm:unitarizablequotient} for them in {\tt atlas} \cite{atlas} -- either by directly invoking its {\tt is\_unitary} command (which works in many instances, but is impractical for some representations on $E_7$ and $E_8$) or by verifying Arthur's induction prediction from the previous paragraph.  We carried this out directly; as a result  Theorem~\ref{thm:unitarizablequotient} reduces to the case that $\psi$ is distinguished.
%
     The next two sections accordingly enumerate the full (finite) list of distinguished Arthur parameters  for  exceptional groups.

\section{Order two elements and stabilizers}\label{sec:ordertwoelements}

Because of their importance in describing the real unipotent Arthur parameters in Section~\ref{sec:unipotentarthurparmeters}, we shall now describe Elie Cartan's classification of the conjugacy classes of order two elements $\sigma$ in the adjoint  group $G^\vee(\C)$, as well as  the symmetric Lie subalgebras $({\frak g}^\vee_\C)^{\sigma}$ of $\frak{g}_\C^\vee$ they fix under the adjoint action.  We refer to \cite{Reeder} for a recent exposition of Kac's classification of finite-order Lie algebra automorphisms.

 Recall that we assume $G$ is simply connected and exceptional.
 It follows that
  $G^\vee(\C)$ is isomorphic to the quotient of $G(\C)$ by its center (\ref{centers}), and hence shares the common Lie algebra $\frak g_\C$; the identification of $\frak g_\C$ and $\frak g_\C^\vee$ sometimes requires keeping into account the reversed   numbering of the Dynkin diagram (see  Figure~\ref{fig:Dynkin}).    Without loss of generality we again assume  that
the semisimple element $\sigma$ lies in $T^\vee(\C)$.

Cartan's classification can be cleanly stated in terms of
the affine Dynkin diagram of $\f g_\C\cong \f g_\C^\vee$, which is obtained by
adding an additional node,   labelled $\mbox{\textcircled{\tiny 0}}$, to the Dynkin diagram of $\f g_\C$.  This node corresponds
to $-\alpha_{\text{high}}$, where $\alpha_{\text{high}}$ is the highest root of $\f g_\C$, and   is joined to the node  labelled $\mbox{\textcircled{\tiny i}}$ (corresponding to the simple root
  $\alpha_i$) if and only if $-\alpha_{\text{high}} + \alpha_i\in \Delta$  (equivalently, if $\alpha_i$ is not orthogonal to $\alpha_{\text{high}}$).  There is a unique $\alpha_i\in \Sigma$ with this property in each of the five exceptional root systems
(indeed, in all simple root systems except $A_n,$ $n>1$).
Figure~\ref{fig:Dynkin} lists the affine Dynkin diagrams for each exceptional root system.  In particular, this simple root is $\alpha_1$ in the cases of $\frak g_2$, $\frak f_4$, and $\frak e_7$, whereas it is $\alpha_2$ for $\frak e_6$ and $\alpha_8$ for $\frak e_8$.

Deleting any nonempty subset $S$ of nodes from the affine Dynkin diagram
results in the Dynkin diagram of some
reductive complex subalgebra $\f g'_\C$ of $\f g_\C$,
generated by   all   $X_{\alpha_i}$ and $X_{-\alpha_i}$ corresponding to the
undeleted nodes $\mbox{\textcircled{\tiny i}}$; thus $\frak g'$ has semisimple rank $r+1-\#S$.  If only one node is deleted, $\f g'_\C$ and $\f g_\C$ have equal rank and  share the common Cartan subalgebra $\frak a_\C$, with respect to which $\f g'_\C$ has a base of simple roots
 corresponding to the undeleted nodes.  Of course $\f g'_\C=\f g_\C$ when node $\mbox{\textcircled{\tiny 0}}$ is deleted, but otherwise this base of
simple roots for $\f g'_\C$ is not compatible with the notion of positivity
  inherited from $\f g_\C$.

Write $\alpha_{\text{high}}=\sum_{i\le r}n_i\alpha_i$; $n_i$ is the integer given just above and to the left of the node $\mbox{\textcircled{\tiny i}}$ in Figure~\ref{fig:Dynkin}.
\begin{figure}$$\text{
\includegraphics[scale=.339]{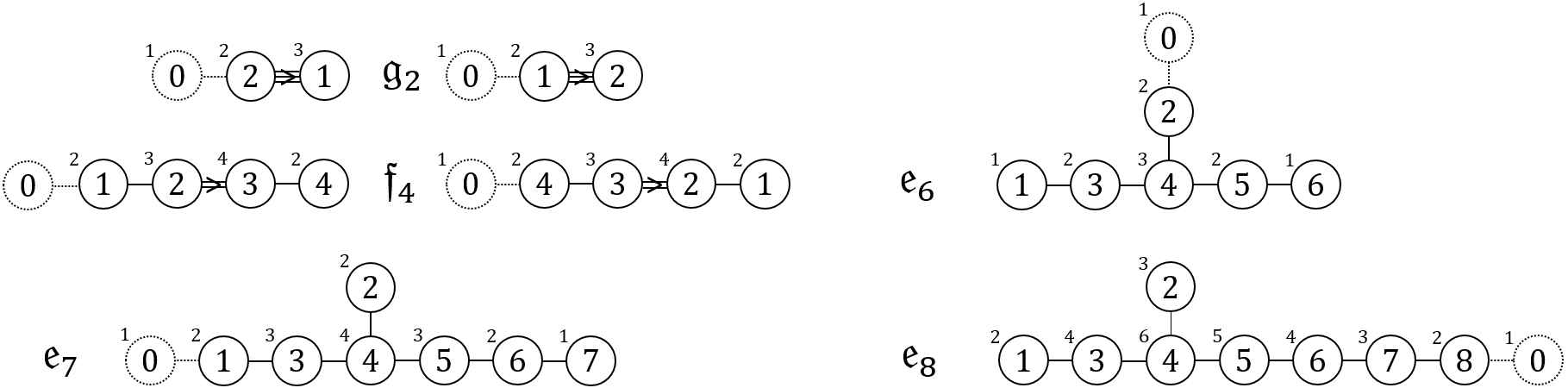}}$$
\caption{The extended Dynkin diagrams of the exceptional simple complex Lie algebras (left) and their duals  $\frak g_\C^\vee$ (right),
 which coincide for $\frak e_6$, $\frak e_7$, and $\frak e_8$.  Aside from the diagrams  for the duals of $\frak g_2$ and $\frak f_4$ (on the right of the first column), the positive integers inside each circle represent the usual Bourbaki numbering of the simple roots.  Those dual diagrams    involve reversing the arrows (owing to the interchange of short roots and long roots), hence the change in numbering.   The dotted circle containing 0 represents the affine node that is added to the classical Dynkin diagram, via a bond marked with a dotted line.  The integers above and to the left of each solid circle represent the coefficient of the highest  root when expanded as a positive integral combination of the simple roots.  They play an important role in  Cartan's classification of order two elements. }
\label{fig:Dynkin}
\end{figure}
  Cartan showed that if $\sigma\in T^\vee(\C)$ has order two, then  -- after possibly replacing $\sigma$ by a Weyl conjugate --
${\frak g}^\sigma_\C=\frak g_\C'+ \frak a_\C$, where $\frak g'_\C$   is obtained from the  affinize-and-delete procedure in the previous paragraph by either
\begin{enumerate}
\item[1)] removing a single node of the form $^2\mbox{\textcircled{\tiny i}}$, or
\item[2)] removing two nodes of the form $^1\mbox{\textcircled{\tiny i}}$
\end{enumerate}
from   the extended Dynkin diagram for $\f g_\C$ in Figure~\ref{fig:Dynkin} (here possibly $i=0$).  Conversely, for each subalgebra
$\frak g'_\C$
 obtained through either 1) or 2) there exists  a unique\footnote{The uniqueness follows from the fact that $G^\vee(\C)$ has trivial center, and hence only the identity element acts trivially on $\frak g_\C$.  Note that
 preimages of
  $\sigma$ in the covering group $G(\C)$  might have higher order, i.e., their squares might be nontrivial central elements.}  order-two element $\sigma\in G^\vee(\C)$
such that  $\f g^\sigma_\C=\frak g_\C'+ \frak a_\C$.  Note that  different choices of $S$ might yield order two elements $\sigma$ in the same Weyl group orbit, which can be straightforwardly detected.

Table~\ref{table:listofgjs} gives a  list of all conjugacy classes order two elements in Cartan's classification  for the dual algebras $\frak g^\vee_\C$ of the five   exceptional complex Lie algebras $\frak g_\C$.
\begin{table}$$\text{
\begin{tabular}{|c|c|c|c|c|}
\hline
$\frak g_\C^\vee$ & $(\frak g_\C^\vee)^\sigma$ & Proper Levi? & Nodes deleted & $\tilde\sigma\in T(\C)\subset G(\C)$\\
\hline
${\frak g}_2$ &  $\frak{sl}_2\oplus \frak{sl}_2$ & no & $\mbox{\textcircled{\tiny 1}}$  & $h_{\alpha_1}(-1)$ \\
$\frak{f}_4$ &  $\frak{sp}_6\oplus\frak{sl}_2$ & no  & $\mbox{\textcircled{\tiny 4}}$ & $h_{\alpha_2}(-1)h_{\alpha_4}(-1)$ \\
$\frak{f}_4$ &  $\frak{so}_9$ & no  & $\mbox{\textcircled{\tiny 1}}$ &  $h_{\alpha_3}(-1)$\\
$\frak{e}_6$ &  $\frak{so}_{10}+\frak a_\C$ & yes   &  $\mbox{\textcircled{\tiny 0}},\mbox{\textcircled{\tiny 6}}$&  $h_{\alpha_2}(-1)h_{\alpha_5}(-1)$  \\
$\frak{e}_6$ & $\frak{sl}_{6}\oplus\frak{sl}_{2}$ & no  &  $\mbox{\textcircled{\tiny 2}}$& $h_{\alpha_1}(-1)h_{\alpha_4}(-1)h_{\alpha_6}(-1)$  \\
$\frak{e}_7$ &   $\frak{e}_6+\frak a_\C$ & yes  & $\mbox{\textcircled{\tiny 0}},\mbox{\textcircled{\tiny 7}}$& $h_{\alpha_1}(-1) h_{\alpha_2}(i) h_{\alpha_4}(-1) h_{\alpha_5}(-i) h_{\alpha_7}(i)$ \\
$\frak{e}_7$ &  $\frak{sl}_8$ & no &  $\mbox{\textcircled{\tiny 2}}$ &
$h_{\alpha_2}(i)h_{\alpha_5}(-i)h_{\alpha_6}(-1)h_{\alpha_7}(i)$ \\
$\frak{e}_7$ &  $\frak{so}_{12}\oplus\frak{sl}_2$ & no  &  $\mbox{\textcircled{\tiny 1}}$& $h_{\alpha_2}(-1)h_{\alpha_3}(-1)$ \\
$\frak{e}_8$ &   $\frak{so}_{16}$ & no   &  $\mbox{\textcircled{\tiny 1}}$  & $h_{\alpha_2}(-1)h_{\alpha_3}(-1)$  \\
$\frak{e}_8$ &   $\frak{e}_7\oplus\frak{sl}_2$ & no  &    $\mbox{\textcircled{\tiny 8}}$ & $h_{\alpha_2}(-1)h_{\alpha_5}(-1)h_{\alpha_7}(-1)$ \\
\hline
\end{tabular}}$$
\caption{Elie Cartan's classification of conjugacy classes of order two elements in the complex exceptional adjoint groups $G^\vee(\C)$.    The circled numbers describing the deleted nodes refer to the dual Lie algebra Dynkin diagrams for $\frak g_\C^\vee$ (e.g., on the right in Figure~\ref{fig:Dynkin}).
The last column lists an element of $T(\C)\subset G(\C)$ which projects modulo the center (\ref{centers}) to $\sigma$; here $G(\C)$ is viewed as the cover of $G^\vee(\C)$, and so in this last column we instead use the numbering of the nodes from the diagrams on the left in Figure~\ref{fig:Dynkin}. }
\label{table:listofgjs}
\end{table}
Two cases involve deleting the node $^1\mbox{\textcircled{\tiny 0}}$, so that $(\frak g_\C^\vee)^\sigma$ is a proper Levi subalgebra; these two cases cannot give rise to distinguished Arthur parameters, and so will be ignored because of the reduction in Section~\ref{sec:sub:reductiontodistinguished}.
   The fourth column describes which nodes are deleted to form $(\frak g_\C^\vee)^\sigma$.  Under our  identification of $\frak g^\vee_\C$ with $\frak g_\C$,  $(\frak g_\C^\vee)^\sigma\subset  \frak g^\vee_\C$ corresponds to $(\frak g_\C )^{\tilde\sigma}\subset  \frak g_\C$ where $\tilde\sigma \in T(\C)$ is listed in the last column.  The elements  $\sigma$ and $\tilde\sigma$ agree for the   $E_8$ examples, and are related by reversing the numbering of the Dynkin diagram in the $G_2$ and $F_4$ examples.  That leaves the $E_6$ and $E_7$ examples, in which we have listed a preimage $\tilde\sigma$ of $\sigma$ in the simply connected cover.

%
%
%
%
%
%

\section{Orbits and Arthur's conjectures}
\label{sec:listofcases}

In Table~\ref{table:listofgjs} we listed the conjugacy classes of order two   elements $\sigma\in G^\vee(\C)$ and the symmetric subalgebras $({\frak g}^\vee_\C)^\sigma$ they fix.
In all but two entries in Table~\ref{table:listofgjs} (both of which have already been disregarded since  $({\frak g}^\vee_\C)^\sigma$ is a proper Levi subalgebra, and hence cannot contribute any interesting cases in light of the reduction in Section~\ref{sec:sub:reductiontodistinguished}), $({\frak g}^\vee_\C)^\sigma$ is constructed using the affinize-and-delete procedure reviewed in Section~\ref{sec:ordertwoelements} by removing a single node marked with a 2 above and to the left of it in Figure~\ref{fig:Dynkin}. It is an equal-rank reductive subalgebra of $\frak g^\vee_\C$ sharing the common Cartan $\frak a^\vee_\C$.  Let $\beta_1,\ldots,\beta_r$ be the base of simple roots (in some choice of numbering) for $({\frak g}^\vee_\C)^\sigma$ corresponding to the undeleted nodes in the affinize-and-delete procedure.  That is, there exists an index $k\in\{1,\ldots,r\}$ and a permutation $\tau\in S_r$ such that for $H\in \frak{a}^\vee_\C$,
\begin{equation}\label{subbase}
\beta_k(H)\,=\,-\alpha_{\text{high}}^\vee(H) \ \ \ \ \text{and} \ \ \  \ \beta_i(H)\,=\,\alpha_{\tau(i)}^\vee(H) \ \ \text{for} \ i\neq k\,,
\end{equation}
where $\alpha_{\text{high}}^\vee$ denotes the highest root for $\frak g^\vee_\C$ with respect to its basis of simple roots $\{\alpha_1^\vee,\ldots,\alpha_r^\vee\}$.

 We next turn to the calculation of the     conjugacy classes of  real unipotent Arthur parameters (\ref{Arthur1}) $\psi$ relevant for Theorem~\ref{thm:unitarizablequotient}, which will be listed in Table~\ref{fig:biglistofcases}.
   They are determined by $\sigma$ as well as  a complex adjoint nilpotent orbit ${\cal O}^\vee\subset {\frak g}^\vee _\C$ of $G^\vee(\C)$ that intersects $({\frak g}^\vee_\C)^\sigma$.  Recall from Section~\ref{sec:sub:reductiontodistinguished} that we have reduced to the situation that  $\psi$'s image is not contained in a proper Levi subgroup.   Furthermore, it can easily be  seen   (by writing  Levis as centralizers of elements in $\frak a_\C^\vee$) that the proper Levi subalgebras of $({\frak g}^\vee_\C)^\sigma$ are always contained in proper Levi subalgebras of ${\frak g}^\vee _\C$.  In particular, if  $d\psi\ttwo0100$ lies in a proper Levi subalgebra of $({\frak g}^\vee_\C)^\sigma$, then by the Jacobson-Morozov Theorem the image $\psi(SL(2,\C))$ is contained in a proper Levi.\footnote{Note that the image of the Arthur parameter $\psi:W_\R\times SL(2,\C)\rightarrow \G^\vee(\C)$ may nevertheless not be contained in this proper Levi, since $\psi(j)$ might lie outside it.}
    Thus the following procedure  determines all pairs of $(\sigma,\lambda_{\mathcal O^\vee})$ necessary to prove Theorem~\ref{thm:unitarizablequotient}.
\begin{enumerate}
  \item List all  distinguished, complex adjoint nilpotent orbits $\mathcal O^\vee_\sigma$ of the reductive Lie subalgebra $({\frak g}^\vee_\C)^\sigma$.
  \item Consider an Arthur parameter $\psi$ from (\ref{Arthur1}) with $d\psi\ttwo 0100 \in \mathcal O^\vee_\sigma\subset ({\frak g}^\vee_\C)^\sigma\subset{\frak g}^\vee_\C  $.
       The Lie algebra $({\frak g}^\vee_\C)^\sigma$ exponentiates to a subgroup $H_\sigma$ of $G^\vee(\C)$ which commutes with $\sigma$.
       By replacing $\psi$ with some $H_\sigma$-conjugate if necessary, arrange that $d\psi\ttwo{1}00{-1}$ lies in the Cartan $\frak{a}_\C^\vee$ common to both $({\frak g}^\vee_\C)^\sigma$ and ${\frak g}^\vee_\C $,  and is furthermore dominant with respect to the base of simple roots $\beta_1,\ldots,\beta_r$ of $({\frak g}^\vee_\C)^\sigma$.  The weighted Dynkin diagram of $\mathcal O^\vee_\sigma\subset  ({\frak g}^\vee_\C)^\sigma$  is labeled with the values of $\beta_i(d\psi\ttwo{1}00{-1})$, which  are either 0 or 2 for distinguished orbits \cite[Theorem 8.2.3]    {collingwood}.
Use this information  and (\ref{subbase}) to compute the values of the simple coroots $\alpha_i^\vee$ on $d\psi\ttwo{1}00{-1}$, which then determine the element  $2\lambda_{\mathcal O^\vee_\sigma}\in {\frak a}^*$ satisfying $\langle 2\lambda_{\mathcal O^\vee_\sigma},\alpha_i^\vee\rangle = \alpha_i^\vee(d\psi\ttwo{1}00{-1})$, $i\le r$.
Let $\lambda_1=-\lambda_{\mathcal O^\vee_\sigma}$, which in all of our cases is equal to $w_{\text{long}}\lambda_{\mathcal O^\vee_\sigma}$.
\item  Let $\mathcal O^\vee\subset \frak g^\vee_\C$ denote the $G^\vee(\C)$-saturation of $\mathcal O^\vee_\sigma$ (i.e., its containing orbit in $\frak{g}^\vee_{\C}$).  By construction, there exists a Weyl element $w_0\in W$ such that $\lambda_0:=w_0^{-1}\lambda_1$ lies in the positive Weyl chamber of $\frak a^*$, and such that the values $\langle 2\lambda_0,\alpha_i^\vee\rangle$ label the weighted Dynkin diagram of $\mathcal O^\vee$.  Then $\lambda_0=\lambda_{\mathcal O^\vee}$ is a dominant integral weight.
\end{enumerate}

\noindent {\bf Example.} To illustrate the above procedure, consider the case of $(\frak g^\vee_\C,(\frak g^\vee_\C)^\sigma)=(\frak e_7,\frak{sl}_{8})$ in   Table~\ref{table:listofgjs}.  There is precisely one distinguished nilpotent orbit of $\frak{sl}_{8}$:~the {\it principal} orbit, corresponding to a full Jordan block.  It contains the sums  of nonzero root vectors for all seven simple roots, and thus under the affinize-and-delete  embedding we deduce $(\frak g^\vee_\C)^\sigma$ includes the element $X_{-\alpha_{\text{high}}}+X_{\alpha_1}+X_{\alpha_3}+X_{\alpha_4}+X_{\alpha_5}+X_{\alpha_6}+X_{\alpha_7}\in \frak e_7$.  It is straightforward to see that $\lambda_1=9\varpi_2-\rho\in \Lambda_{\on{wt}}$, since $\langle2\lambda_1,-\alpha^\vee_{\text{high}}\rangle =\langle2\lambda_1,\alpha^\vee_{i}\rangle=-2$ for $i\in \{1,3,4,5,6,7\}$. The Weyl orbit of $\lambda_1$ includes the dominant integral weight $\lambda_0=\varpi_1+\varpi_4+\varpi_6$, so that $2\lambda_0$ corresponds to the weighted Dynkin diagram 2002020 for the nilpotent orbit of $\frak e_7$ with Bala-Carter label $E_6(a_1)$.  Although this orbit is not distinguished, the Arthur parameter $\psi$ is nevertheless distinguished:~the image of $SL(2,\C)$ under $\psi$ is contained in a proper Levi, but the full image of $\psi$ is not.

Table~\ref{fig:biglistofcases} lists all distinguished Arthur parameters $\psi$ (with $\psi(j)$ nontrivial) for exceptional groups, which (by the reduction of Section~\ref{sec:sub:reductiontodistinguished}) suffices   to prove Theorem~\ref{thm:unitarizablequotient}.
In all but three cases (the above $E_6(a_1)$ example   for $E_7$, and the first two listed cases for $E_8$) both the parameter $\psi$ and the orbit $\mathcal O^\vee$ are distinguished.  In all other cases of non-distinguished orbits $\mathcal O^\vee \subset (\frak g_\C^\vee)^\sigma$ we were able to show the parameter is also not distinguished, by finding a nontrivial torus inside $T(\C)$ commuting with $\sigma$.  In these remaining three listed cases, the connected component of the centralizer of $\psi(SL(2,\C))$ is an explicit torus on which conjugation by $\sigma$ acts as inversion; in particular, the full image of $\psi$ cannot be contained in any proper Levi subgroup.

 The first    column  in Table~\ref{fig:biglistofcases} describes $\sigma=\psi(j)$ implicitly, and the second lists the orbit  $\cal O^\vee$.
The remaining columns  describe  the principal series parameters in
 Theorem~\ref{thm:unitarizablequotient}.  Here as in Section~\ref{sec:induced} the order two element $\sigma\in T^\vee(\C)$ is described by an element $\delta(\sigma)\in \Lambda_{\on{wt}}/2\Lambda_{\on{wt}}$ for which $(-1)^{\langle \delta(\sigma),\alpha_i^\vee\rangle}=\sigma^{\alpha_i^\vee}$, $i=1,\ldots,r$, where $\alpha_i^\vee$ is regarded as a coroot on the left-hand side and as an element of $X(T^\vee)$ on the right-hand side.   The second pair of columns lists $\lambda_1$ determined as above, along with   $\delta_1=\delta(\sigma)$.  In each case $\delta_1$ can be taken to be the fundamental weight corresponding to the deleted node in the affinize-and-delete procedure.  The third pair of columns lists a chosen simultaneous Weyl conjugate $(\lambda_0,\delta_0 \pmod{2\Lambda_{\on{wt}}})=(w_0^{-1}\lambda_1,w_0^{-1}\delta_1\pmod{2\Lambda_{\on{wt}}})$, for which $\lambda_0$ is dominant.

\begin{table}[h]
$$
\begin{array}{|c|c||c|c||c|c|}
\hline
\text{$(G,  (\frak g_\C^\vee)^\sigma)$}&
\text{$\cal O^\vee$}&
\lam_1&\delta_1 &\lam_0& \delta_0\\
\hline
(G_2,\ \f{sl}_2\oplus\f{sl}_2)&G_2(a_1) & [2,-1] & \varpi_1 & 10 &   11\\
\hline
(F_4,\ \f{sp}_6 \oplus \f{sl}_2) &F_4(a_3)&[-1,0,-1,3]&\varpi_4&0010&0101\\
(F_4,\ \f{sp}_6 \oplus \f{sl}_2 )&F_4(a_2)&[-1,-1,-1,5]&\varpi_4&1010
& 0101\\
(F_4,\ \f{so}_9) &F_4(a_3)&[2,0,-1,0]&\varpi_1&0010&  1000\\
(F_4,\ \f{so}_9) &F_4(a_1)&[5,-1,-1,-1]&\varpi_1&1011&  1100\\
\hline
(E_6^{sc}, \  \f{sl}_6 \oplus \f{sl}_2) & E_6(a_3)&[-1,5,-1,-1,-1,-1]&\varpi_2&100101& 011010\\
\hline
(E_7^{sc},\ \f{so}_{12} \oplus \f{sl}_2) & E_7(a_5)&
-[-5,1,1,0,1,0,1]&\varpi_1&0001001&1101011
\\
(E_7^{sc},\ \f{so}_{12} \oplus \f{sl}_2) & E_7(a_4)&
-[-6,1,1,0,1,1,1]&\varpi_1& 1001001&1111011
\\
(E_7^{sc},\ \f{so}_{12} \oplus \f{sl}_2) & E_7(a_3)
&-[-8,1,1,1,1,1,1]&\varpi_1&1001011&1111110
\\
(E_7^{sc},\ \f{sl}_{8})  & E_6(a_1)
&-[1,-8,1,1,1,1,1]&\varpi_2&1001010&1111111
\\
\hline
(E_8,\ \f{e}_7\oplus \f{sl}_2)& D_5+A_2&
-[1,0,0,1,0,0,1,-6]& \varpi_8 & 00001001 & 00010100
\\
(E_8,\ \f{e}_7\oplus \f{sl}_2)& D_7(a_1)&
-[1,0,0,1,0,1,1,-8]& \varpi_8 & 10001001 & 00010100
\\
(E_8,\ \f{e}_7\oplus \f{sl}_2)& E_8(a_7)&
[0,0,0,-1,0,0,-1,5]&\varpi_8&\varpi_5&00010100
\\
(E_8,\ \f{e}_7\oplus \f{sl}_2)& E_8(b_5)&
-[1,1,1,0,1,0,1,  -9 ]&\varpi_8&00010011&01101000
\\
(E_8,\ \f{e}_7\oplus \f{sl}_2)& E_8(b_4)&
-[1,1,1,0,1,1,1,-11]&\varpi_8&10010011&01101000
\\
(E_8,\ \f{e}_7\oplus \f{sl}_2)& E_8(a_3)&
-[1,1,1,1,1,1,1,-14]& \varpi_8 &10010111&01101000
\\
(E_8,\ \f{so}_{16})& E_8(a_7)&
-[-5,0,0,1,0,0,1,0]&\varpi_1&\varpi_5&  00101010\\
(E_8,\ \f{so}_{16})& E_8(b_6)&
-[-8,1,1,0,1,0,1,0]&\varpi_1&00010001&  11010101\\
(E_8,\ \f{so}_{16})& E_8(a_6)&
-[-9,1,1,0,1,0,1,1]&\varpi_1&00010010&  11010111\\
(E_8,\ \f{so}_{16})& E_8(a_5)&
-[-11,1,1,0,1,1,1,1]&\varpi_1&10010010&  11110111\\
(E_8,\ \f{so}_{16})& E_8(a_4)&-[-14,1,1,1,1,1,1,1]&\varpi_1&10010101&  11111111\\
\hline
\end{array}$$
\caption{A list of cases needed in order to prove Theorem~\ref{thm:unitarizablequotient}.  Here $G$ is a simply connected form from Table~\ref{fig:dualgroups},
$(\frak g_\C^\vee)^\sigma$ is one of the symmetric subalgebras from Table~\ref{table:listofgjs}, and $\mathcal O^\vee$ is an adjoint orbit in $\frak g_\C^\vee$ meeting  $(\frak g_\C^\vee)^\sigma$ (identified by its Bala-Carter label).  The pair $(\lambda_1,\delta_1)$ encodes the Arthur parameter; $(\lambda_0,\delta_0)$ is a simultaneous Weyl translate $(\lambda_1,\delta_1)$ for which $\lambda_0=\lambda_{\mathcal O^\vee}$ is a dominant weight.  To save space, weights are listed as a vector or string encoding the coefficients in its expansion as a sum of the fundamental weights $\varpi_1,\ldots,\varpi_r$.
 }
\label{fig:biglistofcases}
\end{table}

\section{Quadratic Dirichlet characters, adelizations, and  $L$-functions}\label{sec:Dirichlet}

A Dirichlet character of modulus $q \in \Z_{>0}$ is a  function $\xi:\Z\rightarrow\C$ satisfying the following properties:
\begin{enumerate}
  \item[1)] $\xi(nm)=\xi(n)\xi(m)$ for all $n,m\in\Z$ (``complete multiplicativity'');
  \item[2)] $\xi(n+q)=\xi(n)$ for all $n\in \Z$;
  \item[3)] and $\xi(n)=0$ if and only if $\gcd(n,q)>1$.
\end{enumerate}
Said differently, $\xi$ is the extension of a character of $(\Z/q\Z)^*$ to $\Z$ which takes the value zero on integers not coprime to $q$.   For example, the  Legendre (i.e., quadratic residue) symbol $n\mapsto (\frac{n}{p})$ is a Dirichlet character  of modulus $p$  for $p$ prime. The unique nontrivial Dirichlet character of modulus  4 is defined by
\begin{equation}\label{chi4def}
 n \  \ \longmapsto \  \  \left(\frac{-4}{n}\right) \  := \ \left\{
                              \begin{array}{ll}
                                1, & n\equiv1\ \ \pmod4 \\
                                -1, & n\equiv -1\pmod 4 \\
                                0, & n\ \text{even.}
                              \end{array}
                            \right.
\end{equation}
 Other examples of Dirichlet characters include
\begin{equation}\label{chi8def}
n \ \  \longmapsto \  \  \left(\frac{8}{n}\right) \  := \  \left\{
                              \begin{array}{ll}
                                1, & n\equiv\pm1\pmod8 \\
                                -1, & n\equiv \pm 3 \pmod 8 \\
                                0, & n\ \text{even,}
                              \end{array}
                            \right.
\end{equation}
and $n\mapsto (\frac{-8}{n}):=(\frac{-4}{n})(\frac{8}{n})$, both of which have modulus 8.  Together,
\begin{equation}\label{charset}
  {\mathcal C} \ \ := \ \ \left\{ \left(\frac{-4}{\cdot}\right),\left(\frac{8}{\cdot}\right),\left(\frac{-8}{\cdot}\right)
\right\} \ \cup \left\{ \left(\frac{\cdot}{p}\right) \,|\, p \ \text{an odd prime} \right\}
\end{equation}
  is a set of    order-two Dirichlet characters  of prime power modulus.  These characters, which have image   precisely  $\{-1,0,1\}$, are famously connected to the arithmetic of quadratic number fields and binary quadratic forms (see, e.g., \cite{davenport} for more background, as well as proofs of the analytic properties summarized below).

All Dirichlet characters can be viewed as characters on $\Q^*\backslash \A^*$, where $\A$ is the ring of adeles of $\Q$.  We shall now review this identification specialized to the  characters in $\mathcal C$ above, for which it simplifies.  Write each modulus  of a character $\xi\in {\mathcal C}$ as $q=p^m$ for some prime $p$ (e.g., $p=2$ for the first three characters listed in (\ref{charset})), and define quadratic characters $\xi_v$ of each completion $\Q_v^*$ as follows.
 \begin{itemize}
   \item  If $v=\infty$, then $\xi_v$ is either the trivial character (if $\xi(-1)=1$) or the $\operatorname{sgn}$ character $x\mapsto \frac{x}{|x|}$ (if $\xi(-1)=-1$) of $\Q_\infty^* = \R^*$.
   \item
 If $v=\ell<\infty$ is not equal to $p$, then $\xi_v$ is trivial on $\Z_\ell^*$ and takes the value $\xi(\ell)=\pm 1$ on $\ell$; together, this defines $\xi_v$ on $\Q_\ell^*=\{\ell^k|k\in\Z\} \times \Z^*_\ell$.
   \item  Finally, if $v=p$ then $\xi_p(p)=1$,  so that $\xi_p$ is determined by its restriction to $\Z_p^*$; $\xi_p$ is defined there to take the common value $\xi(a)$ on the subset $a+p^m \Z_p\subset \Z_p^*$, for $a$ coprime to $p$.
 \end{itemize}
The characters $\xi_v$ have a global adelic product $\xi_{\mathbb A}=\prod_v\xi_v$,  a character on $\A^*$ which is trivial on $\Q^*$.  Henceforth we will tacitly identify $\xi$ with $\xi_\A$.

The Dirichlet $L$-function of $\xi\in{\mathcal C}$ is defined as
\begin{equation}\label{Lpsidef}
\aligned
  L(s,\xi) \ \ & := \ \ \sum_{n\,=\,1}^\infty \xi(n)\,n^{-s}  \ \ = \ \ \prod_{\ell \text{~prime}} (1-\xi(\ell)\ell^{-s})^{-1} \, , \  \ \ \Re{s} \,>\,1\,, \\
\endaligned
\end{equation}
the latter being the Euler product for this Dirichlet series.    It has analytic properties similar to those of Riemann $\zeta$-function $\zeta(s)=\sum_{n=1}^\infty n^{-s}$, which we recall has a holomorphic continuation to $\C-\{1\}$, a simple pole at $s=1$, and obeys the functional equation $\zeta^*(s) = \zeta^*(1-s)$, where
\begin{equation}\label{zetastardef}
  \zeta^*(s) \ \ = \ \ \pi^{-s/2}\,\Gamma(\textstyle{\frac s2})\,\zeta(s)\,.
\end{equation}
The analogous statements for $L(s,\xi)$ involve  the  local factors
\begin{equation}\label{Lvdef}
  L(s,\xi_v) \ \ := \ \ \left\{
                             \begin{array}{ll}
                               (1-\xi(\ell)\ell^{-s})^{-1}, & v=\ell<\infty, \\
                               \pi^{-s/2}\,\Gamma(\frac s2), & v=\infty \text{~and~} \xi_\infty(-1)=1, \\
                               \pi^{-(s+1)/2}\,\Gamma(\frac{s+1}{2}), & v=\infty \text{~and~} \xi_\infty(-1)=-1,
                             \end{array}
                           \right.
\end{equation}
so that $L(s,\xi)=\prod_{v<\infty}L(s,\xi_v)$.
Then for $\xi \in {\mathcal C}$, $L^*(s,\xi):=L(s,\xi_\infty)L(s,\xi)$ has an entire continuation to $\C$ satisfying a functional equation of the form
\begin{equation}\label{LstarFE}
  L^*(1-s,\xi) \ \ = \ \  q^{s-1/2}\,L^*(s,\xi)\,.
\end{equation}
Similarly to (\ref{Lvdef}), the factor $q^{s-1/2}$ in the functional equation (\ref{LstarFE}) has local origins in terms of the   $\epsilon$-factors
\begin{equation}\label{epsilonvdef}
  \epsilon(s,\xi_v) \ \ := \ \ \left\{
                          \begin{array}{ll}
                            1, & v=\infty \text{~and~}\xi_\infty(-1)=1, \\
                            i, & v=\infty \text{~and~}\xi_\infty(-1)=-1, \\
                            q^{1/2-s}, & v=p \text{~and~}\xi_p(-1)=1, \\
                            -iq^{1/2-s}, & v=p \text{~and~}\xi_p(-1)=-1, \\
                            1, & \text{otherwise}.
                          \end{array}
                        \right.
\end{equation}
Indeed, $\prod_{v\le \infty} \epsilon(s,\xi_v)=q^{1/2-s}$.

Define the function $c(s,\xi)=\frac{L^*(s,\xi)}{q^{1/2-s}L^*(s+1,\xi)}=\frac{L^*(1-s,\xi)}{L^*(1+s,\xi)}$, which is itself the global product $c(s,\xi)=\prod_{v\le \infty}c(s,\xi_v)$ of local factors defined by
\begin{equation}\label{cvdef}
  c(s,\xi_v) \ \  = \ \ \frac{L(s,\xi_v)}{\epsilon(s,\xi_v)L(s+1,\xi_v)}\,,
\end{equation}
and clearly satisfies
\begin{equation}\label{cfes}
  c(s,\xi)\,c(-s,\xi) \ \ = \ \  1\,.
\end{equation}
Note that formulas (\ref{Lvdef})-(\ref{cfes}) also apply to the trivial character $\xi$ of modulus $q=1$, in which case $L(s,\xi)$ specializes to $\zeta(s)$, $L^*(s,\xi)$ specializes to $\zeta^*(s)$, and  $c(s,\xi)$ specializes to $c(s):=\frac{\zeta^*(s)}{\zeta^*(s+1)}=\frac{\zeta^*(1-s)}{\zeta^*(1+s)}$.

The following statement contains the analytic properties of $L(s,\xi)$ and $c(s,\xi)$ which will be used later in the paper:
\begin{lem}\label{lem:Lfunctionproperties}

1)
$c(0)=-1$,  $c'(-1)=-\frac \pi 6$, and $c(s)$ has a simple pole at $s=1$ with residue $\frac 6\pi$.  Furthermore, $c(s)$ is holomorphic and nonvanishing on $\R-\{-1,1\}$.

2) If $\xi\in {\mathcal C}$ then $c(s,\xi)$ is holomorphic and nonvanishing at all  $s\in\mathbb \Z$, with $c(0,\xi)=1$.  Furthermore, $c(s,\xi)\neq 0$ is holomorphic and nonvanishing for $|\Re{s}|>1$.

3) For $\Re{s}=0$, $|c(s)|=1$ and $|c(s,\xi)|=1$  for all $\xi\in\mathcal C$.
\end{lem}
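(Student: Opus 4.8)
The plan is to reduce all three parts to the standard analytic facts recorded in Section~\ref{sec:Dirichlet} together with a short list of classical inputs: $\zeta^*(s)$ is holomorphic on $\C$ except for simple poles at $s=0$ and $s=1$ with residues $-1$ and $+1$ (the residue at $1$ being $\pi^{-1/2}\Gamma(1/2)\cdot 1=1$, that at $0$ following from the functional equation); $\zeta^*(2)=\pi^{-1}\Gamma(1)\zeta(2)=\tfrac{\pi}{6}$; $\zeta^*$ is zero-free on $\R$ (on $(1,\infty)$ by the Euler product, on $(0,1)$ because $\zeta<0$ there and $\Gamma(s/2)>0$, elsewhere by the functional equation — the trivial zeros of $\zeta$ at negative even integers being cancelled by poles of $\Gamma(s/2)$); $\zeta$, and hence $\zeta^*$, has no zeros on the line $\Re s=1$; and for $\xi\in\mathcal{C}$ nontrivial, $L^*(s,\xi)$ is entire with $L(s,\xi)\neq0$ on $\{\Re s\ge1\}$ (this last being the ``mild arithmetic input'', including $L(1,\xi)\neq0$; see e.g. \cite{davenport}). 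I will use throughout the reflected formulas $c(s)=\zeta^*(1-s)/\zeta^*(1+s)$ and $c(s,\xi)=L^*(1-s,\xi)/L^*(1+s,\xi)$, the original formulas, and the symmetries $c(s)c(-s)=1$ and (\ref{cfes}).

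\textbf{Part 1.} To compute $c(0)$ I would expand $c(s)=\zeta^*(s)/\zeta^*(s+1)$ near $s=0$: numerator and denominator each have a simple pole, with residues $-1$ and $+1$ respectively, so the limit is $-1$. For the pole at $s=1$, use the same formula: $\zeta^*(s)$ has residue $1$ there while $\zeta^*(s+1)|_{s=1}=\zeta^*(2)=\tfrac{\pi}{6}\neq0$, so $c$ has a simple pole with residue $\tfrac{6}{\pi}$. Then $c(s)c(-s)=1$ gives, near $s=1$, $c(-s)=1/c(s)=\tfrac{\pi}{6}(s-1)+O((s-1)^2)$; substituting $u=-s$ yields $c(u)=-\tfrac{\pi}{6}(u+1)+O((u+1)^2)$ near $u=-1$, whence $c(-1)=0$ and $c'(-1)=-\tfrac{\pi}{6}$. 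Holomorphy and nonvanishing on $\R\setminus\{-1,1\}$ is then bookkeeping on $c(s)=\zeta^*(s)/\zeta^*(s+1)$: since $\zeta^*$ is zero-free on $\R$ with poles only at $0,1$, the ratio can fail to be finite and nonzero only where exactly one factor blows up, i.e. at $s=1$ (a pole, since $\zeta^*(2)$ is finite nonzero) and $s=-1$ (a zero, since $\zeta^*(-1)=\zeta^*(2)$ is finite nonzero), the two blow-ups at $s=0$ cancelling as just computed.

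\textbf{Part 2.} For $|\Re s|>1$: if $\Re s>1$ then $L^*(1+s,\xi)\neq0$ (and finite) because the Euler product converges for $\Re(1+s)>2$ and the archimedean $\Gamma$-factor is nonzero there, while $L^*(1-s,\xi)=q^{s-1/2}L^*(s,\xi)\neq0$ by the functional equation; the case $\Re s<-1$ follows from (\ref{cfes}). This already covers integers with $|s|\ge2$. For $s=0$, $c(0,\xi)=L^*(1,\xi)/L^*(1,\xi)=1$ since $L^*(1,\xi)$ is finite ($L^*$ entire) and nonzero ($L(1,\xi)\neq0$). For $s=\pm1$, the functional equation gives $L^*(0,\xi)=q^{1/2}L^*(1,\xi)\neq0$, and $L^*(2,\xi)\neq0$ by the Euler product, so $c(\pm1,\xi)$ is finite and nonzero.

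\textbf{Part 3, and the main obstacle.} Each $\xi\in\mathcal{C}$ and the trivial character are real-valued, so $L(s,\xi)$ and $\zeta(s)$ have real Dirichlet coefficients and $\overline{\zeta^*(\bar s)}=\zeta^*(s)$, $\overline{L^*(\bar s,\xi)}=L^*(s,\xi)$ (the archimedean factors being real on $\R$). Writing $s=it$ on $\Re s=0$, this gives $\overline{\zeta^*(1+it)}=\zeta^*(1-it)$ and likewise for $L^*$, hence $|c(it)|=|\zeta^*(1-it)|/|\zeta^*(1+it)|=1$ and $|c(it,\xi)|=1$ — provided the values involved are finite and nonzero. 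Finiteness is automatic for $L^*(\cdot,\xi)$; for $\zeta^*$ it fails only at $t=0$, where $|c(0)|=|-1|=1$ directly. Nonvanishing of $\zeta(1\pm it)$ and $L(1\pm it,\xi)$ for all real $t$ is the only genuinely nontrivial ingredient of the whole lemma, imported from the classical nonvanishing of $\zeta$ and Dirichlet $L$-functions on the edge of the critical strip. Beyond that, the ``hard part'' is purely organizational: matching the poles and zeros of the completed $L$-functions against the exceptional set $\{-1,0,1\}$ and checking that the two simple poles of $c(s)$ at $s=0$ cancel to the finite value $-1$.
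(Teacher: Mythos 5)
Your proof is correct and follows essentially the same route as the paper's: everything in parts (1)–(2) is reduced to the functional equations and the classical nonvanishing of $\zeta$ and $L(\cdot,\xi)$ at the right places (with $L(1,\xi)\neq0$ as the key arithmetic input for $c(0,\xi)=1$), and part (3) is obtained from the real-valuedness of $\zeta^*$ and $L^*(\cdot,\xi)$ on $\R$ via Schwarz reflection. The only small stylistic difference is that you extract $c'(-1)=-\tfrac{\pi}{6}$ from the symmetry $c(s)c(-s)=1$ and the known pole at $s=1$, whereas one could equally read it off directly from $c(s)=\zeta^*(s)/\zeta^*(s+1)$ using $\zeta^*(-1)=\zeta^*(2)=\tfrac{\pi}{6}$ and the residue $-1$ of $\zeta^*$ at $0$; both are one-line computations and lead to the same place. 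One remark on your closing comment: nonvanishing of $\zeta$ and $L(\cdot,\xi)$ on the line $\Re s=1$ is in fact not strictly needed for part (3) — since $\overline{L^*(1+it,\xi)}=L^*(1-it,\xi)$, any zero on $\Re s=1$ would be matched by a zero of the same order in the numerator, so the ratio would still have modulus $1$ after removing the singularity — but invoking it does no harm and keeps the statement uniform with the nonvanishing facts already used in part (2).
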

\begin{proof}
 Statement 1) follows from the facts that $\zeta(s)$ is nonzero and holomorphic at all $s\neq 1$ on the real line, has a simple pole with residue 1 at $s=1$, and takes the value $\frac{\pi^2}{6}$ at $s=2$.

In contrast, $L^*(s,\xi)$ is entire for  $\xi\in {\mathcal C}$.   The convergent Euler product in (\ref{Lpsidef}) shows it is nonvanishing at $s\in \Z_{>1}$.  The  nonvanishing at $s=1$ is a consequence of (and  in fact was the impetus for) Dirichlet's famous class number formula.  The nonvanishing at $s=0,-1,-2,\ldots$ then follows from this using the functional equation (\ref{LstarFE}).  Statement 2) is a consequence of these facts and  the formula $c(s,\xi)=\frac{L^*(1-s,\xi)}{L^*(1+s,\xi)}$, along with the fact that  completed Dirichlet $L$-functions do not vanish outside of the critical strip.  Finally, for quadratic characters $\xi$ we have $L^*(s,\xi)\in \R$ for $s\in\R$, and the Schwartz reflection principle applied to this last formula shows statement 3) for $\xi\in\mathcal C$; the analogous statement for $c(s)=\frac{\zeta^*(1-s)}{\zeta^*(1+s)}$ is derived the same way.
\end{proof}

Let $\chi=\prod_v \chi_v$ be a quadratic character of $T(\Q)\backslash T(\A)$,
which we assume has the form $\chi=\xi\circ\delta$ from (\ref{globalcompwithxi}) for some $\xi\in\mathcal C$ and   $\delta\in X(T)=\Lambda_{\on{wt}}$ (recalling that $G$ is simply connected).
Recall from (\ref{coroothsnotation1}) that the coroot $\alpha^\vee$ of any root $\alpha\in \Delta$ can be identified with an algebraic morphism from $GL(1)$ to $T$, so that the composition $\chi\circ \alpha^\vee$ is a quadratic character of $\Q^*\backslash \A^*$ satisfying
\begin{equation}\label{chineginverse}
    \chi\circ((-\alpha)^\vee) \ \ = \ \ (\chi\circ\alpha^\vee)^{-1}\ \ = \ \ (\chi\circ\alpha^\vee)\,.
\end{equation}
Likewise, the local compositions $\chi_v\circ \alpha^\vee$ are quadratic characters of $\Q_v^*$ and $\chi\circ\alpha^\vee=\prod_v (\chi_v\circ\alpha^\vee)$, which is either trivial or   the Dirichlet character $\xi$ depending on whether the algebraic map $\delta\circ \alpha^\vee:\mathbb{G}_m\rightarrow \mathbb{G}_m$ is an even or odd power monomial.
Define the local factors
\begin{equation}\label{cvwdef}
  c_v(w, \lambda,\chi_v)  \ \ = \ \  \prod_{\alpha> 0, \; w\alpha < 0 }
 c_v( \la \lambda, \alpha^\vee \ra, \chi_v\circ \alpha^\vee)\,, \ \ v\,\le\,\infty\,,
\end{equation}
as well as their  global product
\begin{equation}\label{cwdef}
\aligned
 c(w,\lambda,\chi)  \ \ & = \ \  \prod_{v\le \infty} c_v(w, \lambda,\chi_v) \ \  = \  \  \prod_{\alpha> 0, \; w\alpha < 0 }   c( \la \lambda, \alpha^\vee \ra, \chi \circ \alpha^\vee)    \ , \\
\endaligned
\end{equation}
where $w\in W$
 and the notation  $\alpha>0$ (resp., $\alpha<0$) is shorthand for $\alpha\in \Delta_+$ (resp., $\alpha\in \Delta_{-}$).
  The Weyl group acts on $\chi$ by the formula
  \begin{equation}\label{wonchi}
    (w\chi)(t) \ \ := \ \ \chi(w^{-1}t{w})\,,
  \end{equation}
  which is consistent with (\ref{wtwvarpi}); in particular, since $\chi=\xi\circ\delta$ one has
  \begin{equation}\label{wchiwdelta}
    w\chi \ \ = \ \ \xi\circ w\delta\,.
  \end{equation}
  The following result is a fairly standard consequence of (\ref{cfes})  and (\ref{cwdef}).

\begin{prop}\label{prop:cw1w2}  Let $\chi$ be a quadratic character of $T(\Q)\backslash T( \A)$ of the form $\chi\circ \delta$, with $\chi\in\mathcal C$ and $\delta\in X(T)=\Lambda_{\on{wt}}$.
Then
\begin{equation}\label{cw1w2}
  c(w_1w_2,\lambda,\chi) \ \ = \ \ c(w_1,w_2\lambda,w_2\chi)\,c(w_2,\lambda,\chi)
\end{equation}
for any $w_1,w_2\in W$.
\end{prop}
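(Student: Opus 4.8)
The plan is to reduce the cocycle identity \eqref{cw1w2} to the corresponding statement at the level of individual root factors, using the factorization \eqref{cwdef} of $c(w,\lambda,\chi)$ as a product over roots made positive by $w$ (``inversion roots''). The key combinatorial fact is the standard decomposition of the inversion set of a product: for $w_1,w_2\in W$,
\begin{equation*}
\{\alpha>0 : w_1w_2\alpha<0\} \ = \ \{\alpha>0: w_2\alpha<0\} \ \sqcup \ w_2^{-1}\{\beta>0 : w_1\beta<0, \, w_2^{-1}\beta>0\},
\end{equation*}
or more precisely, the inversion roots of $w_1w_2$ split (when $\ell(w_1w_2)=\ell(w_1)+\ell(w_2)$) as the inversion roots of $w_2$ together with the $w_2^{-1}$-images of the inversion roots of $w_1$; in general one has to track signs more carefully, but the statement we want holds for all $w_1,w_2$ because of the sign-symmetry \eqref{chineginverse} together with $c(s,\xi)c(-s,\xi)=1$ from \eqref{cfes}, which makes the root factors insensitive to whether a given positive root is sent to a positive or negative root under the relevant intermediate Weyl element.

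First I would write out the right-hand side of \eqref{cw1w2} using \eqref{cwdef}: $c(w_2,\lambda,\chi)$ is a product of $c(\la\lambda,\alpha^\vee\ra, \chi\circ\alpha^\vee)$ over $\alpha>0$ with $w_2\alpha<0$, and $c(w_1,w_2\lambda,w_2\chi)$ is a product of $c(\la w_2\lambda,\beta^\vee\ra, (w_2\chi)\circ\beta^\vee)$ over $\beta>0$ with $w_1\beta<0$. In the second product I would substitute $\beta=w_2\alpha$; since $W$ permutes roots, $\beta$ ranges over $w_2(\Delta_+)$, and I would use the adjointness $\la w_2\lambda,(w_2\alpha)^\vee\ra = \la\lambda,\alpha^\vee\ra$ together with $(w_2\chi)\circ(w_2\alpha)^\vee = \chi\circ\alpha^\vee$ (this last being \eqref{wonchi} together with \eqref{wvarpi}) to rewrite the factor as $c(\la\lambda,\alpha^\vee\ra, \chi\circ\alpha^\vee)$ indexed by those $\alpha$ with $w_2\alpha>0$ and $w_1w_2\alpha<0$. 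Then the product of the two pieces runs over $\{\alpha>0: w_2\alpha<0\}\cup\{\alpha>0: w_2\alpha>0,\, w_1w_2\alpha<0\}$, and I must check this equals $\{\alpha>0: w_1w_2\alpha<0\}$ up to contributions that cancel.

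The delicate point --- and the place where \eqref{chineginverse} and \eqref{cfes} are essential --- is that these two index sets do \emph{not} literally partition $\{\alpha>0: w_1w_2\alpha<0\}$ unless $\ell(w_1w_2)=\ell(w_1)+\ell(w_2)$; in general there are ``overlap'' roots $\alpha>0$ with $w_2\alpha<0$ but $w_1w_2\alpha>0$, which appear in $c(w_2,\lambda,\chi)$ but should \emph{not} appear in $c(w_1w_2,\lambda,\chi)$, and correspondingly roots counted on the wrong side elsewhere. For such an overlap root $\alpha$, the root $\beta=-w_2\alpha>0$ satisfies $w_1\beta<0$, and the corresponding factor appearing in $c(w_1,w_2\lambda,w_2\chi)$ is $c(\la w_2\lambda,\beta^\vee\ra,(w_2\chi)\circ\beta^\vee) = c(-\la\lambda,\alpha^\vee\ra, \chi\circ\alpha^\vee)$ (using $\beta^\vee=-(w_2\alpha)^\vee$ and \eqref{chineginverse}), which by \eqref{cfes} is exactly the inverse of the factor $c(\la\lambda,\alpha^\vee\ra,\chi\circ\alpha^\vee)$ contributed by $\alpha$ to $c(w_2,\lambda,\chi)$. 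Hence every overlap root contributes a factor and its reciprocal, and these cancel in pairs. I expect this cancellation bookkeeping to be the main (though still routine) obstacle: one has to set up a clean bijection between the overlap roots on the $w_2$-side and the overlap roots on the $w_1$-side via $\alpha\mapsto -w_2\alpha$, verify it is an involution-free pairing, and confirm that after removing these pairs the remaining index set is precisely the inversion set of $w_1w_2$. Once that is done, \eqref{cw1w2} follows by comparing products term-by-term.
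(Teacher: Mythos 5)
Your proposal is correct and follows essentially the same route as the paper's proof: both partition the relevant inversion sets, identify the ``overlap'' roots $\alpha>0$ with $w_2\alpha<0$ but $w_1w_2\alpha>0$, pair them with their $-w_2$-images among the inversion roots of $w_1$, and cancel via (\ref{cfes}) together with (\ref{chineginverse}). The paper carries out the same bookkeeping slightly more explicitly with named sets $S_1,\ldots,S_5$ and computes the ratio of the two sides directly, but there is no substantive difference in the argument.
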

\begin{proof}
We start by partitioning the set of roots in the product definition of $c(w_1w_2,\lambda,\chi)$ from (\ref{cwdef}) as
\begin{equation}\label{cw1w2pf1}
    S \ \ =\ \ \{ \alpha> 0 \, | \, w_1w_2\alpha < 0\} \ \  = \ \ S_1\,\sqcup\,S_2\,,
\end{equation}
where
\begin{equation}\label{cw1w2pf2}
S_1 \ \ = \ \ \{ \alpha> 0  \, | \,  w_2 \alpha < 0 \ \text{and} \  w_1w_2\alpha < 0\}
\end{equation}
and
\begin{equation}\label{cw1w2pf3}
S_2 \ \ = \ \ \{ \alpha> 0 \,  |  \, w_2 \alpha > 0\ \text{and} \  w_1w_2\alpha < 0\}\,.
\end{equation}
Then
\begin{equation}\label{cw1w2pf4}
S_1 \ \ \subset \ \  S_3 \ \ = \ \ \{ \alpha> 0 \,|\, w_2 \alpha < 0\}\,,
\end{equation}
the roots in the product definition of $c(w_2,\lambda,\chi)$.  Also,
$S_2$ is in bijective correspondence with
\begin{multline}\label{cw1w2pf5}
S_4 \ = \ \{ \gamma > 0 \,|\, w_2^{-1} \gamma > 0\ \text{and} \  w_1\gamma < 0\} \ \ \subset \ \
S_5 \ =  \
\{ \gamma > 0 \,|\,  w_1\gamma < 0\}
\end{multline}
via $\gamma = w_2 \alpha$; $S_5$ is the set of roots in the product definition of $c(w_1,w_2\lambda,w_2\chi)$.

Therefore,
\begin{multline}\label{cw1w2pf6}
c(w_1w_2,\lambda,\chi) \ \ = \ \
\prod_{\alpha \in S}c( \la \lambda, \alpha^\vee \ra, \chi \circ \alpha^\vee)
 \ \ = \ \ \prod_{\alpha \in S_1}c( \la \lambda, \alpha^\vee \ra, \chi \circ \alpha^\vee)
\prod_{\alpha \in S_2}c( \la \lambda, \alpha^\vee \ra, \chi \circ \alpha^\vee) \\
= \  \ \prod_{\alpha \in S_1}c( \la \lambda, \alpha^\vee \ra, \chi \circ \alpha^\vee)
\prod_{\gamma \in S_4}c( \la w_2\lambda, \gamma^\vee \ra, w_2\chi \circ \gamma^\vee)\,,
\end{multline}
where in the last product we have  used the Weyl-invariance of $\langle \cdot,\cdot \rangle$ along with the cocharacter identities
$(w_2^{-1}   \gamma)^\vee(t) = w_2^{-1} \gamma(t) w_2$   and $(w_2\chi\circ \gamma^\vee)(s) = (w_2\chi)(\gamma^\vee(s))=\chi(w_2^{-1}\gamma^\vee(s)w_2)$ from  (\ref{wvarpi}).

In terms of the above sets,
\begin{equation}\label{cw1w2pf7}
\aligned
    \frac{c(w_1,w_2\lambda,w_2\chi)\,c(w_2,\lambda,\chi)}{
c(w_1w_2,\lambda,\chi)} \ \ & = \ \ \prod_{\gamma \in S_5-S_4} c( \la w_2\lambda, \gamma^\vee \ra, w_2\chi \circ \gamma^\vee)
\prod_{\alpha \in S_3-S_1} c( \la \lambda, \alpha^\vee \ra, \chi \circ \alpha^\vee)
\\ &  = \ \
\prod_{\srel{\alpha < 0, w_2\alpha >  0 }{ w_1w_2\alpha < 0}}  c( \la \lambda, \alpha^\vee \ra, \chi \circ \alpha^\vee)\,
\prod_{\srel{\alpha > 0, w_2 \alpha < 0 }{ w_1 w_2 \alpha > 0} }  c( \la \lambda, \alpha^\vee \ra, \chi \circ \alpha^\vee)\,.
\endaligned
\end{equation}
  The result now follows since the roots appearing in the second product
are precisely the negatives of the ones appearing in the  first product, and since
\begin{equation}\label{cw1w2pf8}
c( \la \lam, \alpha^\vee \ra, \chi\circ \alpha^\vee)\,
c( \la \lam, -\alpha^\vee \ra, \chi\circ (-\alpha^\vee))
\ \ = \ \ 1
\end{equation}
by \eqref{cfes} and \eqref{chineginverse}.
\end{proof}

\section{Borel Eisenstein series}\label{sec:BorelEisensteinSeries}

In this section we review some material from the theory
of Eisenstein series attached to the Borel subgroup $B$; see, for example,   \cite{Kim-borel,MW} for further reference.  Let $\chi=\prod_v\chi_v$ be a global quadratic character of $T(\Q)\backslash T(\A)$
and let  $f(g,\lambda) \in I(\lambda,\chi)$ be a   global flat section.  (The second argument of $f$ will sometimes be omitted when no confusion arises.)
The {\it minimal parabolic Eisenstein series} is defined as the sum
\begin{equation}\label{boreleisdef}
E(f, \lambda, g)
\ \ = \ \  \sum_{\gm \,\in\, B(\Q) \bs G(\Q)}
f(\gm g, \lambda)\,, \  \ \ g\,\in\,G(\A)\,,
\end{equation}
which is convergent in the Godement range $\{\lambda\in {\frak a}_\C^*| \Re\langle \lam-\rho,\alpha^\vee\rangle > 0$ for all $ \alpha\in \Sigma\}$.
It is a theorem of Langlands that $E(f, \lambda, g)$ has a meromorphic continuation to all $\lambda \in {\frak a}_{\C}^*$.

Using the Bruhat decomposition for $G(\Q)$, (\ref{boreleisdef}) can be rewritten as
\begin{equation}\label{rewritteneisdef}
  E(f, \lambda, g)
 \ \ = \ \ \sum_{w\,\in\,W} \sum_{\gm \in B(\Q) \bs (B(\Q)w^{-1}\!B(\Q))}
f(\gm g, \lambda)\,.
\end{equation}
Here the second sum is independent of the choice of representative of the Weyl group element $w^{-1}$.
Coset representatives for $B(\Q)\backslash (B(\Q)w^{-1}B(\Q))$ are provided by $w^{-1}(N(\Q)\cap w N_{-}(\Q)w^{-1})$.  Langlands utilized them after rearranging  an integration over $N$ to derive his constant term formula
\begin{equation}\label{eq: constantterm}
\int_{N(\Q)\backslash N(\A)} E(f, \lambda, ug)\, du \ \
=  \ \ \sum_{w\in W}\,
[M(w, \lambda,\chi)f](g,\lam)\,,
\end{equation}
where
for each $w\in W$    the intertwining operator
\begin{equation}\label{Mwlchisends}
 M(w,\lambda, \chi)\, : \, I(\lambda,\chi)  \ \ \longrightarrow \ \
I(w\lambda,w\chi)
 \end{equation}
is defined  by
\begin{equation}\label{Mdef}
[ M(w, \lambda,\chi)f](g)  \ \ = \ \  \int_{N(\A) \,\cap\, w N_-(\A) w^{-1}}
f(w^{-1} ug) \, du\,,
 \end{equation}
initially as a convergent integral for $\lambda\in \frak a_\C^*$
satisfying
\begin{equation}\label{Mdefrange}
\Re \la \lambda, \alpha^\vee \ra   \ \ > \ \  \la \rho, \alpha^\vee\ra \, , \quad
 \forall \alpha > 0\text{~such that~} w\alpha < 0\,,
\end{equation}
and then by meromorphic
 continuation to all $\lambda\in {\frak a}_\C^*$
 (see \cite[Prop.~II.1.6(iv)]{MW}).
 The analytic properties of $M(w,\lambda,\chi)$ are crucial to the rest of our analysis, and are the subject of the next section.

 \section{Properties of intertwining operators}\label{sec:intertwine}

Let $w\in W$ be an element of length $\ell=\ell(w)$; that is, $w$ is a reduced product $w=w_{\beta_1}\cdots w_{\beta_\ell}$ of $\ell$ simple reflections   attached to positive simple roots $\beta_1,\ldots, \beta_\ell$.
It follows from factoring the range of integration in definition (\ref{Mdef}) that
 the intertwining operators satisfy the  composition identity
 \begin{equation}\label{langlandslemma}
 M(w_1w_2,\lambda, \chi)  \ \ = \ \  M(w_1,w_2\lambda, w_2\chi)\circ M(w_2, \lambda,  \chi)
 \end{equation}
whenever $\ell(w_1w_2)=\ell(w_1)+\ell(w_2)$, an identity which in fact holds for all $w_1,w_2\in W$ \cite[Theorem IV.1.10(b)]{MW}.
In particular,   $M(w,\lambda,\chi)$ is the composition
\begin{multline}\label{Mwlengthell}
    M(w,\lambda,\chi) \ \ = \ \ M(w_{\beta_1},w_{\beta_2}\cdots w_{\beta_\ell}\lambda,w_{\beta_2}\cdots w_{\beta_\ell}\chi)\,\circ \\
    M(w_{\beta_2},w_{\beta_3}\cdots w_{\beta_\ell}\lambda,w_{\beta_3}\cdots w_{\beta_\ell}\chi)\circ
    \cdots
    \circ M(w_{\beta_{\ell}},\lambda,\chi)\,,
\end{multline}
in which each  factor  is defined by (\ref{Mdef}) as the integration
\begin{equation}\label{Mwsimple}
    [M(w_{\beta},\lambda,\chi)f](g) \ \ = \ \ \int_{\A} f(w_{\beta}^{-1}u_\beta(x)g)\,dx
\end{equation}
over the one-parameter unipotent subgroup $u_\beta(\cdot)$ attached to $\beta$
from Section~\ref{sec:chevalleygroups}.
The     operators $M(w_{\beta},\lambda,\chi)$ are essentially $SL_2$ intertwining operators, whose analytic properties can be directly understood using  calculus.

The intertwining operators $M(w,\lambda,\chi)$ are global tensor products of their local analogs
\begin{equation}\label{Mvmap}
    M_v(w,\lambda, \chi_v): I_v( \lambda,\chi_v)  \ \ \longrightarrow \ \
I_v(w\lambda,w\chi_v)\,,
\end{equation}
defined on $f_v\in I_v(\lambda,\chi_v)$ by
\begin{equation}\label{Mvdef}
[ M_v(w, \lambda,\chi_v)f_v](g) \ \  = \ \ \int_{N(\Q_v) \cap w \! N_-(\Q_v) w^{-1}}
f_v(w^{-1} ug) \, du
 \end{equation}
as an absolutely convergent integral in the region $\{\Re\langle \lambda,\alpha^\vee\rangle > 0 | \alpha >0,w\alpha<0\}$; (\ref{Mvdef})  then   meromorphically continues to $\lambda\in {\frak a}^*_\C$
(see \cite[Proposition 7.8]{Knapp} and \cite[Theorem 2.2]{Schiffmann} for $v= \infty$,
and  \cite[Theorem 5.3.5.4]{Silberger} and \cite[Remark 6.4.5]{Casselman}
for $v < \infty$).  Since elements $I_v(\lambda,\chi_v)$ are assumed to be $K_v$-finite by definition, the action of  $M_v(w,\lambda,\chi_v)$   can   be described in terms of  a sequence of finite-dimensional matrices whose entries are meromorphic functions of $\lambda$.

\textcolor{blue}{Note added after publication:~unfortunately, a misconception in earlier references about intertwining operators has bled through here.  We take the opportunity to point this out, what its impact is on statements here, and finally why it has no bearing on the final results.  These comments will all be in blue, so that the original text is left undisturbed wherever possible.  While the integrand in (\ref{Mdef}) does depend on the choice of $w\in G(\Z)$ from Section~\ref{sec:chevalleygroups}, the {\it global} intertwining operators are in fact independent of this choice.  However, some earlier statements in the literature notwithstanding, the same cannot be said for their {\it local} analogs $M_v(w,\cdot,\cdot)$ (\ref{Mvdef}) and $R_v(w,\cdot,\cdot)$ (\ref{Rvintop}), which do depend on the choice of $w\in G(\Z)\cap N(T)$.  Various choices of this representative differ by multiplication by an element in $T\cap G(\Z)$, and merely change the value of the intertwining operators by a scalar multiple via (\ref{Inuspace}).  There is in general no precise compatibility between these various choices in the local setting, due to (and only due to) this scalar ambiguity.
}

There is a well-known normalization of local intertwining operators  $M_v(w,\lambda,\chi_v)$ suggested by Langlands (and motivated by  the functional equations of $L$-functions and Eisenstein series) in \cite[Appendix II]{L}.  Let
\begin{equation}\label{Rvintop}
\aligned
 R_v(w,\lambda,\chi_{v}) \ \ & :=  \ \ c_v(w,\lambda,\chi_v)^{-1} M_v(w,\lambda,\chi_{v}) \\
    &  = \ \ \(\prod_{\alpha>0,w\alpha<0} \epsilon(\langle \lambda,\alpha^\vee\rangle,\chi_v\circ\alpha^\vee)\, \frac{L(\langle \lambda,\alpha^\vee\rangle+1,\chi_v\circ\alpha^\vee)}{L(\langle \lambda,\alpha^\vee\rangle,\chi_v\circ\alpha^\vee)}\)\!M_v(w,\lambda,\chi_{v})\,,
\endaligned
\end{equation}
where we have used (\ref{cvdef}) and (\ref{cvwdef}).
This normalization is chosen so that
\begin{equation}\label{Rvonspherical}
    R_v(w,\lambda,\chi_{v})f^\circ_{v,\lambda,\chi_v} \ \ = \ \
    f^\circ_{v,\lambda,\chi_v}
\end{equation}
when $\chi_v$ is unramified,
where $f^\circ_{v,\lambda,\chi_v}$ denotes the spherical flat section whose restriction to $K_v$ is identically one.
Define the global operator $R$ as the tensor product of all $R_v$, i.e.,
\begin{equation}\label{MRcidentity}
    M(w,\lambda,\chi) \ \ =  \ \ c(w,\lambda,\chi)\,R(w,\lambda,\chi)
\end{equation}
(see (\ref{cwdef})).
If $f=\otimes_v f_v$ is a pure tensor  with $f_v=f^\circ_{v,\lambda,\chi_v}$ for all $v$ outside a finite set of places $S$ containing the ramified places for $\chi$, then
\begin{equation}\label{MandRontensors}
    \aligned
    M(w,\lambda,\chi)f \ \ & = \ \ \otimes M_v(w,\lambda,\chi_v)f_v \\
   \text{and} \ \ \ \ \ \ R(w,\lambda,\chi)f \ \ & = \ \ \otimes R_v(w,\lambda,\chi_v)f_v \ \  = \ \
    \(\otimes_{v\in S} R_v(w,\lambda,\chi_v)f_v\) \otimes \(\otimes_{v\notin S}f^\circ_{v,\lambda,\chi_v}\).
    \endaligned
\end{equation}
In particular, the action of $R(w,\lambda,\chi)$ on pure tensors in $I(\lambda,\chi)$ involves only finitely many factors in a nontrivial way, so $R(w,\lambda,\chi)$ and  $M(w,\lambda,\chi)$ act as   finite-dimensional matrices on any particular element of $I(\lambda,\chi)$  (see the comments at the end of Section~\ref{sec:induced}).

The following theorem collects a number of   analytic properties of intertwining operators that will be used in the last two sections of this paper.  Some of these are well-known (even in more general contexts, e.g., \cite[\S2]{Shahidi81}), but others are not or are scattered in the literature.  See also \cite{Kim-G2,Kim-borel,Kim-Shahidi,Yuanli} for further background.

\begin{thm}\label{newintertwintheorem} Let $G$ be a simply-connected Chevalley group and
let $\chi=\prod_v\chi_v$ be a quadratic character of $T(\Q)\backslash T(\A)$ of the form $\xi\circ \delta$ from (\ref{globalcompwithxi}), where $\xi$ is one of the characters in the set $\mathcal C$ defined in (\ref{charset}) and $\delta\in X(T)=\Lambda_{\on{wt}}$.   Then the following properties hold.

\begin{enumerate}[label=\textnormal{\arabic*)}]

\item\label{int:1} For each
$\lambda$ in its range of absolute  convergence $\{\Re\langle \lambda,\alpha^\vee\rangle > 0 | \alpha >0,w\alpha<0\}$, the local intertwining operator $M_v(w,\lambda,\chi_v)$ from (\ref{Mvdef}) is not identically 0.

\item\label{int:2}  The intertwining operators $M(w,\lambda,\chi)$, $M_v(w,\lambda,\chi_v)$, $R(w,\lambda,\chi)$, and $R_v(w,\lambda,\chi_v)$   all have meromorphic continuations (as operators) to $\lambda \in {\frak a}_\C^*$ (that is, the evaluation   of their action on any fixed flat section, at any fixed group element, is meromorphic in $\lambda$).

\item\label{int:3}    Formula (\ref{langlandslemma}) holds for any $w_1, w_2\in W$.

\item\label{int:4}  For any $w_1, w_2\in W$, $R_v(w_1w_2,\lambda,\chi_v)=R_v(w_1,w_2\lambda,w_2\chi_v)\circ R_v(w_2,\lambda,\chi_v)$.  \textcolor{blue}{Note:~this is correct as stated, as long as $w_1$, $w_2$, and $w_1w_2$ are interpreted  as an elements of $G(\Z)\cap N(T)$.  If, however, they are thought of as  Weyl group elements, the formula is only correct up to a nonzero scalar.}

\item\label{int:5}   Let $\beta\in\Sigma$ and $w_\beta\in W$ be its associated simple  reflection.
Then $R_v(w_\beta,w_\beta\lambda,w_\beta\chi)\circ R_v(w_\beta,\lambda,\chi)$ \textcolor{blue}{is a nonzero scalar multiple of the identity operator} and
in particular
\begin{equation}\label{Mvintopsimplereflect}\begin{aligned}
  &M_v(w_\beta,w_\beta\lambda,w_\beta\chi)\circ M_v(w_\beta,\lambda,\chi)\equiv \\
  &\frac{ L(\langle \lambda,\beta^\vee\rangle,\chi_v\circ\beta^\vee)L(-\langle \lambda,\beta^\vee\rangle,\chi_v\circ\beta^\vee)}{\epsilon(\langle \lambda,\beta^\vee\rangle,\chi_v\circ\beta^\vee)\epsilon(-\langle \lambda,\beta^\vee\rangle,\chi_v\circ\beta^\vee)L(\langle \lambda,\beta^\vee\rangle+1,\chi_v\circ\beta^\vee)L(\langle -\lambda,\beta^\vee\rangle+1,\chi_v\circ\beta^\vee)}
\end{aligned}
\end{equation}
\textcolor{blue}{up to scalar multiples}, recalling that $\chi_v$ is a quadratic character.

\item\label{int:6}  If $\beta$ is a positive root such that $\langle \lambda,\beta^\vee\rangle=-1$ and $\chi_v\circ \beta^\vee$ is trivial,   then the right-hand side of (\ref{Mvintopsimplereflect}) vanishes.

\item\label{int:7}   $\(\prod_{\alpha>0,w\alpha<0}L_v(\langle \lambda,\alpha^\vee\rangle,\chi_v\circ \alpha^\vee)\)^{-1}M_v(w,\lambda,\chi_v)$ is entire.

\item\label{int:8} $R(w,\lambda,\chi)$ as well as each $R_v(w,\lambda,\chi_v)$ is holomorphic in $\{\Re \langle \lambda,\alpha^\vee\rangle > -1, \forall \alpha>0\text{~with~}w\alpha<0\}$.

\item\label{int:9}  $R_v(w,\lambda,\chi_v)$ is not the zero operator when $\langle \lambda,\alpha^\vee\rangle > 0 , \forall \alpha>0\text{~with~}w\alpha<0$.

\item\label{int:10}  $R_v(w,\lambda,\chi_v)$ is an isomorphism when $\langle \lambda,\alpha^\vee\rangle = 0, \forall \alpha>0\text{~with~}w\alpha<0$.

\item\label{int:11} $R_v(w,\lambda,\chi_v)$ is not the zero operator if $\Re\lambda$ is dominant.

\item\label{int:12} $R(w,\lambda,\chi)$ is not the zero operator  if $\Re\lambda$ is dominant.

\item\label{int:13} If $w$ can be written as a reduced word $w_{\beta_1}\cdots w_{\beta_\ell}$ such that each
$\langle \lambda,\beta_i^\vee\rangle = 0$ and $\chi\circ \beta_i^\vee$ is trivial, then $M(w, \lambda, \chi)$ is the scalar operator
$(-1)^{\ell}.$

\item\label{int:14} If $\Re{\lambda}$ is dominant, then there exists a vector $f_v\in I_v(\lambda,\chi_v)$ which is not annihilated by any of the operators $R_v(w,\lambda,\chi_v)$, $w\in W$.  Furthermore, there exists a pure tensor $f$ not annihilated by any of the operators
 $R(w,\lambda,\chi)$, $w\in W$. (Note that by property \ref{int:8}, these intertwining operators are holomorphic at $\lambda$.)
\end{enumerate}
\end{thm}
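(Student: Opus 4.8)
The plan is to reduce every clause of the theorem to the rank-one ($SL_2$) situation via the cocycle factorizations, to settle that case by hand with Tate's local functional equation and $\Gamma$-function calculus, and then to reassemble; the final clauses \ref{int:11}--\ref{int:14} will follow from non-vanishing of the rank-one normalizers. I would first import what is genuinely in the literature: the meromorphic continuations in \ref{int:2} from \cite{Knapp,Schiffmann} ($v=\infty$) and \cite{Silberger,Casselman} ($v<\infty$) for $M_v$, hence for $M=\otimes_v M_v$ (and hence for $R_v$, $R$, whose normalizing factors are ratios of meromorphic local, resp.\ completed, $L$- and $\epsilon$-functions); clause \ref{int:3} from \cite[Theorem IV.1.10(b)]{MW}, from which \ref{int:4} drops out upon dividing by the cocycle identity of Proposition~\ref{prop:cw1w2} place by place; and clause \ref{int:1} as the usual non-vanishing of the convergent integral (\ref{Mvdef}) --- evaluate $M_v(w,\lambda,\chi_v)f$ at $g=w$ on a flat section $f$ restricting on $K_v$ to a non-negative bump near the identity. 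Then I fix a reduced word $w=w_{\beta_1}\cdots w_{\beta_\ell}$, so that by (\ref{Mwlengthell}) and \ref{int:4} both $M_v(w,\lambda,\chi_v)$ and $R_v(w,\lambda,\chi_v)$ factor into $\ell$ rank-one operators attached to $w_{\beta_i}$ with data $\mu_i=w_{\beta_{i+1}}\cdots w_{\beta_\ell}\lambda$, $\eta_i=w_{\beta_{i+1}}\cdots w_{\beta_\ell}\chi_v$; the combinatorial input used throughout is $\langle\mu_i,\beta_i^\vee\rangle=\langle\lambda,\gamma_i^\vee\rangle$, where $\gamma_i=w_{\beta_\ell}\cdots w_{\beta_{i+1}}\beta_i$ runs exactly once over $\{\gamma>0:w\gamma<0\}$, together with $\eta_i\circ\beta_i^\vee=\chi_v\circ\gamma_i^\vee$ (again $\xi_v$ or trivial, by (\ref{wchiwdelta})).

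The heart of the argument is the rank-one analysis. Each $M_v(w_\beta,\mu,\eta)$ is the one-variable integral (\ref{Mwsimple}) over the root subgroup $u_\beta(\cdot)$; unwinding the $SL_2(\Q_v)$-structure on $K_v$, with Tate's thesis at $v<\infty$ and Mellin/$\Gamma$ calculus at $v=\infty$, one reads off (i) that $L(\langle\mu,\beta^\vee\rangle,\eta_v\circ\beta^\vee)^{-1}M_v(w_\beta,\mu,\eta)$ is entire --- this is Gindikin--Karpelevich, and composing it along the reduced word with the combinatorial input gives \ref{int:7} in general --- and (ii) the product law asserting that $M_v(w_\beta,w_\beta\mu,w_\beta\eta)\circ M_v(w_\beta,\mu,\eta)$ equals the right-hand side of (\ref{Mvintopsimplereflect}) times the identity; that \emph{is} (\ref{Mvintopsimplereflect}), and dividing by the normalizing scalars --- which multiply to exactly the same right-hand side by (\ref{cfes}), (\ref{chineginverse}) --- yields $R_v(w_\beta,w_\beta\lambda,w_\beta\chi)\circ R_v(w_\beta,\lambda,\chi)\equiv1$, i.e.\ \ref{int:5}. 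Clause \ref{int:6} is then visible: on $\{\Re\langle\mu,\beta^\vee\rangle>-1\}$ the only possibly-singular factor of that right-hand side is $L(\langle\mu,\beta^\vee\rangle+1,\eta_v\circ\beta^\vee)$, which has an uncancelled pole precisely when $\langle\mu,\beta^\vee\rangle=-1$ and $\eta_v\circ\beta^\vee$ is trivial, because the local $L$-factor of the trivial character has a pole at $0$ both $\ell$-adically ($(1-\ell^{0})^{-1}$) and archimedeanly ($\Gamma(0)$). Finally, the normalizing scalar of a simple reflection in (\ref{Rvintop}) has, on $\{\Re\langle\mu,\beta^\vee\rangle>-1\}$, no poles (local $L$-factors never vanish, and $L(\langle\mu,\beta^\vee\rangle+1,\cdot)$ has no pole there), so $R_v(w_\beta,\mu,\eta)$ is holomorphic on that half-plane and, by \ref{int:5}, an isomorphism when $\langle\mu,\beta^\vee\rangle=0$ (and more generally unitary, hence invertible, when $\Re\langle\mu,\beta^\vee\rangle=0$).

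The remaining clauses are then assembled from these rank-one facts. For \ref{int:8}, the $i$-th rank-one factor of $R_v(w,\lambda,\chi_v)$ is holomorphic once $\Re\langle\lambda,\gamma_i^\vee\rangle>-1$, which is the hypothesis; globally $R(w,\lambda,\chi)=\otimes_vR_v$ acts through finitely many factors by (\ref{Rvonspherical}), (\ref{MandRontensors}), so it is holomorphic too. For \ref{int:10}: if $\langle\lambda,\alpha^\vee\rangle=0$ for all $\alpha>0$ with $w\alpha<0$ then the analogous pairings of $w\lambda$ for $w^{-1}$ also vanish, so \ref{int:8} makes both $R_v(w,\lambda,\chi_v)$ and $R_v(w^{-1},w\lambda,w\chi_v)$ holomorphic, and \ref{int:4} with $w^{-1}w=e$ exhibits them as mutually inverse. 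For \ref{int:9}: in the range (\ref{Mdefrange}) the scalar $c_v(w,\lambda,\chi_v)$ is finite and non-zero, so $R_v=c_v^{-1}M_v\neq0$ by \ref{int:1}; and \ref{int:12} holds at any point where each local factor is non-zero since a tensor of non-zero operators is non-zero. For \ref{int:13}: the hypothesis forces $\mu_i=\lambda$ and $\eta_i=\chi$, while $M(w_{\beta_i},\lambda,\chi)=-R(w_{\beta_i},\lambda,\chi)$ by (\ref{MRcidentity}), (\ref{cwdef}) and $c(0)=-1$ (Lemma~\ref{lem:Lfunctionproperties}(1)); since the rank-one principal series induced from the trivial character is irreducible at the unitary-axis point $\langle\lambda,\beta_i^\vee\rangle=0$, the normalizer $R(w_{\beta_i},\lambda,\chi)$ is a scalar involution fixing the spherical line, hence the identity, so $M(w,\lambda,\chi)=(-1)^\ell\,\mathrm{Id}$. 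For \ref{int:11}: with $\Re\lambda$ dominant, let $\Theta$ be the simple roots on whose coroots $\Re\lambda$ vanishes and factor $w=xy$, $y\in W_\Theta$, $\ell(w)=\ell(x)+\ell(y)$, $x$ of minimal length in $xW_\Theta$; then $R_v(y,\lambda,\chi_v)$ is an isomorphism (its rank-one constituents pair with coroots inside the $\Theta$-subsystem, on which $\Re\lambda$ vanishes, so each is holomorphic and unitary on the imaginary axis, hence invertible --- \ref{int:10} and its standard extension), whereas every pairing of $y\lambda$ relevant to $x$ has strictly positive real part (roots $\delta>0$ with $x\delta<0$, and their images under $y^{-1}$, lie outside the $\Theta$-subsystem, so the relevant coroots involve a simple coroot off $\Theta$ --- on which $\Re\lambda$ is strictly positive --- with positive coefficient), so $R_v(x,y\lambda,y\chi_v)\neq0$ by \ref{int:9}; composing gives $R_v(w,\lambda,\chi_v)\neq0$, and tensoring gives \ref{int:12}. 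Finally \ref{int:14} is soft: each $\ker R_v(w,\lambda,\chi_v)$, $w\in W$, is a proper subspace of $I_v(\lambda,\chi_v)$ by \ref{int:11}, a complex vector space is not a finite union of proper subspaces, so some $f_v$ avoids them all; taking such $f_v$ at the finitely many ramified places and $f_v=f^\circ_{v,\lambda,\chi_v}$ (non-zero, and fixed by every $R_v(w,\lambda,\chi_v)$ by (\ref{Rvonspherical}), valid at $\lambda$ by \ref{int:8}) elsewhere, the pure tensor $f=\otimes_vf_v$ has $R(w,\lambda,\chi)f=\otimes_vR_v(w,\lambda,\chi_v)f_v\neq0$ for every $w$.

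The step I expect to be the main obstacle is exactly the passage from the open cone of absolute convergence to its closure --- holomorphy in \ref{int:8} down to $\Re\langle\lambda,\alpha^\vee\rangle=-1$, and above all non-vanishing in \ref{int:11} on the whole closed dominant cone --- since a naive deformation only produces a limit of non-zero operators, which could degenerate on the boundary. The resolution is to lean on the explicit rank-one normalization, which pins down exactly where the $L$-factors contribute poles and zeros (so that the normalized rank-one operators are genuinely holomorphic on $\{\Re\langle\mu,\beta^\vee\rangle>-1\}$), together with a reduced word for $w$ adapted to the face of the dominant cone on which $\Re\lambda$ lies: the ``dangerous'' rank-one constituents are then precisely the ones that \ref{int:10} (and its imaginary-axis unitarity) certifies as isomorphisms, and what is left is governed by the convergent integral through \ref{int:9}.
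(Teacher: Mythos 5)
Your proposal follows the same overall architecture as the paper's proof: import \ref{int:2} and \ref{int:3} from the literature, prove \ref{int:1} by a support-shrinking argument, reduce the analytic clauses \ref{int:5}--\ref{int:8} and \ref{int:13} to rank-one ($SL_2$) computations, and derive \ref{int:9}--\ref{int:12} and \ref{int:14} from those. Two of your sub-arguments are perfectly valid alternatives to the paper's: in \ref{int:10} you compose $R_v(w,\lambda,\chi_v)$ directly with $R_v(w^{-1},w\lambda,w\chi_v)$ rather than factoring $w$ into simple reflections as the paper does, and in \ref{int:14} you replace the paper's ``factor through $R_v(w_{\mathrm{long}},\lambda,\chi_v)$'' trick by the fact that a complex vector space is not a finite union of proper subspaces. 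Both work.

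There is, however, a genuine gap in your derivation of \ref{int:4}. Proposition~\ref{prop:cw1w2} is a \emph{global} cocycle identity, and its proof rests essentially on the global functional equation (\ref{cfes}), $c(s,\xi)\,c(-s,\xi)=1$. The corresponding \emph{local} identity fails: for unramified nonarchimedean $\xi_v$ one computes
\begin{equation*}
  c_v(s,\xi_v)\,c_v(-s,\xi_v) \ = \ \frac{(1-\ell^{-s-1})(1-\ell^{s-1})}{(1-\ell^{-s})(1-\ell^{s})} \ \neq\ 1,
\end{equation*}
so the local normalizer $c_v(w,\lambda,\chi_v)$ does \emph{not} satisfy the cocycle relation when $\ell(w_1w_2)<\ell(w_1)+\ell(w_2)$. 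For such $w_1,w_2$ the unnormalized local operators $M_v$ themselves fail (\ref{langlandslemma}) (the integral definition only gives it when lengths add), and the entire content of the Langlands--Shahidi normalization is that the failure of the $M_v$-cocycle and the failure of the $c_v$-cocycle cancel exactly, so that $R_v$ satisfies the cocycle for all $w_1,w_2$. That is precisely Arthur's property $(R_2)$, which the paper cites for \ref{int:4}; it cannot be obtained by formally dividing \ref{int:3} by Proposition~\ref{prop:cw1w2} ``place by place.'' To repair your argument you should either cite \cite[property $(R_2)$]{Arthur-IntOpI} (or \cite[(3.4)]{Shahidi-local}) as the paper does, or else prove \ref{int:5} first (which you do establish independently by the direct rank-one computation) and then deduce \ref{int:4} from it together with the reduced-word case and the braid relations --- but that inverts the order in which you present things. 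A related misattribution appears in your proof of \ref{int:5}: the two normalizing scalars multiply to the right-hand side of (\ref{Mvintopsimplereflect}) \emph{by the definition} (\ref{cvdef}) of $c_v$, not by (\ref{cfes}) (which, as above, fails locally); your conclusion for \ref{int:5} is nonetheless correct.
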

\begin{proof}

\ref{int:1} At $g=w$, formula  (\ref{Mvdef}) can be written as
\begin{equation}\label{intoveru-}
  [M_v(w,\lambda,\chi_v)f_v](w) \ \ = \ \ \int_{U_{-}(\Q_v)}f_v(u_{-})\,du_{-}\,,
\end{equation}
with $u_{-}\in U_{-}=w^{-1}Nw\cap N_{-}$. Using the Iwasawa decomposition  write $u_{-}=b(u_{-})k(u_{-})$, with $b(u_{-})\in B(\Q_v)$ and $k(u_{-})\in K_v$,   so that the integrand is
\begin{equation}\label{fu-}
  f_v(u_{-}) \ \ = \ \  |b(u_{-})|_{v}^{\lambda+\rho}\,\left[\chi_v(b(u_{-})) \,f_v(k(u_{-}))\right].
\end{equation}
 Recall from (\ref{KTcompatlocal}) that this  expression is well-defined independently of the choice of Iwasawa decomposition, since $k(u_{-})$ is determined   up to left-multiplication by elements of $B(\Q_v)\cap K_v$.

  At $u_{-}=e$ both $f_v(u_{-})$ and the bracketed expression in (\ref{fu-}) take  the value $f_v(e)$.
 Writing $k(u_{-})=b(u_{-})^{-1}u_{-}$ shows that $u_{-}$ is the $N_{-}(\Q_v)$-factor of the
 ``LU'' (or more accurately, ``UL'') decomposition
  of $k(u_{-})$; in particular, such decompositions are given by algebraic formulas which guarantee that
  the bijective correspondence $u_{-}\leftrightarrow k(u_{-})$ between $U_{-}(\Q_v)$ and its $k(\cdot)$-image in $(B(\Q_v)\cap K_v)\backslash K_v$   is   continuous in both directions.  Furthermore, for any open neighborhood $O_B$ of the identity in $B(\Q_v)$ satisfying $O_B=O_B\cdot (B(\Q_v)\cap K_v)$, there exists some open neighborhood $O_K=(B(\Q_v)\cap K_v)O_K$ of the identity in $(B(\Q_v)\cap K_v)\backslash K_v$  such that $k(u_{-})\in O_K\Longrightarrow b(u_{-})\in O_B$.

Suppose first that $v<\infty$.  Take $O_B$ sufficiently small so that $b^\alpha \in \Z_v^*$ for each $\alpha\in \Sigma$ and $b\in O_B$, and consider local flat sections $f_v$ whose restriction to $K_v$ is  supported in the set $O_K$ above.  Then the factor $|b(u_{-})|_v^{\lambda+\rho}$ in (\ref{fu-}) equals 1 on the range of support of $f_v(u_{-})$ and $[M_v(w,\lambda,\chi_v)f_v](w)$ is independent of $\lambda$.  However, the integral (\ref{intoveru-}) cannot vanish for all $\lambda$ and  all $f_v$ with such small support (as can be seen, for example, by shrinking the support to a point or using a positivity argument).

For $v=\infty$ this argument requires technical modifications.  The intertwining operators $M_v(w,\lambda,\chi_v)$ extend continuously   to the   completion $I^\infty(\lambda,\chi_v)$ of $I(\lambda,\chi_v)$ consisting of all smooth functions satisfying (\ref{KTcompatlocal}) \cite{vogan-wallach,Wallach} -- and even furthermore to its distributional completion \cite{Casselman-smooth}.  Were   $M_v(w,\lambda,\chi_v)$ identically zero on $I(\lambda,\chi_v)$,  it would also vanish identically on $I^\infty(\lambda,\chi_v)$ and hence on its distributional completion.  However, (\ref{intoveru-}) evaluates to 1 when $f_v$ is replaced by a delta function at $e$.

\ref{int:2} For the global operators $M(w,\lambda,\chi)$, see   \cite[Prop II.1.6(iv)]{MW}; for the local operators $M_v(w,\lambda,\chi_v)$, see
\cite[Theorem 2.2]{Schiffmann} (in the archimedean
case) and \cite[Remark 6.4.5]{Casselman} (in the nonarchimedean
case).
   Since $c(w,\lambda,\chi)$ and $c(w,\lambda,\chi_v)$ are meromorphic by Lemma~\ref{lem:Lfunctionproperties} and (\ref{cvwdef})-(\ref{cwdef}), the assertions for $R(w,\lambda,\chi)$ and $R(w,\lambda,\chi_v)$ follow from those for $M(w,\lambda,\chi)$ and $M_v(w,\lambda,\chi_v)$.

\ref{int:3} See \cite[Theorem IV.1.10(b)]{MW}.

\ref{int:4} This is a result of Arthur \cite[property ($R_2$), p.~28] {Arthur-IntOpI}) (see also earlier work of Shahidi \cite[(3.4)]{Shahidi-local}).

\ref{int:5} Since  $R_v(e,\lambda,\chi_v)$ is the trivial operator, this follows from part~\ref{int:4}, (\ref{Rvintop}), and (\ref{chineginverse}).

\ref{int:6} This follows from (\ref{Lvdef}), (\ref{epsilonvdef}), and part 5).

\ref{int:7} This is a result of \cite{Winarsky} (in the nonarchimedean case) and \cite[p.~110]{Shahidi} (in the archimedean case).

\ref{int:8} Consider (\ref{Rvintop}).   The factor $\epsilon(\langle \lambda,\alpha^\vee\rangle,\chi_v\circ\alpha^\vee)$ is entire and nonvanishing (see (\ref{epsilonvdef})), while the numerator $L(\langle \lambda,\alpha^\vee\rangle+1,\chi_v\circ\alpha^\vee)$ is holomorphic for $\Re\langle \lambda,\alpha^\vee\rangle > -1$ (see (\ref{Lvdef})).  The holomorphy of $R_v$ then follows from part 7).  The global statement follows from   (\ref{MandRontensors}), in which $R_v(w,\lambda,\chi_v)$ acts nontrivially for only finitely many $v$.

\ref{int:9} This follows from property~\ref{int:1} and the fact $c_v(w,\lambda,\chi_v)$ is nonzero and holomorphic in this range (cf. \cite[p.~29, property $(R_7)$]{Arthur-IntOpI}).

\ref{int:10} First consider $w=w_\alpha$, $\alpha \in\Sigma$.  Then by part~\ref{int:8} $R_v(w_\alpha,\lambda,\chi_v)$ is holomorphic at $\lambda$ for which $\langle \lambda,\alpha^\vee\rangle =0$.  Since $\langle w_\alpha\lambda,\alpha^\vee\rangle = - \langle \lambda,\alpha^\vee\rangle$ and $w_\alpha\chi_v=\chi_v$, both $R_v(w_\alpha,\lambda,\chi_v)$ and $R_v(w_\alpha,w_\alpha\lambda,w_\alpha\chi_v)$ are holomorphic at these $\lambda$.  According to property \ref{int:4} (e.g., as it is used in proving property \ref{int:5}), the composition of these operators is \textcolor{blue}{a scalar multiple of} the identity.  Thus each is an isomorphism.

In general, if   $w$ is written as the reduced word $w=w_{\beta_1}\cdots w_{\beta_\ell}$,  $\beta_1,\ldots,\beta_\ell\in\Sigma$, property~\ref{int:4} implies the factorization
\begin{multline}\label{Rvfactorintosimple}
   R_v(w,\lambda,\chi_v) \ \ = \ \ R_v(w_{\beta_1},w_{\beta_2}\cdots w_{\beta_\ell}\lambda,w_{\beta_2}\cdots w_{\beta_\ell}\chi_v)\circ \\
    R_v(w_{\beta_2},w_{\beta_3}\cdots w_{\beta_\ell}\lambda,w_{\beta_3}\cdots w_{\beta_\ell}\chi_v)\circ
    \cdots
    \circ R_v(w_{\beta_{\ell}},\lambda,\chi_v)\,,
\end{multline}
\textcolor{blue}{holds up to a scalar multiple}.
Each of these factors is an isomorphism, since $w_{\beta_j}$ is a simple reflection and $\langle w_{\beta_{j+1}}\cdots w_{\beta_\ell}\lambda,\beta_j^\vee\rangle=
\langle \lambda,w_{\beta_\ell}\cdots w_{\beta_{j+1}}\beta_j^\vee\rangle=0$ (using the assumption together with the standard fact that $w_{\beta_\ell}\cdots w_{\beta_{j+1}}\beta_j$ is a positive root whose sign is flipped by $w$).

\ref{int:11}
 Let $\Delta_M=\{\alpha \in \Delta| \Re \langle \lambda,\alpha^\vee \rangle  =0 \}.$
 It is the root system of a standard Levi subgroup.
 Let
$W_M\subset W$ be the corresponding Weyl group,
and  factor     $w$ as $w=w_1w_2$, where   $w_1\alpha>0$ for each $\alpha\in \Delta_M$, and $w_2\in W_M$.
 Property~\ref{int:10} implies that $R_v(w_2,\lambda,\chi_v)$ is an isomorphism.

The rest follows from properties~\ref{int:4} and \ref{int:9} once
we show that property~\ref{int:9} applies to $R_v(w_1,w_2\lambda,w_2\chi_v),$ i.e., that
$\Re\la w_2\lambda, \alpha^\vee \ra > 0$
for all $\alpha > 0$ with $w_1 \alpha < 0.$
Take such a root $\alpha$ and let $\beta = w_2^{-1} \alpha.$
By definition of $w_1, \alpha \notin \Delta_M.$
Hence $\beta \notin \Delta_M$, and in fact $\beta>0$ since $W_M$ does not flip the sign of any roots outside of $\Delta_M$.
In particular, $\Re\la \lambda, \beta^\vee \ra > 0.$
Applying $w_2$ on both sides of the invariant
pairing $\la\ , \ \ra$ gives $\Re\la w_2\lambda, \alpha^\vee \ra > 0.$

\ref{int:12} This follows from (\ref{MandRontensors}) and property~\ref{int:11}.

\ref{int:13}
The hypotheses imply that $w_{\beta_i}\lambda=\lambda$.
 Since $\chi(\beta_i^\vee(\cdot))=\xi( \beta_i^\vee(\cdot)^\delta)$ is assumed to be trivial but $\xi$ is nontrivial, it must be the case that $\beta_i^\vee(\cdot)^\delta$ is an even power.  According to (\ref{corootrootinteraction}),  $\langle \delta,\beta_i^\vee\rangle\in 2\Z$, and consequently $w_{\beta_i}\delta=\delta-\langle \delta,\beta_i^\vee \rangle \beta_i\equiv\delta\pmod{2\Lambda_{\on{wt}}}$.  It follows from the assumption $\chi=\xi\circ\delta$ that   $w_{\beta_i}\chi=\chi$, since the ratio of these last two characters is the square of a quadratic character.

   Each factor of $M(w,\lambda,\chi)$ in the composition formula \eqref{Mwlengthell} then has the form
 $M(w_{\beta_j}, \lambda,  \chi)$
  for some $j=1,\ldots,n$.    Thus the assertion reduces to the special case that $\ell=1$ and   $w$ is a simple reflection, say $w=w_\beta$ for some   $\beta\in\Sigma$.  Taking further into account (\ref{MRcidentity}) and the fact that
\begin{equation}\label{int13b}
 c(w_\beta,\lambda,\chi) \ \ = \ \ c(\langle \lambda,\beta^\vee\rangle,\chi\circ\beta^\vee) \ \  = \ \
 c(0,\chi\circ\beta^\vee) \ \ = \ \ -1
\end{equation}
by (\ref{cwdef}) and part 1) of Lemma~\ref{lem:Lfunctionproperties}, it suffices to show that $R(w_\beta,\lambda,\chi)\equiv 1$.  (Recall by property~\ref{int:8} that $\lambda$ is in the domain of holomorphy for $R(w_\beta,\cdot,\chi)$ and each $R_v(w_\beta,\cdot,\chi)$, $v\le \infty$.)  We will accordingly complete the proof by showing that
\begin{equation}\label{sufficestoshowthis}
 [R_v(w_\beta,\lambda,\chi_v)f_v](g) \ \ = \ \ f_v(g)\,,
\end{equation}
  for any $v\le\infty$ and $f_v(g)\in I(\lambda,\chi_v)$.  In light of the transformation law (\ref{Inuspace}) it suffices to verify this for $g\in K_v$ and, upon right translation, in fact merely at $g=e$.

Let $\Psi_\beta:SL_2 \rightarrow G$ denote the algebraic map sending $\ttwo 1x01\mapsto u_\beta(x)$ and $\ttwo 10x1\mapsto u_{-\beta}(x)$.
 According to (\ref{Mvdef}), if $f_v\in I(\mu,\chi_v)$ is a flat section one has
 \begin{equation}\label{int13bc}
   [M_v(w_\beta,\mu,\chi_v)f_v](e) \ \ = \ \ \int_{\Q_v} f_v\(\Psi_\beta\ttwo{0}{-1}{1}{x}  \)\,dx
 \end{equation}
 in the range of convergence  $\langle \mu,\beta^\vee \rangle >0$.  Let $n_xa_xk_x$ be an $SL(2,\Q_v)$-Iwasawa decomposition for $\ttwo{0}{-1}{1}{x}$, so that the integral is $\int_{\Q_v}\chi_v(\Psi_\beta(a_x))|\Psi_\beta(a_x)|_v^{\mu+\rho}f_v(\Psi_\beta(k_x))dx$ (see (\ref{lambdaCaction})).  The integrand simplifies to $|\Psi_\beta(a_x)|_v^{\mu+\rho}f_v(\Psi_\beta(k_x))$ since $\chi\circ \beta^\vee$ is assumed to be trivial.  The first factor comes from the embedded $SL_2$,  and can be written as $|\Psi_\beta(a_x)|_v^{\mu+\rho}=|\Psi_\beta(a_x)|_v^{(\langle \mu,\beta^\vee\rangle +1)\varpi_\beta}$,
 where $\varpi_\beta$ is the fundamental weight associated to $\beta$.  We write this expression using $SL_2$ matrices as $|d(a_x)|^{(\langle \mu,\beta^\vee\rangle +1)}_v$, where $a_x=\ttwo{d(a_x)}{0}{0}{d(a_x)^{-1}}$.

By assumption $\lambda$ lies in the hyperplane defined by  $\langle \mu,\beta^\vee \rangle =0$, which  is within the domain of holomorphy for the
   normalized intertwining operator $R_v(w_\beta,\mu,\chi_v)$ but on the boundary of the range of absolute convergence for the integral (\ref{int13bc}) defining $M_v(w_\beta,\mu,\chi_v)=c_v(\langle \mu,\beta^\vee\rangle,\chi_v\circ \beta^\vee)R_v(w_\beta,\mu,\chi_v)$.  We thus need to prove
 \begin{equation}\label{int13c}
   f_v(e) \ \ = \ \ \lim_{s\to 0} c_v(s,\chi_v\circ\beta^\vee)^{-1}\,\int_{\Q_v}
   |d(a_x)|^{s+1}f_v(\Psi_\beta(k_x))
   \,dx\,.
 \end{equation}
At this point, the assertion reduces to a calculation of a particular intertwining operator, namely one sending  the spherical principal series $I(0,\chi_{\on{triv},v})$ to itself  for the group $G(\Q_v)=SL(2,\Q_v)$, on the flat section $f_v^\beta(g)= f_v(\Psi_\beta(g))$.  It is a consequence of (\ref{Rvonspherical}) (which is itself the standard calculation of the ``Gindikin-Karpelevich integral'') that (\ref{int13c}) holds for  spherical vectors, i.e., when $f_v^\beta$ is constant on the maximal compact subgroup $K_v=SL(2,\Z_v)$ (for $v<\infty$) or $SO(2)$ (for $v=\infty$) of $SL(2,\Q_v)$.  Since $I(0,\chi_{\on{triv},v})$ is irreducible for $\chi_{\on{triv},v}$ trivial, the result holds on the full space.\footnote{Alternatively, one can argue directly that since $c_v(0,\chi_v\circ\beta^\vee)=0$, the limit on the right-hand side of (\ref{int13c}) is determined by the residue of the integral at $s=0$, whose value is itself influenced only by the values of $f_v(\Psi_\beta(k_x))$ for $|x|_v$ large -- that is, by $f(e)$.  The Gindikin-Karpelevich integral shows the residue has the desired value on spherical flat sections, and a smoothness argument shows that $\int_{\Q_v}|\Psi_\beta(a_x)|^{s+1}(f_v(\Psi_\beta(k_x))-f_v(e))dx$ is holomorphic at $s=0$.}

\ref{int:14} Let $w_{\text{long}}$ denote the long element of the Weyl group $W$.  By property~\ref{int:11}, $R_v(w_{\text{long}},\lambda,\chi_v)$   is not the zero operator, so there exists some vector $f_v\in I_v(\lambda,\chi_v)$ not in its kernel.
We shall now argue that  $R_v(w_{\text{long}},\lambda,\chi_v)$
factors through
$R_v(w,\lambda,\chi_v)$ for any $w\in W$.  Indeed, setting
  $w'=w_{\text{long}}w^{-1}$,   property~\ref{int:4} implies
\begin{equation}\label{prop5inprop14pf}
   0 \ \ \not\equiv \ \  R_v(w_{\text{long}},\lambda,\chi_v)f_v \ \ = \ \  R_v(w',w\lambda,w\chi_v)\left(R_v(w,\lambda,\chi_v)f_v\right).
\end{equation}
If $\beta$ is a positive root such that $w'\beta<0$, then $w^{-1}\beta=w_{\text{long}}^{-1}(w'\beta)$ is a positive root.  In particular, $\Re\langle w\lambda,\beta^\vee\rangle =  \Re\langle \lambda,w^{-1}\beta^\vee\rangle \ge 0$ by our assumption that $\Re{\lambda}$ is dominant.  Property~\ref{int:8} then asserts $R_v(w',\mu,w\chi_v)$ is holomorphic at $\mu=w\lambda$.  Thus $R(w,\lambda,\chi_v)f_v$ cannot vanish.  The global statement is then a consequence of (\ref{MandRontensors}).

\end{proof}

Recalling the isomorphism between $I(\lambda,\chi)$ and $\Ind_{B(\A) \cap K}^{K} \chi$  from the end of Section~\ref{sec:induced}, the
intertwining operators (\ref{Mwlchisends})
induce maps
\begin{equation}\label{smallmlamchimaps}
\mathbf{m}(w, \lambda,\chi): \Ind_{B(\A) \cap K}^K \chi  \ \ \longrightarrow \ \
\Ind_{B(\A) \cap K}^K w\chi
\end{equation}
by restriction of flat sections to $K$.  Similarly, we have   local analogs
\begin{equation}\label{smallmlamchimapsv}
\mathbf{m}_v(w, \lambda,\chi_v): \Ind_{B(\Q_v) \cap K_v}^{K_v} \chi_v  \ \ \longrightarrow \ \
\Ind_{B(\Q_v) \cap K_v}^{K_v} w\chi_v
\end{equation}
of (\ref{Mvdef}) induced by restriction to $K_v$.  One again has $\mathbf{m}(w,\lambda,\chi)=\otimes_{v\le\infty}\mathbf{m}_v(w,\lambda,\chi_v)$, and both operators act on any particular vector by finite-dimensional matrices whose entries are meromorphic functions of $\lambda$.  Similarly restricting $R(\cdot,\cdot,\cdot)$  and $R_v(\cdot,\cdot,\cdot)$   gives operators $\mathbf{r}(\cdot,\cdot,\cdot)$ and ${\mathbf r}_v(\cdot,\cdot,\cdot)$, in direct  analogy to (\ref{smallmlamchimaps}) and  (\ref{smallmlamchimapsv}).

  Let $f=\otimes_{v\le \infty} f_v\in I(\lambda,\chi)$ be a flat section unramified outside a finite set of places $S$ (thus $f_v|_{K_v}\equiv 1$ for $v\not\in S$).
    Then by (\ref{MandRontensors}) we have
     \begin{equation}\label{factorization of int op for r}
 \mathbf{r}(w, \lambda,\chi)f \ \ = \ \
   \left[
 \prod_{v\in S}\mathbf{r}_v(w, \lambda,\chi_{v})f_{v}\right]\cdot
\left[ \prod_{v \notin S} f_v\right].
\end{equation}
If $\beta$ is a positive simple root, then
\begin{equation}\label{factorsthroughcorootall}
\langle \lambda,\beta^\vee \rangle \ = \
\langle \lambda',\beta^\vee \rangle \ \ \Longrightarrow \ \ \begin{matrix}
  \ \ \ \ \ \ \mathbf{m}(w_\beta,\lambda,\chi) \ = \ \mathbf{m}(w_\beta,\lambda',\chi)\,,
 \\  \ \ \ \  \ \ c(w_\beta,\lambda,\chi) \ = \ c(w_\beta,\lambda',\chi)\,,
\\
\text{and}\, \ \mathbf{r}(w_\beta,\lambda,\chi) \ = \ \mathbf{r}(w_\beta,\lambda',\chi)\,;
\end{matrix}
\end{equation}
the statement for $\mathbf{m}$ follows from (\ref{Mwsimple}), the statement for $c$ is part of its definition (\ref{cvwdef}), and the statement for $\mathbf{r}$ is then a consequence of (\ref{MRcidentity}).
Because it factors through the value of $\langle \lambda,\beta^\vee\rangle$, we will sometimes write $\mathbf{r}(w_\beta,\lambda,\chi)$ as $\mathbf{r}(w_\beta,\langle \lambda,\beta^\vee \rangle,\chi)$.
As an illustration, taking the tensor product over all $v\le \infty$ of equation (\ref{Rvfactorintosimple}) shows that
\begin{multline}\label{rvfactorintosimple}
   \mathbf{r}(w,\lambda,\chi) \ \ = \ \ \mathbf{r}(w_{\beta_1},w_{\beta_2}\cdots w_{\beta_\ell}\lambda,w_{\beta_2}\cdots w_{\beta_\ell}\chi)\circ   \mathbf{r}(w_{\beta_2},w_{\beta_3}\cdots w_{\beta_\ell}\lambda,w_{\beta_3}\cdots w_{\beta_\ell}\chi)\circ
    \cdots
    \circ \mathbf{r}(w_{\beta_{\ell}},\lambda,\chi) \\ = \ \
\mathbf{r}(w_{\beta_1},\langle \lambda, w_{\beta_\ell}\cdots w_{\beta_2}\beta_1^\vee \rangle ,w_{\beta_2}\cdots w_{\beta_\ell}\chi)\circ  \mathbf{r}(w_{\beta_2},\langle \lambda, w_{\beta_\ell}\cdots w_{\beta_3}\beta_2^\vee \rangle ,w_{\beta_3}\cdots w_{\beta_\ell}\chi)\\
   \circ  \cdots
    \circ \mathbf{r}(w_{\beta_{\ell}},\langle \lambda,\beta_\ell^\vee \rangle ,\chi) \\
\end{multline}
is the composition
of ${\mathbf r}$-operators for simple reflections $w_{\beta_j}$.

\section{Langlands' square integrability condition}

The individual summands in Langlands' constant term formula \eqref{eq: constantterm} have growth rates in the positive Weyl chamber of $T(\R)$ determined by $\Re(w\lambda)\in {\frak a}_{\R}^*:=\Lambda_{\on{wt}}\otimes\R$.  Via meromorphic continuation and Laurent expansions, constant terms can be defined for leading terms  near poles or zeros of $E(f,\lambda,g)$.  Namely, suppose that $f$ is a flat section and that
$E(f,\lambda_0+\epsilon \mu,\cdot)$ has a zero of order $n$ at $\epsilon =0$ (with the convention that $n$ is negative if there is a pole), where
  $\lambda_0,\mu\in{\frak a}_{\C}^*$ are as of yet unspecified.  Then
\begin{multline}\label{limconstterm}
\qquad\qquad \lim_{\epsilon\rightarrow0}\epsilon^{-n}E(f,\lambda_0+\epsilon \mu,g) \text{~ is a nonzero automorphic form with constant term } \\  \lim_{\epsilon\rightarrow0} \sum_{w\in W}\epsilon^{-n}[M(w,\lambda_0+\epsilon \mu,\chi)f](g,\lambda_0+\epsilon \mu).\qquad\qquad\qquad
\end{multline}
It is important to note that the individual terms in this $w$-sum might not have limits as $\epsilon\rightarrow 0$.

The values of $\lambda_0$ of interest in this paper are the dominant integral weights  listed in Table~\ref{fig:biglistofcases}.  The character $\chi=\chi_0$ is also described there as the composition (\ref{globalcompwithxi}) of some fixed  Dirichlet character $\xi$ from (\ref{charset}) with the algebraic character defined by $\delta_0\in \Lambda_{\on{wt}}$.
 Let
\begin{equation}\label{WsubLdef}
W_L \ \ := \ \ \{ w  \in W\,|\,w\lambda_0=\lambda_0 \ \text{~and~} \ w\delta_0\equiv\delta_0\!\!\!\!\pmod{2\Lambda_{\on{wt}}}\}
 \end{equation}
 and
\begin{equation}\label{SigmaLdef}
\Sigma_L \ \ = \ \ \{\alpha_i\in\Sigma\,|\,\langle \lambda_0,\alpha_i^\vee\rangle =0 \ \text{~and~} \
\langle \delta_0,\alpha_i^\vee\rangle\,\in\,2\Z\}\,;
\end{equation}
  direct computation shows that these $i$  correspond to the common zero positions in the last two columns of Table~\ref{fig:biglistofcases}.  It is straightforward to verify that --   with the sole exception of the $(E_7^{sc},{\frak{sl}}_8)$ entry --  $W_L$ is the Weyl group of the standard Levi subgroup with simple roots $\Sigma_L$.   Since the $(E_7^{sc},{\frak{sl}}_8)$ case is different, it will be handled using the {\tt atlas} software and thus the following analysis does not apply to it (see the beginning of Section~\ref{sec:proofofThm4.3}).

In the remainder of this section is devoted to giving a    criteria for the square-integrability of  $ \lim_{\epsilon\rightarrow0}\epsilon^{-n}E(f,\lambda_0+\epsilon \mu,g)$ over $g\in G(\Q)\backslash G(\A)$.  
Let
\begin{equation}\label{WsuperLkostant}
\text{$W^L$ denote Kostant coset representatives for $W/W_L$,}
\end{equation}
i.e., representatives which preserve the positivity of each root in $\Sigma_L$.
Similarly to   \cite[(2.4)]{M}, group   the constant term of $E(f,\lambda_0+\epsilon \mu,g)$ according to the $W$-orbit of $\lambda_0$ as
\begin{equation}
\label{eq:constanttermgroupedbymu}
\sum_{\mu_0 \in W\! \lam_0} \sum_{\srel{w \in W}{ w\lam_0 = \mu_0}}
[M(w,\lambda_0+\epsilon \mu,\chi)f](g,\lambda_0+\epsilon \mu) \ \ = \ \
   \sum_{\mu_0 \in W \! \lam_0} \sum_{k\,\ge\,n} \epsilon^k C(\mu_0,k,f,g)\,,
   \end{equation}
 where the dependence of $C(\mu_0,k,f,g)$ on $\mu$   has been omitted from the notation for brevity.
  The sum over $k$ in (\ref{eq:constanttermgroupedbymu}) represents the Laurent series expansion in $\epsilon$ of
  the inner sum  on the left hand side, and again
\begin{equation}\label{ndef}
 \text{ $n$ is the least integer such that $C(\mu_0,n,f,\cdot)\not\equiv 0$ for some $\mu_0 \in W\lambda_0$}
\end{equation}
(in practice, many of the individual terms $C(\mu_0,k,f,g)$  vanish).
The leading coefficient $\lim_{\epsilon\rightarrow 0}\epsilon^{-n} E(f,\lambda_0+\epsilon \mu,g)$ in the Laurent expansion of $E(f,\lambda_0+\epsilon \mu,g)$ in $\epsilon$ is an automorphic form with constant term
\begin{equation}\label{constanttermofresidue}
   \sum_{\mu_0 \in W\lambda_0}  C(\mu_0,n,f,g)\,.
\end{equation}
Langlands  showed that $\lim_{\epsilon\rightarrow 0}\epsilon^{-n} E(f,\lambda_0+\epsilon \mu,g)$ is square integrable over $g\in G(\Q)\backslash G(\A)$ if and only if
\begin{equation}\label{langlandscriteria}
  \langle \mu_0,\varpi_i\rangle \ \ < \ \ 0\,, \ \ \ \ i=1,\ldots,r\,,
\end{equation}
for any $\mu_0 \in W\lambda_0$  such that $C(\mu_0,n,f,g)$  in (\ref{constanttermofresidue}) is nonzero \cite[Lemma~I.4.11]{MW}.
In particular, suppose there exists an integer $m$ (necessarily at least $n$) such that
\begin{equation}\label{ourcriteria1}
  \aligned
  \text{i)} & \ \ C(\mu_0,m,f,\cdot) \, \not\equiv\,0 \ \ \text{for some~$\mu_0 \in W\lam_0$ satisfying (\ref{langlandscriteria}), and} \\
  \text{ii)} & \ \ C(\mu_0,m',f,\cdot) \,\equiv\,0 \ \ \text{for all $m'\le m$ and all $\mu_0 \in W\lam_0$ not satisfying (\ref{langlandscriteria})}.
  \endaligned
\end{equation}
Then $\lim_{\epsilon\rightarrow 0}\epsilon^{-n} E(f,\lambda_0+\epsilon \mu,\cdot)\in L^2(G(\Q)\backslash G(\A))$.

Let $W_{\lam_0}$ denote the stabilizer of $\lam_0$ in $W$, which we recall contains $W_L$.
Fixing some $w_{\mu_0} \in W$   with $w_{\mu_0} \lam_0 = \mu_0,$
the inner sum on the left-hand side of (\ref{eq:constanttermgroupedbymu}) is
\begin{multline}\label{sumoverstabilizer}
\sum_{\srel{w \in W}{\ w\lam_0 = \mu_0}}
[M(w,\lambda_0+\epsilon \mu,\chi)f](g,\lambda_0+\epsilon \mu)
  \ \ = \ \  \sum_{w \in W_{\lambda_0}}
[M(w_{\mu_0} w,\lambda_0+\epsilon \mu,\chi)f](g,\lambda_0+\epsilon \mu) \\
= \ \
\sum_{\srel{w' \in W_{\lam_0}}{w_{\mu_0} w' \in W^L} }
\sum_{w \in W_L}
[M(w_{\mu_0} w'w,\lambda_0+\epsilon \mu,\chi)f](g,\lambda_0+\epsilon \mu)\,.
\end{multline}
Write the Laurent expansion of the inner sum as
\begin{equation}\label{Csharpdef}
\sum_{w \in W_L}
[M(w_{\mu_0} w'w,\lambda_0+\epsilon \mu,\chi)f](g,\lambda_0+\epsilon \mu)
 \ \ = \ \  \sum_{k \in\Z} \epsilon^k \,C^\sharp(w_{\mu_0} w', k,f,g)\,,
\end{equation}
where the dependence of $C^\sharp(w_{\mu_0} w', k,f,g)$ on $\lambda_0$ and $\mu$ has been omitted from the notation for brevity.
\begin{lem}
In each of the cases of Table~\ref{fig:biglistofcases} aside from the $(E_7^{sc},{\frak{sl}}_8)$ entry,
 $  C(\mu_0, k, f,\cdot)$ vanishes identically   if and only if
 $ C^\sharp(w_{\mu_0} w', k,f,\cdot)$ vanishes identically for each
 $w' \in W_{\lam_0}\cap w_{\mu_0}^{-1}W^L$.
\end{lem}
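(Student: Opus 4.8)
The plan is to separate the summands $C^\sharp(w_{\mu_0}w',k,f,\cdot)$ that make up $C(\mu_0,k,f,\cdot)$ --- see \eqref{eq:constanttermgroupedbymu}, \eqref{sumoverstabilizer}, and \eqref{Csharpdef} --- according to the character by which they transform under left translation by $B(\A)$. One implication of the lemma is immediate: since $C(\mu_0,k,f,g)$ is, by these formulas, the sum of the $C^\sharp(w_{\mu_0}w',k,f,g)$ over the finitely many $w'\in W_{\lambda_0}\cap w_{\mu_0}^{-1}W^L$, the identical vanishing of every $C^\sharp(w_{\mu_0}w',k,f,\cdot)$ forces $C(\mu_0,k,f,\cdot)\equiv 0$. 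The substance is the converse.

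First I would record the $B(\A)$-equivariance of each $C^\sharp(w_{\mu_0}w',k,f,\cdot)$. For $w\in W_L$ one has $w\lambda_0=\lambda_0$ by \eqref{WsubLdef}, and also $w\chi=\chi$: indeed $w\chi=\xi\circ w\delta_0$ by \eqref{wchiwdelta}, and $w\delta_0\equiv\delta_0\pmod{2\Lambda_{\on{wt}}}$ together with the fact that $\xi$ is quadratic gives $\xi\circ w\delta_0=\xi\circ\delta_0=\chi$. Consequently $M(w_{\mu_0}w'w,\lambda_0+\epsilon\mu,\chi)f$ lies in $I(\mu_0+\epsilon\,(w_{\mu_0}w'w)\mu,\;w_{\mu_0}w'\chi)$ for every $w\in W_L$. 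Using \eqref{globalprincseries} and \eqref{logbdef} to write $|b|^{\epsilon(w_{\mu_0}w'w)\mu}=\exp(\epsilon\langle(w_{\mu_0}w'w)\mu,\log|b|\rangle)$, expanding this in powers of $\epsilon$, and collecting the coefficient of $\epsilon^k$ --- legitimate because by property~\ref{int:2} of Theorem~\ref{newintertwintheorem} the intertwining operators are meromorphic in $\lambda$, hence of finite pole order along $\epsilon\mapsto\lambda_0+\epsilon\mu$, so only finitely many terms contribute --- one obtains
\[
C^\sharp(w_{\mu_0}w',k,f,bg)\;=\;(w_{\mu_0}w'\chi)(b)\;|b|^{\mu_0+\rho}\;P_{w'}(\log|b|\,;\,g)\,,\qquad b\in B(\A)\,,
\]
where $P_{w'}(\,\cdot\,;g)$ is a polynomial on $\frak a$ with $P_{w'}(0;g)=C^\sharp(w_{\mu_0}w',k,f,g)$.

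Next I would verify that the quadratic characters $w_{\mu_0}w'\chi$ of $T(\Q)\backslash T(\A)$ attached to distinct $w'\in W_{\lambda_0}\cap w_{\mu_0}^{-1}W^L$ are pairwise distinct. If $w_{\mu_0}w_1'\chi=w_{\mu_0}w_2'\chi$, then arguing as above $(w_1')^{-1}w_2'$ fixes $\lambda_0$ and preserves $\delta_0$ modulo $2\Lambda_{\on{wt}}$, hence lies in $W_L$; so $w_{\mu_0}w_1'$ and $w_{\mu_0}w_2'$ represent the same coset in $W/W_L$, and therefore are equal because $W^L$ is a transversal for $W/W_L$. (This is the one point where the hypothesis excluding the $(E_7^{sc},\frak{sl}_8)$ entry enters, ensuring that $W_L$ is the Weyl group of the standard Levi with simple roots $\Sigma_L$ and hence that $W^L$ from \eqref{WsuperLkostant} is an honest transversal.) Since each $w_{\mu_0}w'\chi$ is continuous of order at most two, it is trivial on the connected subgroup $\exp(\frak a)\subset T(\A)$, so it is determined by its restriction to $T(\A)^1=\{t\in T(\A):\log|t|=0\}$; in particular the $w_{\mu_0}w'\chi$ stay pairwise distinct on $T(\A)^1$.

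Finally I would combine these facts. Assuming $C(\mu_0,k,f,\cdot)\equiv 0$, evaluate $\sum_{w'}C^\sharp(w_{\mu_0}w',k,f,\cdot)=0$ at a point $tg$ with $t=t_1\exp(H)$, $t_1\in T(\A)^1$, $H\in\frak a$, and divide by the nonzero scalar $|t|^{\mu_0+\rho}=e^{\langle\mu_0+\rho,H\rangle}$; using $(w_{\mu_0}w'\chi)(\exp H)=1$ and $\log|t|=H$ this gives $\sum_{w'}(w_{\mu_0}w'\chi)(t_1)\,P_{w'}(H;g)=0$ for all $t_1\in T(\A)^1$. For each fixed $H$ and $g$ the characters $(w_{\mu_0}w'\chi)|_{T(\A)^1}$ are distinct, so linear independence of characters forces $P_{w'}(H;g)=0$ for every $w'$; taking $H=0$ yields $C^\sharp(w_{\mu_0}w',k,f,g)=0$ for all $g$, as desired. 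The main obstacle is the bookkeeping in the second paragraph: one must check that extracting the Laurent coefficient in $\epsilon$ preserves the clean $B(\A)$-equivariance of the full family up to a polynomial-in-$\log|b|$ twist, and then that this twist is harmless because it is frozen once the character variable is restricted to $T(\A)^1$.
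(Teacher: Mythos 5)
Your proof is correct and follows essentially the same route as the paper: decompose $C(\mu_0,k,f,\cdot)$ into the pieces $C^\sharp(w_{\mu_0}w',k,f,\cdot)$, observe each transforms under $T(\A)^1$ by the quadratic character $w_{\mu_0}w'\chi$, show these characters are pairwise distinct (first on $T(\A)$ via the stabilizer $W_L$, then on $T(\A)^1$ via connectedness of the archimedean factor), and conclude by linear independence of characters. The only real difference is cosmetic: you establish the full $B(\A)$-equivariance with its polynomial-in-$\log|b|$ twist before restricting to $T(\A)^1$ (where the twist is frozen at $H=0$), while the paper restricts to $t \in T(\A)^1$ from the start so the twist never appears. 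That extra generality isn't needed, but it doesn't cause trouble.
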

\begin{proof}
Let $T(\A)^1=\{ t \in T(\A)\mid |t^\lambda|= 1, \forall  \lambda \in \Lambda_{\on{wt}}\}$, where $|\cdot|$ denotes the (global) adelic valuation.  For $t\in T(\A)^1$ we in particular have that $t^\alpha$ has   adelic valuation 1 for any root $\alpha$, and that $|t|^\lambda$ appearing in (\ref{Idef}) is trivial for all $\lambda \in {\frak a}_\C^*$.
For each $w' \in W_{\lam_0}\cap w_{\mu_0}^{-1}W^L$ we have
$$
C^\sharp(w_{\mu_0} w', k,f,tg)
 \ \ = \ \  (w_{\mu_0} w'\chi)(t)\,C^\sharp(w_{\mu_0} w', k,f,g)\, , \   t \in T(\A)^1 \ \text{and} \ g \in G(\A)\,.
$$
From its definition in (\ref{eq:constanttermgroupedbymu}), $C(\mu_0,k,f,g)$ is the coefficient of $\epsilon^k$ in the Laurent expansion of the left-hand side of (\ref{sumoverstabilizer}), so
\begin{equation}\label{Ccsharpsum}
  C(\mu_0,k,f,g) \ \ = \ \ \sum_{\srel{w' \in W_{\lam_0}}{w_{\mu_0} w' \in W^L} } C^\sharp(w_{\mu_0} w', k,f,g)
\end{equation}
and
\begin{equation}\label{Ccsharpsumt}
  C(\mu_0,k,f,tg) \   = \   \sum_{\srel{w' \in W_{\lam_0}}{w_{\mu_0} w' \in W^L} } (w_{\mu_0} w'\chi)(t)\,C^\sharp(w_{\mu_0} w', k,f,g)\,, \ \ \ t\in T(\A)^1\,.
\end{equation}
The Lemma follows once we show that the restrictions of the characters $\{ w_{\mu_0} w'\chi : w'\in W_{\lambda_0}\cap w_{\mu_0}^{-1}W^L\}$ to $T(\A)^1$ are  distinct (and hence linearly independent).
First suppose  that there is an element  $w''\in W_{\lambda_0}\cap w_{\mu_0}^{-1}W^L$
distinct from $w'$ (and hence  by (\ref{WsuperLkostant}) in different right $W_L$-cosets) for which $\chi'=w_{\mu_0} w'\chi$ and $\chi''=w_{\mu_0} w''\chi$ are equal as characters on $T(\A)$.  Then $(w')^{-1}w''$ stabilizes both $\lambda_0$ and $\chi$, whose common stabilizer was observed after (\ref{SigmaLdef}) to be $W_L$ -- a contradiction.  If the restrictions of distinct characters $\chi'$ and $\chi''$ to $T(\A)^1$ agree, then $(\chi')^{-1}\chi''$ is a nontrivial quadratic character on $T(\A)$ which is trivial on $T(\A)^1$.  Every idele is the product of a positive real number and an adele having valuation 1.  Using  the unique factorization (\ref{uniquefact}), it follows that $T(\A)$ is the product of $T(\A)^1$ and a connected subgroup of $T(\R)$.  This connectedness forces $(\chi')^{-1}\chi''$ to be constant on all of $T(\A)$, a contradiction.

\end{proof}

The Lemma motivates regrouping the constant term of $E(f,\lambda_0+\epsilon \mu,g)$   according
to $W_L$ cosets:
\begin{equation}\label{groupedconstantterm}
  \sum_{w'\in W^L} \sum_{w\,\in\,W_L}[M(w'w,\lambda_0+\epsilon \mu,\chi)f](g,\lambda_0+\epsilon \mu) \ \ = \ \
   \sum_{w'\in W^L} \sum_{k\,\ge\,n} \epsilon^k C^\sharp(w',k,f,g)\,,
\end{equation}
and the square-integrability condition for   $\lim_{\epsilon\rightarrow 0}\epsilon^{-n} E(f,\lambda_0+\epsilon \mu,g)$ given in (\ref{ourcriteria1}) is itself implied by the existence of an integer $m$ such that
\begin{equation}\label{ourcriteria2}
  \aligned
  \text{i)} & \ \ C^\sharp(w',m,f,\cdot) \, \not\equiv\,0 \ \ \text{for some~$w'\in W^L$ with $\mu_0=w' \lam_0$ satisfying (\ref{langlandscriteria}), and} \\
  \text{ii)} & \ \ C^\sharp(w',m',f,\cdot) \,\equiv\,0 \ \ \text{for all $m'\le m$ and all $w'\in W^L$ with $\mu_0=w'\lambda_0$ not  satisfying (\ref{langlandscriteria})}.
  \endaligned
\end{equation}
In the next section we will study   particular deformation directions $\mu$ and use the factorization (\ref{MRcidentity}), which allows us to leverage arithmetic information from the $c$-functions (\ref{cwdef}).

\section{Proof of Theorem~\ref{thm:unitarizablequotient}}\label{sec:proofofThm4.3}

According to the reduction to distinguished parameters in Section~\ref{sec:sub:reductiontodistinguished}, it suffices to establish the square integrability of (\ref{limconstterm}) for the cases listed in Table~\ref{fig:biglistofcases}.  In order to streamline the presentation\footnote{Our argument treats all but these four cases uniformly.  The {\tt atlas} software also can also demonstrate the unitarity of all the Arthur unipotent representations on real forms of $G_2$, $F_4$, $E_6$, and (on very large memory servers) $E_7$, but some  of the remaining $E_8$ examples appear to be well beyond its present capabilities.} we also used the  {\tt{is\_unitary}} command in {\tt{atlas}} to show the unitarity of four more cases:~the last $E_7$ case (in which $\mathcal O^\vee=E_6(a_1)$); the first $E_8$ case (in which $\mathcal O^\vee=D_5+A_2$); and the two $E_8$ cases in which $\mathcal O^\vee=E_8(a_7)$.

It follows easily from direct computation   that in each of our remaining cases from  Table~\ref{fig:biglistofcases}, $W_L$ from (\ref{WsubLdef}) is isomorphic to $(\Z/2\Z)^{\Sigma_L}$.
 We shall show the nonvanishing and square-integrability of $\lim_{\epsilon\rightarrow 0}\epsilon^{-n} E(f,\lambda_0+\epsilon \mu,g)$ from (\ref{limconstterm}).  In particular, we will show in each case that there is some value of $m$ such that  part i) of (\ref{ourcriteria2}) holds for some particular global flat section $f=\otimes_{v\le \infty}f_v$,  while also   part ii) of (\ref{ourcriteria2}) holds for all global flat sections  -- not just $f$ -- and the same value of $m$.  Infinitesimal right translation by $\frak g(\R)$
  on the image of the map $f'_\infty\mapsto \lim_{\epsilon\rightarrow0}\epsilon^{-n}E(f'_\infty\otimes\bigotimes_{v<\infty}f_v,\lambda_0+\epsilon\mu,\cdot)$ yields a
  quotient of the $(\frak{g},K)$-module $I_\infty(\lambda_0,\chi_\infty)$; the square-integrability of the image then shows that the $L^2$-inner product on the Eisenstein series gives a unitary structure on this quotient.

  Consider the inner sum in (\ref{groupedconstantterm}) evaluated at $g=bk$, where $b\in B(\A)$ and $k\in K$:
\begin{multline}\label{dominnersum1}
 \sum_{k\,\ge\,n} \epsilon^k \, C^\sharp(w',k,f,bk) \ \ = \ \
\sum_{w\,\in\,W_L}(w'w\chi)(b)|b|^{w'w(\lambda_0+\epsilon\mu)+\rho}[M(w'w,\lambda_0+\epsilon \mu,\chi)f](k,\lambda_0+\epsilon \mu)
\\
 = \ \ |b|^{w'\lambda_0 +\rho}(w'\chi)(b)
\sum_{w\,\in\,W_L}|b|^{\epsilon w'w  \mu}[M(w'w,\lambda_0+\epsilon \mu,\chi)f](k,\lambda_0+\epsilon \mu) \,.
\end{multline}
From this point onwards the flat section $f$ will be evaluated only at the fixed point $k$ (which could be taken to be the identity after right-translating $f$ anyhow), so we shall  drop the dependence on $k$ and $\lambda$ from the argument of $f$ and use the maps ${\mathbf{m}}(w,\lambda,\chi)$ from (\ref{smallmlamchimaps}).
  Thus the last sum over $w\in W_L$  is
\begin{equation}\label{dominnersum2}
\sum_{w\,\in\,W_L}|b|^{\epsilon w'w  \mu}\,\mathbf{m}(w'w,\lambda_0+\epsilon \mu,\chi)f \,,
\end{equation}
for example.

Recall the factorization (\ref{MRcidentity}) and that $R(w'w,\lambda,\chi)$ is holomorphic for $\lambda$ near the dominant weight $\lambda_0$ by part \ref{int:8} of Theorem~\ref{newintertwintheorem}.  Thus all the poles of $\mathbf{m}(w'w,\lambda_0+\epsilon \mu,\chi)$ at $\epsilon=0$ come from $c(w'w,\lambda_0+\epsilon \mu,\chi)$, which was defined as a product of functions  $c(s)$ and $c(s,\xi)$  in (\ref{cwdef}), where $\xi$ is a nontrivial Dirichlet character chosen from the set $\mathcal C$ (\ref{charset}).  Since $\lambda_0$ is an integral weight,   the values of $s$ here are all near  integers.
Motivated by the analytic characterization in Lemma~\ref{lem:Lfunctionproperties} of these functions at integers,  we introduce the following notation for $|s-n|<\frac 12$:
\begin{equation}\label{domctildes}
   \tilde{c}_n(s,\xi ) \ \ = \ \ c(n+s,\xi ) \ \  \ \ \ \text{and} \ \ \  \ \   \tilde{c}_n(s) \ \ = \ \ \left\{
                               \begin{array}{ll}
                                 c(n+s), & n\neq \pm 1, \\
                                 s\,c(1+s), & n=1,\\
                                  s^{-1}\,c(-1+s), & n=-1.
                               \end{array}
                             \right.
\end{equation}
where $n\in \Z$;
these functions are each  holomorphic and nonvanishing at $s=0$.
In analogy to (\ref{cwdef}), let
\begin{equation}\label{domcwtildes}
    \tilde{c}(w,\lambda_0+\epsilon \mu,\chi) \ \ = \ \ \prod_{ \alpha>0,w\alpha<0} \tilde{c}_{\langle \lambda_0,\alpha^\vee \rangle}(\epsilon \langle \mu,\alpha^\vee\rangle,\chi\circ \alpha^\vee)
\end{equation}
so that
\begin{equation}\label{relationbetweencs}
    c(w,\lambda_0+\epsilon \mu,\chi) \ \ = \ \ \epsilon^{o_{w}}
\left[
\prod_{\alpha\,\in\,{\mathcal S}(w)} \langle \mu,\alpha^\vee \rangle^{-1}\right]
 \tilde{c}(w,\lambda_0+\epsilon \mu,\chi)\,,
\end{equation}
where
\begin{equation}\label{domcalSdef}
    {\mathcal S}(w) \ \ = \ \ \{\alpha >0 \ | \  w\alpha<0\,, \ \langle \delta_0,\alpha^\vee\rangle \in 2\Z\,, \ \text{~and~}\ \langle \lambda_0,\alpha^\vee\rangle =1\}\,,
\end{equation}
and $-o_{w} = \,\#{\mathcal S}(w)$ is the order of the pole of $c(w,\lambda_0+\epsilon\mu,\chi)$ at $\epsilon=0$ (it comes from factors in (\ref{domcwtildes}) with $\chi\circ\alpha^\vee$ trivial and $\langle \lambda_0,\alpha^\vee\rangle=1)$.  For $\mu$ in general position (such as $\mu=\rho$), the inner products $\langle \mu,\alpha^\vee\rangle$ in (\ref{relationbetweencs}) are nonvanishing.

Set
  \begin{equation}\label{dommtildedef}
    \tilde{\mathbf{m}}(w,\lambda,\chi)  \ \ := \ \  \tilde{c}(w,\lambda,\chi)\,\mathbf{r}(w,\lambda,\chi)
  \end{equation}
  and write  $\tilde{\mathbf{m}}(w_\beta,\lambda,\chi)$ as $\tilde{\mathbf{m}}(w_\beta,\langle \lambda,\beta^\vee\rangle,\chi)=\tilde{c}(w_\beta,\lambda,\chi)\,\mathbf{r}(w_\beta,\langle \lambda,\beta^\vee\rangle,\chi)$ when $\beta\in \Sigma$ (extending the convention for $\mathbf{r}(w_\beta,\lambda,\chi)$ introduced after (\ref{factorsthroughcorootall})).
We then deduce from (\ref{relationbetweencs}), followed by an application of (\ref{domcwtildes}) and (\ref{rvfactorintosimple}), that
\begin{multline}\label{dommtildesonf}
    \mathbf{m}(w',\lambda_0+\epsilon\mu,\chi) \ \ = \ \ \epsilon^{o_{w'}}
\left[
\prod_{\alpha\,\in\,{\mathcal S}(w')} \langle \mu,\alpha^\vee \rangle^{-1}
\right] \tilde{\mathbf{m}}(w',\lambda_0+\epsilon\mu,\chi) \\
\!\!\!\!\!\!\!\!\!\!\!\!\!\!\!\!\!\!\!\!\!\!\!\!\!\!\!\!\!\!\!\!\!\!\!\!\!\!\!\!\!\!\!\!\!  = \ \
\epsilon^{o_{w'}}
\left[
\prod_{\alpha\,\in\,{\mathcal S}(w')} \langle \mu,\alpha^\vee \rangle^{-1}
\right] \tilde{\mathbf{m}}(w_{\beta_1},\langle \lambda_0,\gamma_1^\vee \rangle+\epsilon \langle \mu,\gamma_1^\vee\rangle,w_{\beta_2}\cdots w_{\beta_\ell}\chi)\circ \\
\tilde{\mathbf{m}}(w_{\beta_2},\langle \lambda_0,\gamma_2^\vee \rangle+\epsilon \langle \mu,\gamma_2^\vee\rangle,w_{\beta_3}\cdots w_{\beta_\ell}\chi)\circ \cdots \circ
\tilde{\mathbf{m}}(w_{\beta_\ell},\langle \lambda_0,\gamma_\ell^\vee \rangle+\epsilon \langle \mu,\gamma_\ell^\vee\rangle,\chi)\,,
\end{multline}
where $w'\in W^L$ is written as a reduced word $w_{\beta_1}\cdots w_{\beta_\ell}$ of length $\ell=\ell(w')$, $\beta_i\in\Sigma$, and
\begin{equation}\label{domgammaidef}
    \gamma_i^\vee \ \ = \ \ w_{\beta_\ell}\cdots w_{\beta_{i+1}}\beta_i^\vee \ \, , \ \ \ \ 1\,\le\,i\,\le\,\ell\,,
\end{equation}
are the positive coroots whose sign is flipped by $w'$.

We now introduce the   notation
\begin{equation}\label{domderivnotation}
    \tilde{\mathbf{m}}^{(n)}_{\nu}(w,\lambda,\chi) \ \ = \ \ \left.\frac{1}{n!}\,\frac{\partial^n}{\partial \epsilon^n}\right|_{\epsilon=0}  \tilde{\mathbf{m}}(w,\lambda+\epsilon\nu,\chi)\,, \ \ \nu\,\in\,{\mathfrak{a}}_\C^*\,,
\end{equation}
for the Taylor coefficients of intertwining operators (keeping in mind that $ \tilde{\mathbf{m}}(w,\lambda,\chi)$ is a meromorphic  matrix-valued function on any $K$-finite subspace of $\Ind_{B(\A) \cap K}^K \chi$).
In the special case that $w=w_{\beta}$ is the simple Weyl reflection associated to $\beta\in \Sigma$  and $\varpi_\beta$ denotes the corresponding fundamental weight,
\begin{equation}\label{domsimplederivs}
     \tilde{\mathbf{m}}^{(n)}_{\nu}(w_\beta,\lambda,\chi) \ \ = \ \ \langle \nu,\beta^\vee\rangle^n\,  \tilde{\mathbf{m}}^{(n)}_{\varpi_\beta}(w_\beta,\lambda,\chi)\,,
\end{equation}
since $\tilde{\mathbf{m}}(w_\beta,\lambda,\chi)$ depends only on $\langle\lambda,\beta^\vee\rangle$
(cf.~the comments just after (\ref{dommtildedef})).  It is clear from (\ref{relationbetweencs}) and (\ref{dommtildedef}) that  $\mathbf{m}(w,\lambda_0+\epsilon \mu,\chi)$ and  $\tilde{\mathbf{m}}(w,\lambda_0+\epsilon \mu,\chi)$ coincide when $\mathcal{S}(w)$ is empty, as is the case when $w\in W_L$ (this is because such $w$ can only flip the signs of roots in the Levi component generated by $\Sigma_L$, all of which are orthogonal to $\lambda_0$ by (\ref{SigmaLdef})).
Thus for $w\in W_L$ we may write
\begin{equation}\label{domorder+++2}
 \mathbf{m}(w,\lambda_0+\epsilon \mu,\chi)f \  = \ \sum_{n\,\ge\,0}\epsilon^n \, \tilde{\mathbf{m}}^{(n)}_{\mu}(w,\lambda_0,\chi)f\,, \ \ \tilde{\mathbf{m}}^{(0)}_{\mu}(w,\lambda_0,\chi) \  =  \ (-1)^{\ell(w)},
\end{equation}
the latter equation following from (\ref{WsubLdef})-(\ref{SigmaLdef}) and part~\ref{int:13} of Theorem~\ref{newintertwintheorem} (cf.~also the beginning of its proof).

We now insert  (\ref{dommtildesonf}) and  (\ref{domorder+++2}) into (\ref{dominnersum2}) to obtain
\begin{multline}\label{domorder+++4}
\sum_{w\,\in\,W_L}|b|^{\epsilon w'w  \mu}\,\tilde{\mathbf{m}}(w'w,\lambda_0+\epsilon \mu,\chi)f \ \ = \ \
\sum_{w\,\in\,W_L}|b|^{\epsilon w'w  \mu}\,\tilde{\mathbf{m}}(w',\lambda_0+\epsilon w\mu,\chi)\,\tilde{\mathbf{m}}(w,\lambda_0+\epsilon \mu,\chi)f \\
 = \ \
\sum_{w\,\in\,W_L}|b|^{\epsilon w'w  \mu}\,
\epsilon^{o_{w'}}\left[
\prod_{\alpha\,\in\,{\mathcal S}(w')} \langle w\mu,\alpha^\vee \rangle^{-1}
\right]       \tilde{\mathbf{m}}(w_{\beta_1},\langle \lambda_0,\gamma_1^\vee \rangle+\epsilon \langle w\mu,\gamma_1^\vee\rangle ,w_{\beta_2}\cdots w_{\beta_\ell}\chi)\\\circ \cdots \circ
\tilde{\mathbf{m}}(w_{\beta_\ell},\langle \lambda_0,\gamma_\ell^\vee \rangle+\epsilon \langle w\mu,\gamma_\ell^\vee\rangle ,\chi)
\sum_{n\,\ge\,0}\epsilon^n\,\tilde{\mathbf{m}}^{(n)}_{\mu}(w,\lambda_0,\chi)f
\\
=
\sum_{w\,\in\,W_L} \left[
\prod_{\alpha\,\in\,{\mathcal S}(w')} \langle w\mu,\alpha^\vee \rangle^{-1}
\right]
\sum_{n,n_0,\ldots,n_\ell\ge 0} \epsilon^{o_{w'}+n+n_0+\cdots+n_\ell} \frac{\langle w'w \mu,\log|b|\rangle^{n_0}}{n_0!}
\langle w\mu,\gamma_1^\vee\rangle^{n_1}\cdots\langle w\mu,\gamma_\ell^\vee\rangle^{n_\ell} \\
 \tilde{\mathbf{m}}^{(n_1)}_{\varpi_{\beta_1}}
(w_{\beta_1},\langle \lambda_0,\gamma_1^\vee \rangle  ,w_{\beta_2}\cdots w_{\beta_\ell}\chi)\circ \cdots \circ
\tilde{\mathbf{m}}^{(n_\ell)}_{\varpi_{\beta_\ell}} (w_{\beta_\ell},\langle \lambda_0,\gamma_\ell^\vee \rangle ,\chi)\circ \tilde{\mathbf{m}}^{(n)}_{\mu}(w,\lambda_0,\chi)f
\end{multline}
for $w'\in W^L$, where in the last step we have invoked (\ref{logbdef})  and  inserted the power series expansion of each intertwining operator using (\ref{domderivnotation})-(\ref{domsimplederivs}).

At this point we  use the observation (as evidenced by Table~\ref{fig:biglistofcases}) that all our remaining cases have $W_L\cong (\Z/2\Z)^{\#\Sigma_L}$, $\#\Sigma_L \le 3$, defined in (\ref{WsubLdef})
is generated by the commuting simple reflections $\{w_{\alpha}|\alpha\in \Sigma_L\}$.  Thus elements of $W_L$ are in bijective correspondence with subsets $S_L$ of $\Sigma_L$, each of which defines an element $w_{S_L}$ defined by the commuting product $\prod_{\beta\in S_L}w_{\beta}$.
An application of (\ref{WsubLdef}) and formula (\ref{factorsthroughcorootall}) then shows that $\tilde{\mathbf{m}}(w_\beta,w_{S_L}(\lambda_0+\epsilon \mu),w_{S_L}\chi)=\tilde{\mathbf{m}}(w_\beta, \lambda_0+\epsilon \mu, \chi)$  if $\beta\in \Sigma_L-S_L$, since then $ \langle w_{S_L}\mu,\beta^\vee\rangle=\langle \mu,w_{S_L}\beta^\vee\rangle=\langle \mu,\beta^\vee\rangle$ and $w_L$ fixes both $\lambda_0$ and $\chi$.  Using the analog of (\ref{langlandslemma}) for $\tilde{\mathbf{m}}$ as in (\ref{dommtildesonf}), we obtain
\begin{equation}\label{domorder+++5}
    \tilde{\mathbf{m}}\left(\textstyle{\prod}_{\beta\,\in\, S_L}w_\beta,\lambda_0+\epsilon\mu,\chi\right) \ \ = \ \
 \prod_{\beta\,\in\, S_L}  \tilde{\mathbf{m}}(w_\beta,\lambda_0+\epsilon\mu,\chi) \,,
\end{equation}
where the products on both sides are commutative.
 Summing over nonnegative integers $n_\beta$ parameterized by the elements $\beta\in \Sigma_L$, it follows that
\begin{equation}\label{domorder+++6}
   \tilde{ \mathbf{m}}^{(n)}_{\mu}(w_{S_L},\lambda_0,\chi) \ \  =  \ \ \sum_{\srel{\srel{n_\beta\,\ge \,0 }{\beta\notin S_L\Longrightarrow n_\beta =0}}{\sum_{\beta\in \Sigma_L}n_\beta = n}} \prod_{\beta\,\in\,S_L} \tilde{\mathbf{m}}^{(n_\beta)}_{\mu}(w_\beta,\lambda_0,\chi)
\end{equation}
 for any subset $S_L\subset \Sigma_L$.
Inserting into (\ref{domorder+++4}), we see that (\ref{dominnersum2}) equals
\begin{multline}\label{domorder+++7}
\sum_{\srel{\srel{n_0,\ldots,n_\ell\ge 0}{n_\beta\,\ge\,0}}{ \{\beta\in\Sigma_L|n_\beta>0\} \, \subseteq \, S_L \, \subseteq \, \Sigma_L}}
 \epsilon^{o_{w'}+n_0+\cdots
+n_\ell+\sum_{\beta\in\Sigma_L}n_\beta}
   \left[
\prod_{\alpha\,\in\,{\mathcal S}(w')} \langle w_{S_L}\mu,\alpha^\vee \rangle^{-1}
\right] \\ \times \frac{\langle w'w_{S_L} \mu,\log|b|\rangle^{n_0}}{n_0!}
\left[\prod_{j=1}^\ell \langle w_{S_L}\mu,\gamma_j^\vee\rangle^{n_j}
\right]
 \tilde{\mathbf{m}}^{(n_1)}_{\varpi_{\beta_1}} (w_{\beta_1},\langle \lambda_0,\gamma_1^\vee \rangle   ,w_{\beta_2}\cdots w_{\beta_\ell}\chi)\\\circ \cdots \circ
\tilde{\mathbf{m}}^{(n_\ell)}_{\varpi_{\beta_\ell}} (w_{\beta_\ell},\langle \lambda_0,\gamma_\ell^\vee \rangle  ,\chi)\circ\prod_{\beta\,\in\,S_L} \tilde{\mathbf{m}}^{(n_\beta)}_{\mu}(w_\beta,\lambda_0,\chi)f.
\end{multline}
This is the Laurent expansion of (\ref{dominnersum2}) at $\epsilon=0$.

Consider the contribution of terms in (\ref{domorder+++7}) for fixed values of  $n_j\ge 0$  and $n_\beta\ge 0$, where $0\le j\le \ell$ and $\beta\in\Sigma_L$, but varying choices of $S_L$.  The minimal choice of $S_L$ is $\{\beta\in \Sigma_L|n_\beta>0\}$, but larger choices of $S_L$ will involve the operators   $\tilde{\mathbf{m}}^{(0)}(w_\beta,\lambda_0,\chi)$, which act by $-1$ according to the second formula in (\ref{domorder+++2}).  However, all the other  intertwining operators that appear in (\ref{domorder+++7}) are  independent of $S_L$.

 This last observation   gives a combinatorial way to verify (\ref{ourcriteria2}).
 If
\begin{multline}\label{domvanishcondition}
\sum_{ \{\beta\in\Sigma_L|n_\beta>0\} \, \subseteq \, S_L \, \subseteq \, \Sigma_L}(-1)^{\#S_L}
 \left[
\prod_{\alpha\,\in\,{\mathcal S}(w')} \langle w_{S_L}\mu,\alpha^\vee \rangle^{-1}
\right] \langle w'w_{S_L} \mu,\log|b|\rangle^{n_0} \\ \times \
\langle w_{S_L}\mu,\gamma_1^\vee\rangle^{n_1}\cdots\langle w_{S_L}\mu,\gamma_\ell^\vee\rangle^{n_\ell}
 \ \ = \ \ 0
\end{multline}
for all choices of $n_j\ge 0$ and $n_\beta\ge 0$ (where $0\le j \le \ell$ and $\beta\in \Sigma_L$)  satisfying the constraint
\begin{equation}\label{constraint}
  n_0\, + \, \cdots \, + \, n_\ell \, + \, \sum_{\beta\,\in\,\Sigma_L}n_\beta \ \ \le \  k_{\text{bd}}\,,
\end{equation}
 then (\ref{dominnersum2})=(\ref{domorder+++7}) vanishes to order at least $o_{w'}+k_{\text{bd}}+1$.   In terms of (\ref{dominnersum1}), this shows that $C^\sharp(w',o_{w'}+k_{\text{bd}},f,\cdot)\equiv 0$ for all $f$.
On the other hand, suppose that (\ref{domvanishcondition}) does not vanish in the special case in which $n_0=k_{\text{bd}}+1$, but all other $n_j$ and $n_\beta=0$ -- that is,
\begin{equation}\label{domnonvanishcondition}
    \sum_{   S_L \, \subseteq \, \Sigma_L}(-1)^{\#S_L}
 \left[
\prod_{\alpha\,\in\,{\mathcal S}(w')} \langle w_{S_L}\mu,\alpha^\vee \rangle^{-1}
\right] \langle w'w_{S_L} \mu,\log|b|\rangle^{k_{\text{bd}}+1} \ \ \neq \ \ 0
\end{equation}
for some $b\in B(\A)$.
Again using $\tilde{\mathbf{m}}_\mu^{(0)}(w_\beta,\lambda_0,\chi)f=-f$ from (\ref{domorder+++2}), the contribution
of terms for these vanishing $n_j$ and $n_\beta$ in
(\ref{domorder+++7}) is a nonzero multiple of (\ref{domnonvanishcondition}) times $\epsilon^{o_{w'}+k_{\text{bd}}+1}\tilde{\mathbf{m}}(w',\lambda_0,\chi)f$.  According to part \ref{int:14} of Theorem~\ref{newintertwintheorem}, there is some choice of $f$ such that $\mathbf{r}(w',\lambda_0,\chi)f\neq 0$; for such $f$, $\tilde{\mathbf{m}}(w',\lambda_0,\chi)f$ is nonzero as well since the two intertwining operators are related in (\ref{dommtildedef}) by the nonvanishing factor $\tilde{c}(w',\lambda_0,\chi)$ (see (\ref{domctildes})-(\ref{domcwtildes})).  We conclude that (\ref{domnonvanishcondition}) implies that
(\ref{dominnersum2}) vanishes to order precisely $o_{w'}+k_{\text{bd}}+1$ and  that $C^\sharp(w',o_{w'}+k_{\text{bd}}+1,f,\cdot)\not\equiv 0$, because any other choice of $n_j$ and $n_\beta$ with $n_0+\cdots+n_\ell+\sum_{\beta\in\Sigma_L}n_\beta=k_{\text{bd}}+1$ has a strictly smaller value of  $n_0$ (the power $\langle w'w_{S_L}\mu,\log|b|\rangle$), and hence cannot cancel out (\ref{domnonvanishcondition}).  This proves Theorem~\ref{thm:unitarizablequotient}.

The vanishing condition (\ref{domvanishcondition}) is a statement about multilinear forms.  Using the notation $v^{\otimes k}$ for the $k$-fold tensor product of a vector with itself (so that $v^{\otimes 0}=1$, $v^{\otimes 1}=v$, and $v^{\otimes 2}=vv^t$, which are the only cases used in the final calculation), we observe that
\begin{equation}\label{domvanishallnew}
   \sum_{ \{\beta\in\Sigma_L|n_\beta>0\} \, \subseteq \, S_L \, \subseteq \, \Sigma_L}(-1)^{\#S_L}
 \left[
\prod_{\alpha\,\in\,{\mathcal S}(w')} \langle w_{S_L}\mu,\alpha^\vee \rangle^{-1}
\right] (w_{S_L} \mu)^{\otimes k} \ \ = \ \ 0\,,
\end{equation}
for all $0\le k\le k_{\text{bd}}-\sum_{\beta\in \Sigma_L}n_\beta$, implies the vanishing of (\ref{domvanishcondition}) for all $n_j$ and $n_\beta$ satisfying (\ref{constraint}).  Condition (\ref{domnonvanishcondition}) can be similarly rephrased, but it is simpler to computationally verify it for $\log|b|=\rho$.

The  proof now reduces to a fairly   simple calculation to verify (\ref{domnonvanishcondition}) and (\ref{domvanishallnew}), which is easily implemented in standard packages such as Mathematica or LiE.  We took the deformation direction to be  $\mu=\rho$.     As Table~\ref{fig:detailstable} indicates, our verification required taking $k_{\text{bd}}\le\#\Sigma_L-1$ and $m\le -r-\#\Sigma_L$, where we recall that $r$ is the rank of $G$ and that $W_L$ is isomorphic to $(\Z/2\Z)^{\#\Sigma_L}$.  In particular, when $W_L$ is trivial the computation simplifies tremendously because there is no possible cancellation between terms in (\ref{dominnersum2}).
This is why those entries are marked ``NA'', as are the four entries for which   unitarity was separately handled by {\tt atlas}.

\begin{table}[h]
$$\begin{array}{|c|c||c|c|c|}
\hline
\text{$(G,\frak{g^\vee_\C}^\sigma)$}&
\text{Orbit}&
\text{$W_L$ Type} &
m & k_{\text{bd}}\le
\\
\hline
(G_2,\ \f{sl}_2\times \f{sl}_2)&G_2(a_1) &1 &  -2   & NA  \\
\hline
(F_4,\ \f{sp}_6 \times \f{sl}_2) &F_4(a_3)&A_1 & -5& 0  \\
(F_4,\ \f{sp}_6 \times \f{sl}_2 )&F_4(a_2)&1 & -4& NA  \\
(F_4,\ \f{so}_9) &F_4(a_3)&A_1\times A_1 & -6& 1  \\
(F_4,\ \f{so}_9) &F_4(a_1)&1 & -4 & NA  \\
\hline
(E_6^{sc}, \  \f{sl}_6 \times \f{sl}_2) & E_6(a_3)& 1 & -6& NA  \\
\hline
(E_7^{sc},\ \f{so}_{12} \times \f{sl}_2) & E_7(a_5)&A_1\times A_1 & -9 & 1  \\
(E_7^{sc},\ \f{so}_{12} \times \f{sl}_2) & E_7(a_4)&A_1 & -8& 0  \\
(E_7^{sc},\ \f{so}_{12} \times \f{sl}_2) & E_7(a_3)&1 & -7& NA  \\
(E_7^{sc},\ \f{sl}_8)  & E_6(a_1) & A_1 & NA & NA \\
\hline
(E_8,\ \f{e}_7\times \f{sl}_2)& D_5+A_2& A_2\times A_1\times A_1 & NA  & NA    \\
(E_8,\ \f{e}_7\times \f{sl}_2)& D_7(a_1)   &A_1\times A_1\times A_1& -11  & 2   \\
(E_8,\ \f{e}_7\times \f{sl}_2)& E_8(a_7)&A_2\times A_2\times A_1 & NA& NA  \\
(E_8,\ \f{e}_7\times \f{sl}_2)& E_8(b_5)&A_1\times A_1 & -10& 1  \\
(E_8,\ \f{e}_7\times \f{sl}_2)& E_8(b_4)& A_1 & -9& 0  \\
(E_8,\ \f{e}_7\times \f{sl}_2)& E_8(a_3)&1 & -8& NA  \\
(E_8,\ \f{so}_{16})& E_8(a_7)&A_2\times A_1\times A_1\times A_1 & NA& NA  \\
(E_8,\ \f{so}_{16})& E_8(b_6)&A_1\times A_1\times A_1 &  -11& 2  \\
(E_8,\ \f{so}_{16})& E_8(a_6)&A_1\times A_1 & -10 & 1  \\
(E_8,\ \f{so}_{16})& E_8(a_5)&A_1 & -9& 0  \\
(E_8,\ \f{so}_{16})& E_8(a_4)&1 & -8& NA  \\
\hline
\end{array} $$
\caption{$W_L$ along with the values  of  $m$ and largest necessary $k_{\text{bd}}$ for each case listed in Table~\ref{fig:biglistofcases}.  The present argument applies to the  cases in which the $W_L$ type is a product of $A_1$ factors. \label{fig:detailstable}}
\end{table}

\noindent {\bf Remarks:}

1) It is tempting to follow the strategy of \cite{M} and use the induction data $(\lambda_1,\delta_1)$ from Table~\ref{fig:biglistofcases}, with which one might expect that the vast majority of terms in (\ref{limconstterm}) might vanish identically (with a suitably-chosen deformation direction $\mu$).  This is because among its Weyl orbit, $\lambda_1$ has the maximal number of inner products with simple coroots equal to -1, a point at which the $c$-function vanishes.  Such vanishing was a crucial tool in \cite{M}, as it immediately eliminated most $w\in W$ from the decisive  verification of (\ref{ourcriteria1}).  However, applying this argument in our context faces an obstacle not present there (where only the spherical vector was relevant):~if one writes  $f=M(w_0,\lambda_0,\chi_0)f'$, with $f'$ a flat section and $w_0\lambda_0=\lambda_1$, it appears difficult to simultaneously deform both $f$ and $f'$ in a way which allows us to leverage the  composition formula (\ref{langlandslemma}).

2) In \cite{opdam}, an approach was introduced to relate the constant-term combinatorics to affine Hecke-algebras, in particular explaining the cancelation found in \cite{M}.  It would be intriguing to understand whether (\ref{domvanishcondition}) can also be understood by techniques similar to those used in \cite[Lemma~2.17]{opdam}.


\begin{thebibliography}{AAAAA}


\bibitem[ABV]{ABV} J. Adams, D. Barbasch, and D. Vogan, {\it The Langlands Classication and the Irreducibile Characters for Real Reductive groups}, Progress in Mathematics {\bf 104}, Birkhauser, 1992.

\bibitem[ALTV]{The5} J. Adams, M.~van Leeuwen, P. Trapa, and D.A. Vogan Jr., {\it Unitary representations of real reductive groups},   arXiv:1212.2192 (2012).



\bibitem[A1]{Arthur-conjectures1}  J. Arthur, {\it On some problems suggested by the trace formula}, in
Lie Group Representations II, Lecture Notes in Math., {\bf 1041}, Springer-Verlag, 1983, 1--49.

\bibitem[A2]{Arthur-conjectures2}  J. Arthur, {\it Unipotent automorphic representations:~conjectures},
Ast\'erisque  {\bf 171-172} (1989), 13--71.

\bibitem[A3]{Arthur-IntOpI}  J. Arthur, {\it
Intertwining operators and residues. I. Weighted characters},
J. Funct. Anal. {\bf 84} (1989), no. 1, 19--84.

\bibitem[A4]{Arthur-book} J. Arthur, {\it The Endoscopic Classification of Representations Orthogonal and Symplectic Groups}.   American Mathematical Soc. Colloquium Publications {\bf 61}, 2013.

\bibitem[atlas]{atlas} Atlas of Lie Groups and Representations, \url{http://www.liegroups.org/}

\bibitem[BC]{BC} D. Barbasch and D. Ciubotaru, {\it Unitary equivalences for reductive $p$-adic groups}, American Journal of Mathematics {\bf 135} (2013), 1633--1674.

\bibitem[BM]{BM} D. Barbasch and A. Moy, {\it Reduction to real infinitesimal character in affine Hecke algebras}, J. Amer. Math. Soc. {\bf 6}
(1993), no. 3, 611--635.

\bibitem[BB]{BB} A. Beilinson and J. Bernstein,
{\it Localisation de $\frak g$-modules}, C. R. Acad. Sci. Paris {\bf 292}
(1981), 15--18.


\bibitem[Cass1]{Casselman}
W.~Casselman, {\it Introduction to the theory of admissible representations
of $p$-adic reductive groups}, Available at {\tt http://www.math.ubc.ca/$\sim$cass/research.html}


\bibitem[Cass2]{Casselman-smooth}
W.~Casselman, {\it Canonical extensions of Harish-Chandra modules to representations of $G$}, Can. J. Math {\bf 16} (1989), 385--438.


\bibitem[CW]{collingwood} D.~Collingwood and W.~McGovern, {\it Nilpotent Orbits in Semisimple Lie Algebras}, Van Nostrand Reinhold, New York 1993.

\bibitem[D]{davenport} H. Davenport, Harold, {\it Multiplicative Number Theory} (3rd ed.), Graduate Texts in Mathematics {\bf 74}, Springer-Verlag, 2000.



\bibitem[DHO]{opdam}  M.~De Martino, V.~Heiermann, and E.~Opdam, {\it On the unramified spherical automorphic spectrum}, arxiv:1512.08566.

\bibitem[SGA3]{SGA3}
M. Demazure, A. Grothendieck, {\it Sch\'emas en groupes III}, Lecture Notes
in Math {\bf 153}, Springer-Verlag, New York, 1970.


\bibitem[GMV]{GMV} M.B.~Green, S.D.~Miller, and P.~Vanhove, {\it Small representations, string instantons,
and Fourier modes Of Eisenstein series, with appendix ``Special unipotent representations'' by Dan Ciubotaru and Peter E. Trapa},
Journal of Number Theory {\bf 146} (2015), pp. 187--309.


\bibitem[I-M]{Iwahori-Matsumoto}
N. Iwahori, N., H. Matsumoto, {\it On some Bruhat decomposition and the structure of the Hecke rings of $p$-adic Chevalley groups}, Inst. Hautes \'Etudes Sci. Publ. Math. No. {\bf 25} (1965), 5--48.



\bibitem[K1]{Kim-G2}
H.H.~Kim, {\it The residual spectrum of $G_2$},
   Can. J. Math. {\bf 48} (1996), pp. 1245-1272.



\bibitem[K2]{Kim-borel} H. Kim, {\it Residual spectrum of split classical groups: contribution from Borel subgroups}, Pacific J. Math. {\bf 199} (2001), no. 2, 417--445.

\bibitem[KS]{Kim-Shahidi} H. Kim and F. Shahidi, {\it Quadratic unipotent Arthur parameters and residual spectrum of symplectic groups}, Amer. J. Math. {\bf 118} (1996), 401--425.

\bibitem[Kn]{Knapp} A.W.~Knapp, {\it Representation theory of semisimple groups.
An overview based on examples}. Princeton Mathematical Series, {\bf 36}. Princeton University Press, Princeton, NJ, 1986.


\bibitem[L]{L} R.P.~Langlands, {\it On the functional equations satisfied by Eisenstein series}, Springer Verlag Lecture Notes in Mathematics, {\bf 544} (1976).

\bibitem[Mi]{M} S.D. Miller,  {\it Residual automorphic forms and spherical unitary representations of exceptional groups}, Ann. of Math. (2) {\bf 177} (2013), no. 3, 1169--1179.

\bibitem[Mo]{Moeglin} C. M{\oe}glin, {\it Repr\'esentations unipotentes et formes automorphes de carr\'e
   int\'egrable}, Forum Math. {\bf 6} (1994), pp. 651--744.

\bibitem[M-W]{MW} C. M{\oe}glin and J.-L. Waldspurger, {\it Spectral decomposition and Eisenstein series. Une paraphrase de l'\'Ecriture}, Cambridge Tracts in Mathematics,  {\bf 113}. Cambridge University Press, Cambridge, 1995.

\bibitem[N]{Ngo} B.C.~Ngo, {\it Le lemme fondamental pour les algèbres de Lie}, Publications Mathématiques de l'IH\'ES {\bf 111}  (2010), pp. 1--169.


\bibitem[R]{Reeder} M. Reeder, {\it Torsion automorphisms of simple Lie algebras}, L'Enseignement Math\'ematique {\bf 56} (2010), 3--47.

\bibitem[Sch]{Schiffmann}
G. Schiffmann,
 {\it
Int\'egrales d'entrelacement et fonctions de Whittaker}, Bull. Soc. Math. France {\bf 99} (1971), 3--72.

\bibitem[Sh1]{Shahidi} F. Shahidi, {\it Whittaker models for real groups}, Duke Math. J., {\bf 47} (1980), 99-125.

\bibitem[Sh2]{Shahidi81} F. Shahidi, {\it On certain $L$-functions}, Amer. J. Math., {\bf 103} (1981), 297--355.

\bibitem[Sh3]{Shahidi-local} F. Shahidi, {\it Local
coefficients and normalization of intertwining operators for $GL(n$)}, Comp. Math. {\bf 48}
(1983), 271--295.

\bibitem[Si]{Silberger}  A. Silberger, {\it Introduction to harmonic analysis on reductive $p$-adic groups. Based on lectures by Harish-Chandra at the Institute for Advanced Study, 1971--1973.} Mathematical Notes, {\bf 23}. Princeton University Press, Princeton, N.J.; University of Tokyo Press, Tokyo, 1979.

\bibitem[Spr]{Springer} T.A. Springer, {\it Linear Algebraic Groups, 2nd Ed.,}
Birkh\"auser, 1998.




\bibitem[St]{Steinberg}  R. Steinberg, {\it Lectures on Chevalley Groups},
Yale Univ. Lecture Notes, 1967.

\bibitem[V]{Vogan} D. Vogan, {\it The unitary dual of $G_2$}, Invent. math. {\bf 116} (1994), 667-791.

\bibitem[VW]{vogan-wallach} D. Vogan and N. Wallach, {\it Intertwining operators for real reductive groups}, Adv. in Math. {\bf 82} (1990), 203--243.

\bibitem[Wa]{Wallach} N. Wallach, {\it Real Reductive Groups II}, Academic Press,  Pure and Applied Mathematics, vol. {\bf 132-II}, 1992.

\bibitem[Wi]{Winarsky} N. Winarsky, {\it Reducibility of principal series representations of $p$-adic Chevalley groups}, Amer. J. Math.  {\bf 100} (1978), 941--956.

\bibitem[Z]{Yuanli} Y. Zhang, {\it The holomorphy and nonvanishing of
normalized
local intertwining operators}, Pacific J. of Math. {\bf 180} (1997), 385--398.

\end{thebibliography}
\end{document}